\numberwithin{equation}{section}
\newtheorem{Theorem}{Theorem}[section]
\newtheorem{Proposition}[Theorem]{Proposition}
\newtheorem{Lemma}[Theorem]{Lemma}
\newtheorem{Corollary}[Theorem]{Corollary}
\theoremstyle{definition}
\newtheorem{Definition}[Theorem]{Definition}
\newtheorem{Remark}[Theorem]{Remark}
\newcommand{\db}{\overline\partial}
\newcommand{\ov}{\overline}
\newcommand{\wi}{\widetilde}
\DeclareMathOperator{\codim}{codim}
\DeclareMathOperator{\Tr}{Tr}
\DeclareMathOperator{\rank}{rank}
\DeclareMathOperator{\Aut}{Aut}
\DeclareMathOperator{\End}{End}
\DeclareMathOperator{\Id}{Id}
\newcommand{\cali}[1]{\mathscr{#1}}
\newcommand{\cO}{\cali{O}}
\newcommand{\cT}{\cali{T}}
\newcommand{\cC}{\cali{C}}
\newcommand{\field}[1]{\mathbb{#1}}
\newcommand{\R}{\field{R}}
\newcommand{\C}{\field{C}}
\newcommand{\N}{\field{N}}
\renewcommand{\P}{\field{P}}
\newcommand{\E}{\field{E}}
\newcommand{\G}{\field{G}}
\newcommand{\FS}{\mathrm{FS}}
\newcommand{\comment}[1]{}
\begin{document}

\title{Tian's theorem for Grassmannian embeddings and degeneracy sets of random sections}

\author{Turgay Bayraktar} 
\thanks{T.\ Bayraktar is partially supported by T\"{U}B\.{I}TAK \& German DAAD Collaboration Grant ARDEB-2531/121N191}
\address{Faculty of Engineering and Natural Sciences, 
Sabanc{\i} University, \.{I}stanbul, Turkey}
\email{tbayraktar@sabanciuniv.edu}

\author{Dan Coman}
\thanks{D.\ Coman is supported by the NSF Grant DMS-2154273}
\address{Department of Mathematics, Syracuse University, 
Syracuse, NY 13244-1150, USA}
\email{dcoman@syr.edu}

\author{Bingxiao Liu}
\address{Univerisit\"at zu K\"oln, Mathematisches institut,
Weyertal 86-90, 50931 K\"oln, Germany} 
\email{bingxiao.liu@uni-koeln.de}
\thanks{B.\ Liu is supported by DFG Priority Program 2265 
`Random Geometric Systems' (Project-ID 422743078)}

\author{George Marinescu}
\address{Univerisit\"at zu K\"oln, Mathematisches institut,
Weyertal 86-90, 50931 K\"oln, Germany 
\newline\mbox{\quad}\,Institute of Mathematics `Simion Stoilow', 
Romanian Academy, Bucharest, Romania}
\email{gmarines@math.uni-koeln.de}
\thanks{G.\ Marinescu is partially supported 
by the DFG funded projects SFB TRR 191 `Symplectic Structures in Geometry, 
Algebra and Dynamics' (Project-ID 281071066\,--\,TRR 191),
DFG Priority Program 2265 `Random Geometric Systems' 
(Project-ID 422743078), the ANR-DFG project `QuasiDy\,--\,Quantization, Singularities, 
and Holomorphic Dynamics' (Project-ID 490843120)}

\subjclass[2020]{Primary 32L10; Secondary 32A60, 
32U40, 53C55, 60D05.} 
\keywords{Bergman kernel, holomorphic vector bundle, 
random holomorphic section}

\date{May 25, 2025}

\begin{abstract}
Let $(X,\omega)$ be a compact K\"ahler manifold, 
$(L,h^L)$ be a positive line bundle, and $(E,h^E)$ 
be a Hermitian holomorphic vector bundle of rank $r$ on $X$. 
We prove that the pullback by the Kodaira embedding associated 
to $L^p\otimes E$ of the $k$-th Chern class of the dual of the universal 
bundle over the Grassmannian converges as $p\to\infty$ 
to the $k$-th power of the Chern form $c_1(L,h^L)$, for $0\leq k\leq r$.
If $c_1(L,h^L)=\omega$ we also determine the second term
in the semiclassical expansion, which involves $c_1(E,h^E)$.
As a consequence we show that the limit distribution of zeros of random
sequences of holomorphic sections of high powers $L^p\otimes E$
is $c_1(L,h^L)^r$. Furthermore, we compute the expectation of the currents 
of integration along degeneracy sets of random holomorphic sections. 
\end{abstract}

\maketitle

\tableofcontents


\section{Introduction}\label{S:Intro}

Let $(X,\omega)$ be a compact K\"ahler manifold of dimension $n$, $(L,h^L)$
be a positive line bundle on $X$, and $(E,h^E)$ be a Hermitian holomorphic vector
bundle of rank $r\leq n$ on $X$. For $p\geq1$, we set 
\[L^p:=L^{\otimes p},\;V_p:=H^0(X,L^p\otimes E),\;d_p:=\dim V_p-1.\]
We let $h^{L^p}$ and $h^{L^p\otimes E}$ be the Hermitian metrics induced by 
$h^L$ and $h^E$ on $L^p$, respectively on $L^p\otimes E$. We endow $V_p$
with the $L^2$-inner product induced by this metric data, namely
\begin{equation}\label{e:ip1}
(S,S')_p=\int_X\langle S,S'\rangle_{h^{L^p\otimes E}}\,\frac{\omega^n}{n!}\,,\,\;S,S'\in V_p.
\end{equation}
We endow its dual space $V_p^\star=H^{0}(X, L^{p}\otimes E)^\star$
with the inner product induced by the one on $V_p$. 

Let $\G(r,V_p^\star)$ be the Grassmannian of $r$-dimensional subspaces of $V_p^\star$. The Kodaira 
map is defined by
\begin{equation}\label{e:Kod1}
\Phi_{p}:X\to\G(d_p+1-r,V_p)=\G(r,V_p^\star)\,,\,\;\Phi_{p}(x)=\{s\in V_p:\,s(x)=0\}.
\end{equation}
Since $L$ is positive, the above map is a well-defined 
holomorphic embedding for all $p$ sufficiently large 
(see \cite[Theorem 5.1.18]{MM07}).

Let $\cT$ be the universal holomorphic vector bundle over $\G(r,V_p^\star)$ 
and $h^\cT$ be the Hermitian metric on it induced by the inner product on $V_p^\star$. 
Let $(\cT^\star,h^{\cT^\star})$ be the dual holomorphic vector bundle. 
We denote by $c_k(\cT^\star,h^{\cT^\star})$ the $k$-th Chern form
of $(\cT^\star,h^{\cT^\star})$.
Our first main result is a generalization of Tian's theorem for Chern forms of any degree.

\begin{Theorem}\label{T:Tian}
Let $(X,\omega)$ be a compact K\"ahler manifold of dimension $n$, 
$(L,h^L)$ be a positive line bundle on $X$, and $(E,h^E)$ be 
a Hermitian holomorphic vector bundle on $X$ of rank $r\leq n$. 
Then for every $0\leq k\leq r$ we have in the $\cC^\infty$ topology as $p\to\infty$,
\begin{equation}\label{e:Tian}
\frac{1}{p^k}\,\Phi_p^\star(c_k(\cT^\star,h^{\cT^\star}))=
\binom rk c_1(L,h^L)^k+O\left(\frac1p\right).
\end{equation}
Assume further that the bundle $(L,h^L)$ polarizes $(X,\omega)$,
that is $\omega=c_1(L,h)$. Then for every $0\leq k\leq r$ we have
in the $\cC^\infty$ topology as $p\to\infty$,
\begin{equation}
\frac{1}{p^k}\,\Phi_p^\star(c_k(\cT^\star,h^{\cT^\star}))=
\binom{r}{k}\omega^{k}
+\frac1p\binom{r-1}{k-1}c_1(E,h^E)\wedge \omega^{k-1}
+O\left(\frac{1}{p^2}\right).
\label{e:Tian1}
\end{equation}
\end{Theorem}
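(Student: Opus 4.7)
The plan is to reduce Theorem~\ref{T:Tian} to Ma--Marinescu's $\cC^\infty$ on-diagonal expansion of the generalized Bergman kernel of $L^p\otimes E$. The first step is to identify the pulled-back bundle and metric. The evaluation sequence $0\to\Phi_p(x)\to V_p\xrightarrow{\mathrm{ev}_x}(L^p\otimes E)_x\to 0$ exhibits the surjection $V_p\otimes\cO_X\twoheadrightarrow L^p\otimes E$ as the $\Phi_p$-pullback of the universal quotient $V_p\otimes\cO_\G\twoheadrightarrow\cT^\star$ on $\G(r,V_p^\star)$, which yields a canonical holomorphic isomorphism $\Phi_p^\star\cT^\star\cong L^p\otimes E$. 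Under this isomorphism, $\Phi_p^\star h^{\cT^\star}$ becomes the quotient Hermitian metric on $L^p\otimes E$ coming from the $L^2$ inner product on $V_p$, which a short computation with an orthonormal basis of $V_p$ rewrites as
$$\Phi_p^\star h^{\cT^\star}(\xi,\eta)=\bigl\langle B_p(x)^{-1}\xi,\eta\bigr\rangle_{h^{L^p\otimes E}},\qquad \xi,\eta\in(L^p\otimes E)_x,$$
where $B_p(x)=\mathrm{ev}_x\,\mathrm{ev}_x^{\star}\in\End((L^p\otimes E)_x)$ is the on-diagonal Bergman endomorphism.

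Next, the splitting principle, together with the fact that tensoring by $L^p$ shifts Chern roots by $p\,c_1(L,h^L)$, yields the pointwise identity of smooth forms
$$c_k(L^p\otimes E,h^{L^p\otimes E})=\sum_{j=0}^k\binom{r-j}{k-j}\,p^{k-j}\,c_1(L,h^L)^{k-j}\wedge c_j(E,h^E),$$
whose $j=0$ and $j=1$ contributions, after division by $p^k$, already realize the main terms on the right-hand sides of (\ref{e:Tian}) and (\ref{e:Tian1}). The theorem therefore reduces to showing that the Bott--Chern difference
$$\Delta_{k,p}:=c_k\bigl(L^p\otimes E,\Phi_p^\star h^{\cT^\star}\bigr)-c_k\bigl(L^p\otimes E,h^{L^p\otimes E}\bigr)=O(p^{k-2})$$
in every $\cC^\infty$-seminorm, which is stronger than what is needed for (\ref{e:Tian}) and matches exactly the error in (\ref{e:Tian1}).

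To establish this estimate, I would set $\tilde B_p:=p^{-n}B_p$ and invoke Ma--Marinescu's $\cC^\infty$ expansion $\tilde B_p(x)=\Id_E+p^{-1}b_1(x)+p^{-2}b_2(x)+\cdots$, in which, in the polarized case $\omega=c_1(L,h^L)$, the coefficient $b_1\in\cC^\infty(X,\End E)$ is an explicit combination of the scalar curvature of $(X,\omega)$ and the curvature of $(E,h^E)$. Since multiplying a Hermitian metric by a positive constant does not change the Chern connection, one may replace $\Phi_p^\star h^{\cT^\star}$ by $h^{L^p\otimes E}(\tilde B_p^{-1}\cdot,\cdot)$, which is an $O(1/p)$ perturbation of $h^{L^p\otimes E}$. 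The curvature of this perturbed metric differs from that of $h^{L^p\otimes E}$ by a term of size $O(1/p)$ built out of $\tilde B_p$ and its $\partial,\bar\partial$-derivatives. Plugging into the invariant polynomial defining $c_k$ and using that the curvature of $h^{L^p\otimes E}$ contributes at most $O(p)$, coming from the factor $p\,c_1(L,h^L)\otimes\Id_E$, a direct degree count in $p$ yields $\Delta_{k,p}=O(p^{k-2})$.

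The main obstacle is the matrix-valued nature of $B_p$: the coefficients $b_j$ take values in $\End E$ and do not in general commute with the curvatures of $L$ and $E$, so the Chern--Weil expansion of $c_k$ for the perturbed metric produces commutator terms that must be tracked carefully. For (\ref{e:Tian1}) in particular, one has to confirm that the nontrivial scalar-curvature piece of $b_1$ (which is proportional to $\Id_E$) contributes to $\Delta_{k,p}$ only at order $p^{k-2}$, so that the $p^{-1}$-term of (\ref{e:Tian1}) retains exactly $\binom{r-1}{k-1}\,c_1(E,h^E)\wedge\omega^{k-1}$, in agreement with the $j=1$ term of the splitting-principle identity above.
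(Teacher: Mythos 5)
Your route is the same as the paper's: the identification of $\Phi_p^\star h^{\cT^\star}$ with $h^{L^p\otimes E}(P_p(x,x)^{-1}\cdot,\cdot)$ is \eqref{e:isostarP}, the splitting identity for $c_k(L^p\otimes E,h^{L^p\otimes E})$ is \eqref{eq:3.2.4bis} (Lemma \ref{L:ChernLp}), and the comparison of the two Chern forms via the Bergman kernel expansion is the content of Theorems \ref{T:Tian2} and \ref{T:Tian3}. The genuine flaw is the size you claim for the comparison term: you assert $\Delta_{k,p}=O(p^{k-2})$ \emph{unconditionally}, on the basis of the expansion $p^{-n}B_p(x)=\Id_E+p^{-1}b_1(x)+\cdots$. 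But the $L^2$ inner product \eqref{e:ip1} is taken with respect to $\omega^n/n!$, and in Theorem \ref{bkt2.1} the leading Bergman coefficient is $\boldsymbol{b}_0=\det\bigl(\dot{R}^L/(2\pi)\bigr)\Id_E$, see \eqref{abk2.5}; this is a non-constant positive function unless the polarization hypothesis $\omega=c_1(L,h^L)$ (or at least $\boldsymbol{b}_0\equiv1$) holds. So in general $p^{-n}B_p=\boldsymbol{b}_0+O(1/p)$ is \emph{not} an $O(1/p)$ perturbation of $\Id_E$, and the curvature of the pulled-back metric differs from $R^{L^p\otimes E}$ (after a conjugation that is irrelevant for Chern forms) by an $O(1)$ term involving $\partial\bar\partial\log\boldsymbol{b}_0$. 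Consequently $\Delta_{k,p}$ is in general only $O(p^{k-1})$ --- exactly the bound of Theorem \ref{T:Tian2} --- which still suffices for \eqref{e:Tian}; the stronger bound $O(p^{k-2})$, and with it \eqref{e:Tian1}, hold precisely under $\omega=c_1(L,h^L)$, where $\boldsymbol{b}_0\equiv1$ (Theorem \ref{T:Tian3}). You must therefore split the argument into these two cases rather than claim the stronger estimate in general.

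Once this is repaired, the remaining points you flag resolve as in the paper. Since $c_k$ is an invariant polynomial, conjugating the curvature by the matrix of $P_p(x,x)^{-1}$ in a local frame does not change the Chern forms, so the noncommutativity of $b_1$ with $R^L$ and $R^E$ is harmless. Moreover, any monomial in the expansion of $c_k$ that contains at least one factor of the $O(1/p)$ perturbation and at most $k-1$ factors of $R^{L^p\otimes E}=pR^L\otimes\Id_E+R^E=O(p)$ is $O(p^{k-2})$; hence in the polarized case the entire $p^{k-1}$-coefficient in \eqref{e:Tian1} comes from the $j=1$ term of the splitting identity, namely $\binom{r-1}{k-1}c_1(E,h^E)\wedge\omega^{k-1}$, which confirms the point you left as needing verification.
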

The semi-classical asymptotics \eqref{e:Tian} in the
$\cC^\infty$ topology means that 
for every $\ell\in\N$ there exists $C_\ell>0$ such that 
for $p$ sufficiently large we have
\begin{equation}\label{e:Tian-march}
\left\|\frac{1}{p^k}\,\Phi_p^\star(c_k(\cT^\star,h^{\cT^\star}))-
\binom rk c_1(L,h^L)^k\right\|_{\cC^\ell(X)}\leq \frac{C_\ell}{p}\,,
\end{equation}
and similarly for \eqref{e:Tian1}.

Tian's theorem \cite{Ti90} (the case $r=1$ and $\ell=2$, 
see also \cite{MM07,Ca99,Ru98,Z98} for the $\cC^\infty$ topology)
follows from Theorem \ref{T:Tian} 
by taking $E$ to be the trivial line bundle $X\times\C$. 
The case $k=1$ and $E$ arbitrary appears in \cite[Theorem 5.1.17]{MM07}
and is used to prove the Kodaira embedding theorem
\cite[Theorem 5.1.18]{MM07}. In recent years, Tian's theorem was generalized to 
the case of singular Hermitian holomorphic line bundles, and moreover to the setting 
of compact normal analytic spaces $X$ (see 
\cite{CM15,CM13,CMM17,CMN16,CMN18,CMN24}).
 
Theorem \ref{T:Tian} establishes that for an arbitrary positive line bundle $(L,h^L)$, 
we can determine the first-order approximation of the pullback of the 
$k$-th Chern form in the semi-classical limit. 
This approximation is up to a factor the $k$-th power of the Chern curvature $c_1(L,h^L)$.
Furthermore, if $c_1(L,h^L)=\omega$, we compute the second-order 
approximation of the pullback of the $k$-th Chern form.
In the semi-classical limit, we recover the first Chern form 
$c_1(E,h^E)$ of $(E,h^E)$.

An important application of Tian's theorem \cite{Ti90} is to the study of the asymptotic distribution 
of zeros of random sequences of sections of powers of a line bundle. This started with the work of 
Shiffman and Zelditch \cite{ShZ99}. See \cite{BCM,Sh08,B6,DMS12,DMM16,DMN17,DLM25} for interesting 
results in this direction. One motivation for this study comes from the particular case of random 
polynomials and the distributions of their zeros, which has a long history. We refer the reader 
to the papers \cite{B9,BL13,BCHM} and the references therein. 

As in the case of line bundles, Theorem \ref{T:Tian} 
can be applied to prove the equidistribution of zeros of 
random sections in $V_p$. The setting is as follows. 
For the generic section $s\in V_p$ its zero set $Z_s=\{s=0\}$ 
is a complex submanifold of $X$ of codimension $r$ 
(see Section \ref{S:MTG}). We denote by $[s=0]$ 
the current of integration over $Z_s$. We endow the 
projective space $\P V_p$ with the probability measure 
$\Upsilon_p=\omega_\FS^{d_p}\,$, where $\omega_\FS$ 
denotes the Fubini-Study form on $\P V_p$. 
We then consider the product probability space 
\begin{equation}\label{e:prob}
(\mathcal H,\Upsilon)=
\Big(\prod_{p=1}^\infty\P V_p\,,\prod_{p=1}^\infty\Upsilon_p\Big).
\end{equation}
Our second main result describes the distribution of zeros of 
random sequences of sections of $L^p\otimes E$ and 
gives an estimate on the speed of convergence.
An important ingredient of the proof is to consider the Kodaira map
as a meromorphic transform in the sense of Dinh-Sibony \cite{DS06}.
\begin{Theorem}\label{T:zeros}
Let $(X,\omega)$ be a compact K\"ahler manifold of dimension $n$, 
$(L,h^L)$ be a positive line bundle on $X$, and $(E,h^E)$ 
be a Hermitian holomorphic vector bundle of rank $r\leq n$ on $X$. 
Then there exist $C>0$ and $p_0\in\N$ such that the following holds: 

For any $\gamma>1$ there exist subsets 
$\mathcal{E}_p=\mathcal{E}_p(\gamma)\subset\P V_p$ such that for $p>p_0$ we have 

(i) $\Upsilon_p(\mathcal{E}_p)\leq Cp^{-\gamma}$;

(ii) if $s_p\in\P V_p\setminus \mathcal{E}_p$ then
\begin{equation}\label{e:espeed}
\Big|\Big\langle\frac{1}{p^r}\,[s_p=0]-c_1(L,h^L)^r,\varphi\Big\rangle\Big|
\leq C\gamma\,\frac{\log p}{p}\,\|\varphi\|_{\cC^2(X)},
\end{equation}
for any $(n-r,n-r)$ form $\varphi$ of class $\cC^2$ on $X$.
Moreover, the estimate \eqref{e:espeed} holds for $\Upsilon$-a.e.\ sequence 
$\{s_p\}_{p\geq1}\in\mathcal H$ provided
that $p$ is large enough. Hence $p^{-r}\,[s_p=0]\to c_1(L,h^L)^r$ in the weak 
sense of currents as $p\to\infty$, almost surely.
\end{Theorem}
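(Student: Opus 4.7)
The plan is to combine Theorem \ref{T:Tian} (with $k = r$) with the Dinh-Sibony theory of meromorphic transforms \cite{DS06}. First, I would realize the expected zero current as a pullback by $\Phi_p$ of a universal object on the Grassmannian. Each $s \in V_p$, viewed as a linear form on $V_p^\star$, restricts on the fibers of $\cT$ to a holomorphic section $\tilde s$ of $\cT^\star$ whose zero locus is the Schubert subvariety $\Sigma_s = \{W \in \G(r, V_p^\star) : s|_W = 0\}$ of codimension $r$. The evaluation map yields a canonical isomorphism $\Phi_p^\star \cT^\star \cong L^p \otimes E$ under which $\tilde s$ pulls back to $s$, so $[s = 0] = \Phi_p^\star [\Sigma_s]$. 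Because $\Upsilon_p$ and $h^{\cT^\star}$ arise from the same $L^2$ inner product on $V_p^\star$, the family $\{[\Sigma_s]\}_{s \in \P V_p}$ is $U(V_p^\star)$-equivariant and its $\Upsilon_p$-mean is the unique $U(V_p^\star)$-invariant smooth representative of the class $c_r(\cT^\star)$. Hence
$$
\mathbb{E}_{\Upsilon_p}[s_p = 0] = \Phi_p^\star c_r(\cT^\star, h^{\cT^\star}),
$$
and \eqref{e:Tian} with $k = r$ gives $p^{-r}\mathbb{E}_{\Upsilon_p}[s_p = 0] = c_1(L, h^L)^r + O(1/p)$ in $\cC^\infty$.

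For the deviation estimates (i)--(ii), I would view $\Phi_p$ as a meromorphic transform $\P V_p \dashrightarrow X$ via the incidence correspondence $\{(s, x) \in \P V_p \times X : s(x) = 0\}$. For a test form $\varphi \in \cC^2(X)$ of bidegree $(n-r, n-r)$ set
$$
u_\varphi(s) := \Big\langle [s = 0] - \Phi_p^\star c_r(\cT^\star, h^{\cT^\star}),\, \varphi\Big\rangle.
$$
By the mean computation above, $\int u_\varphi \, d\Upsilon_p = 0$, and $u_\varphi$ is (up to a smooth correction) quasi-p.s.h.\ on $\P V_p$ with logarithmic singularities along the discriminant locus. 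Standard sublevel-set estimates in the Dinh-Sibony framework (exploiting the $O(1/p)$ mean speed and the polynomial growth $\dim V_p \sim p^n$) then yield
$$
\Upsilon_p\big\{s \in \P V_p : |u_\varphi(s)| > \lambda \|\varphi\|_{\cC^2}\big\} \leq C \exp(-c \lambda p / \log p)
$$
for constants $c, C > 0$ independent of $p$. Taking $\lambda = C' \gamma (\log p)/p$ produces $\mathcal E_p(\gamma)$ with $\Upsilon_p(\mathcal E_p) \leq C p^{-\gamma}$ and \eqref{e:espeed} on its complement. Since $\sum_p p^{-\gamma} < \infty$ for $\gamma > 1$, Borel-Cantelli yields the almost sure convergence.

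The main obstacle is justifying the concentration step rigorously when $r \geq 2$. For $r = 1$ the Poincar\'e-Lelong formula gives $[s = 0] = dd^c \log\|s\| + c_1(L^p, h^{L^p})$, so $u_\varphi$ is an explicit quasi-p.s.h.\ function on $\P V_p$ and the sublevel estimates follow from classical pluripotential theory. For $r \geq 2$ the current $[s = 0]$ has bidegree $(r, r)$ and is not a single $dd^c$; one must instead analyze the Schubert stratification of $\G(r, V_p^\star)$ and control volumes of tubular neighborhoods of the degenerate locus of $\tilde s$ uniformly in $p$, while keeping track of the dependence on $\varphi$. The crucial ingredients are the polynomial growth $\dim V_p \sim p^n$ together with the $O(1/p)$ speed of Theorem \ref{T:Tian}, which through the Dinh-Sibony machinery translate into the claimed polynomial concentration at rate $p^{-\gamma}$ with only logarithmic loss.
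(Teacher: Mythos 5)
Your overall architecture matches the paper's: interpret the Kodaira map as a meromorphic transform, identify $F_p^\star(\Upsilon_p)=\Phi_p^\star\big(c_r(\cT^\star,h^{\cT^\star})\big)$ (your invariance argument on the Grassmannian is essentially the paper's Theorem \ref{T:expG}/Theorem \ref{T:MTGdeg}), use Theorem \ref{T:Tian} only to pass from $p^{-r}\Phi_p^\star c_r(\cT^\star,h^{\cT^\star})$ to $c_1(L,h^L)^r$, and finish with Borel--Cantelli. But the heart of the theorem --- the deviation estimate producing $\mathcal E_p(\gamma)$ with $\Upsilon_p(\mathcal E_p)\leq Cp^{-\gamma}$ and the $\gamma\log p/p$ speed --- is exactly the step you leave open: you invoke ``standard sublevel-set estimates'' and then concede that for $r\geq 2$ you do not know how to justify them, proposing instead an analysis of the Schubert stratification and tubular-neighborhood volumes. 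That is a genuine gap, and it is also not the route that works. The Dinh--Sibony machinery already covers meromorphic transforms of arbitrary codimension: their Lemma 4.2(d) gives $\Upsilon_p(\mathcal E'_p(\varepsilon))\leq\Delta(\P V_p,t_{\varepsilon,p})$ with $t_{\varepsilon,p}=\varepsilon\,d(F_p)/\delta(F_p)-3R(\P V_p)$, so no Poincar\'e--Lelong representation of $[s=0]$, no quasi-psh function $u_\varphi$ on $\P V_p$, and no Schubert-stratification volume control are needed. What \emph{is} needed --- and what is missing from your proposal --- is the computation of the two intermediate degrees $d(F_p)$ (order $d_p$) and $\delta(F_p)$ (order $d_p-1$). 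The paper gets these from Theorem \ref{T:MTGdeg}, i.e.\ from Poincar\'e duality of degeneracy loci: $\lambda_k(F_p)=\int_X c_{k-d_p+r}(L^p\otimes E)\wedge\omega^{\,\cdot}$, whence $d(F_p)\asymp p^r$, $\delta(F_p)\asymp p^{r-1}$ and $d(F_p)/\delta(F_p)\gtrsim p$ by Lemma \ref{L:ChernLp}. Note this ratio, together with $R(\P^{d_p})=O(\log p)$ and $\Delta(\P^{d_p},t)\leq c\,p^n e^{-\alpha t}$ (from $d_p\sim p^n$), is the true source of the concentration; the $O(1/p)$ rate in Theorem \ref{T:Tian} plays no role in it, contrary to what your last paragraph suggests.

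There is also a quantitative slip in the concentration you posit: a tail of the form $C\exp(-c\lambda p/\log p)$ with the choice $\lambda=C'\gamma(\log p)/p$ yields only the constant bound $Ce^{-cC'\gamma}$, not $Cp^{-\gamma}$. The bound one actually obtains from the Dinh--Sibony estimate is of the form $Cp^{n+O(1)}\exp(-c\varepsilon p)$ for the normalized deviation $\varepsilon$, and then $\varepsilon_{p,\gamma}\asymp\gamma(\log p)/p$ does give $p^{-\gamma}$ and the speed \eqref{e:espeed}. Finally, to apply the meromorphic-transform formalism at all you must check that $F_p$ is dominant in both directions for large $p$, i.e.\ that generic sections of $L^p\otimes E$ have nonempty zero sets (the paper cites Kleiman) and that the evaluation maps are surjective; your proposal does not address this hypothesis (B)--(C) verification.
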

By taking the Hermitian vector bundle $(E,h^E)$
to be the trivial vector bundle $X\times\C^r$ endowed
with the trivial metric we recover the results by 
Shiffman-Zelditch \cite{ShZ99} and Dinh-Sibony \cite{DS06} that
the normalized simultaneous zero currents 
$p^{-r}\,[s_{p,1}=\ldots=s_{p,r}=0]$
of $r$ independent random sections of $H^0(X,L^p)$ almost surely
converge weakly to $c_1(L,h^L)^r$, for
$1\leq r\leq n$. We refer to Section \ref{S:tuples} for
more details. 

We turn now our attention to the expectation of the currents 
of integration along degeneracy sets of random holomorphic sections. 
This topic is closely related to the theorems stated above. 
We consider the following general setting:  

\medskip

(A) $(X,\omega)$ is a compact K\"ahler manifold of dimension $n$, $(E,h^E)$
is a Hermitian holomorphic vector bundle of rank $r$ on $X$, and
\[1\leq r\leq n,\;V:=H^0(X,E),\;N:=\dim V-1,\;\P V=\P H^0(X,E).\]

\smallskip

(B) For every section $S\in V$, $Z_S:=\{x\in X:\,S(x)=0\}\neq\varnothing$.

\smallskip

(C) $N\geq r$ and for every $x\in X$, $V$ spans the fiber $E_x$ of $E$ over $x$.

\medskip

Note that the set $Z_s=Z_S$ is well-defined for $s\in\P V$, 
as it is independent of the chosen representative $S\in V\setminus\{0\}$ of $s$. 
We endow $V$ with the $L^2$-inner product determined by $h^E$ and the volume
form $\omega^n/n!$ on $X$, 
\begin{equation}\label{e:ip2}
(S,S')=\int_X\langle S,S'\rangle_{h^E}\,\frac{\omega^n}{n!}\,,\,\;S,S'\in V.
\end{equation}
We consider the dual $V^\star$ with the inner product determined
by the one on $V$, and we let $\omega_\FS$ be the induced Fubini-Study
form on the $N$-dimensional projective space $\P V$. 

Let $\G(k,V)$ denote the Grassmannian of (complex) $k$-planes in $V$. 
By (C), the Kodaira map 
\begin{equation}\label{e:Kod2}
\Phi_E:X\to\G(N+1-r,V)=\G(r,V^\star),\;\Phi_E(x)=\{s\in V:\,s(x)=0\},
\end{equation}
is well-defined and holomorphic. Let $\cT\to \mathbb{G}(r,V^\star)$ 
be the universal bundle endowed with the Hermitian metric $h^{\cT}$ 
induced by the inner product on $V^\star$, and let 
$(\cT^\star,h^{\cT^\star})$ be the dual holomorphic vector bundle.

Let $1\leq k\leq r$. We will denote by $\Lambda^kE$ the 
$k$-th exterior power of $E$. In particular $\Lambda^rE$
is the determinant bundle of $E$. If $S_1,\ldots,S_k\in V$ then 
$S_1\wedge\ldots\wedge S_k\in H^0(X,\Lambda^kE)$. 
Given $(s_1,\ldots,s_k)\in(\P V)^k$ (or $(s_1,\ldots,s_k)\in V^k$), 
the degeneracy set 
\begin{equation}\label{e:degset}
D_k(s_1,\ldots,s_k)=\{x\in X:\,(s_1\wedge\ldots\wedge s_k)(x)=0\}
\end{equation}
is the set of points $x$ where $s_1(x),\ldots,s_k(x)$ are linearly dependent 
(see \cite[p.\ 411]{GH94}). 

Let $\mu_k$ be a probability measure on $(\P V)^k$ and  
$\E_k(\mu_k)=\E([D_k(s_1,\ldots,s_k)],\mu_k)$ 
be the expectation current of the current valued random variable 
\[(\P V)^k\ni(s_1,\ldots,s_k)\longmapsto[D_k(s_1,\ldots,s_k)],\] 
defined by 
\begin{equation}\label{e:expk}
\langle \E_k(\mu_k),\phi \rangle=
\int_{(\P V)^k}\langle[D_k(s_1,\ldots,s_k)],\phi\rangle\,d\mu_k\,,
\end{equation}
where $\phi$ is a smooth $(n+k-r-1,n+k-r-1)$ form on $X$. 
By using the fact that the Poincar\'e dual
of $D_k(s_1,\ldots,s_k)$ is $c_{r+1-k}(E)$ 
we find the cohomology class of the expectation 
current for general probability measures.
\begin{Theorem}\label{T:expcoh}
Let $(X,\omega),\,(E,h^E)$ verify (A)-(C).
For $1\leq k\leq r$, let $\mu_k$ be a probability measure 
on $(\P V)^k$ such that $\mu_k(A)=0$ for any proper analytic subset 
$A\subset(\P V)^k$. Then $\E_k(\mu_k)$ is a well-defined 
positive closed current of bidegree $(r+1-k,r+1-k)$ on $X$ 
and it belongs to the Chern class $c_{r+1-k}(E)$.
\end{Theorem}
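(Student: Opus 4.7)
The plan is to verify that for $\mu_k$-almost every tuple $(s_1,\ldots,s_k)\in(\P V)^k$ the current of integration $[D_k(s_1,\ldots,s_k)]$ is a well-defined positive closed current of bidegree $(r+1-k,r+1-k)$ representing the Chern class $c_{r+1-k}(E)$; once this pointwise statement is in hand, the assertions about $\E_k(\mu_k)$ follow from a routine Fubini-type argument using a uniform cohomological mass bound.

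To prove the pointwise statement, I would introduce the incidence variety
\begin{equation*}
\Sigma=\bigl\{((s_1,\ldots,s_k),x)\in(\P V)^k\times X:(s_1\wedge\cdots\wedge s_k)(x)=0\bigr\}.
\end{equation*}
By the spanning hypothesis (C), the evaluation map $V\to E_x$ is surjective at every $x\in X$, so the fiber $\Sigma_x\subset(\P V)^k$, cut out by the condition that $s_1(x),\ldots,s_k(x)$ be linearly dependent in $E_x\cong\C^r$, has pure codimension $r+1-k$ (the standard determinantal codimension for an $r\times k$ matrix to drop rank below $k$). Hence $\Sigma$ is analytic of pure dimension $kN+n-(r+1-k)$, and upper semicontinuity of fiber dimension applied to the first projection $\pi_1\colon\Sigma\to(\P V)^k$ shows that
\begin{equation*}
A_0=\bigl\{(s_1,\ldots,s_k)\in(\P V)^k:\dim D_k(s_1,\ldots,s_k)>n-(r+1-k)\bigr\}
\end{equation*}
is a proper analytic subset of $(\P V)^k$, and by hypothesis $\mu_k(A_0)=0$.

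For $(s_1,\ldots,s_k)\notin A_0$, the bundle morphism $\sigma\colon\cO_X^k\to E$ sending the standard basis to $(s_1,\ldots,s_k)$ has its rank-$(k-1)$ degeneracy locus of the expected codimension $r+1-k$, and the classical Thom--Porteous formula identifies its cohomology class with $c_{r+1-k}(E)$ (see e.g.\ Fulton, \emph{Intersection Theory}, Thm.~14.4). Consequently $[D_k(s_1,\ldots,s_k)]$ is a well-defined positive closed current of bidegree $(r+1-k,r+1-k)$ in the class $c_{r+1-k}(E)$, and its $\omega$-mass $c_{r+1-k}(E)\cdot[\omega]^{n-r-1+k}=:M$ is a finite constant independent of the tuple.

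Given this uniform mass bound, $\E_k(\mu_k)$ is well-defined: for any smooth test form $\phi$ of bidegree $(n+k-r-1,n+k-r-1)$, the integrand in \eqref{e:expk} is $\mu_k$-a.e.\ dominated by $C_\phi M$, so the integral converges absolutely; positivity is immediate since $\E_k(\mu_k)$ is an average of positive currents against a probability measure. Closedness follows by taking $\phi=d\psi$: each individual pairing vanishes $\mu_k$-a.e., hence so does the integral. Finally, pairing $\E_k(\mu_k)$ against any closed smooth test form $\phi$ yields $\int_{(\P V)^k}\bigl(c_{r+1-k}(E)\cdot[\phi]\bigr)\,d\mu_k=c_{r+1-k}(E)\cdot[\phi]$, confirming that $\E_k(\mu_k)$ lies in $c_{r+1-k}(E)$. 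The main obstacle is the geometric two-step argument of the middle paragraphs—using (C) together with upper semicontinuity to isolate the bad set $A_0$, and then invoking Thom--Porteous to pin down the class on its complement—while the expectation computation itself is routine.
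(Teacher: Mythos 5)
Your overall architecture -- restrict to a full-measure set of good tuples, identify the class of $[D_k(s_1,\ldots,s_k)]$ pointwise, then integrate -- is the same as the paper's, and your ingredients are genuinely different (incidence variety plus upper semicontinuity of fiber dimension to isolate the bad set, Thom--Porteous for the class, instead of the paper's realization $[s_1\wedge\ldots\wedge s_k=0]=F_E^\star([H(s)])$ as a pullback under the meromorphic transform of Section \ref{S:MTG} together with the Poincar\'e duality statement of \cite[p.\ 413]{GH94}). However, there is a genuine gap in the middle step. Excluding only the set $A_0$ where the codimension drops is not enough to conclude that the \emph{current of integration} over $D_k(s_1,\ldots,s_k)$ lies in $c_{r+1-k}(E)$: Thom--Porteous computes the class of the degeneracy \emph{cycle} with its scheme-theoretic multiplicities, and when the locus is non-reduced the reduced integration current represents a strictly smaller class. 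Already for $k=r=1$, $E=M^{\otimes 2}$ and $s=t^{\otimes 2}$ with $t\in H^0(X,M)$, the zero set has the expected codimension $1$ but $[Z_s]$ (as a reduced set) represents $c_1(M)=\tfrac12\,c_1(E)$. So you must also discard the tuples whose degeneracy locus carries multiplicities; this set is again contained in a proper analytic subset of $(\P V)^k$ (e.g.\ by Bertini for vector bundles \cite{MZ23}, or by enlarging your incidence argument), hence $\mu_k$-null, but this is exactly the genericity that the paper's ``for generic $s$'' and its appeal to \cite[p.\ 413]{GH94} encode, and it cannot be omitted. Relatedly, your claim that the $\omega$-mass \emph{equals} $M$ for every tuple outside $A_0$ should be ``at most $M$''; as an upper bound this still suffices for domination.

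The second omission is measurability: your ``routine Fubini-type argument'' presupposes that $s\mapsto\langle[D_k(s_1,\ldots,s_k)],\phi\rangle$ is a Borel ($\mu_k$-measurable) function, and domination only gives integrability once that is known. The paper treats this as a real step: it writes the current locally as $dd^c\log|g_1(s,\cdot)|\wedge\cdots\wedge dd^c\log|g_{r+1-k}(s,\cdot)|$ with $g_j$ determinantal minors depending holomorphically on $(s,x)$, and exhibits the pairing as a pointwise limit, for generic $s$, of the smooth functions obtained by replacing $\log|g_j|$ with $\tfrac12\log(|g_j|^2+\varepsilon)$. Some argument of this kind (or an explicit reference) must be supplied. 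With these two points repaired -- enlarge the exceptional set to include non-reduced degeneracy loci, and prove measurability -- your route does yield the theorem, and its first half (the incidence-variety dimension count) is a clean alternative to the paper's use of the second indeterminacy set $I_2(F_E)$.
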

In the case of Gaussians or Fubini-Study volumes we obtain 
a precise formula for the expectation currents.

\begin{Theorem}\label{T:expdeg}
Let $(X,\omega),\,(E,h^E)$ verify (A)-(C). 
Assume that the Kodaira map $\Phi_E:X\to\G(r,V^\star)$ 
defined in \eqref{e:Kod2} is an embedding. 
For $1\leq k\leq r$, let $\nu_k$ be the Gaussian probability measure 
on $V^k$ and $\mu_k$ be the product measure on $(\P V)^k$
determined by the Fubini-Study volume $\omega_\FS^N$ on $\P V$.
Then the expectation currents $\E_k(\mu_k), \E_k(\nu_k)$ are given by 
\[\E_k(\mu_k)=\E_k(\nu_k)=
\Phi_E^\star\big(c_{r+1-k}(\cT^\star,h^{\cT^\star})\big),\;1\leq k\leq r.\]
\end{Theorem}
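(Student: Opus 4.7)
The plan is to transfer the problem from $X$ to the Grassmannian $\G:=\G(r,V^\star)$ via the Kodaira embedding $\Phi_E$ and then exploit the unitary symmetry of $\G$. Each $s\in V$ yields a canonical global section $\hat s\in H^0(\G,\cT^\star)$, dual to the tautological inclusion $\cT\hookrightarrow\mO_\G\otimes V^\star$, characterized by $\hat s(W)=s|_W\in W^\star=\cT^\star|_W$. Under the identification $\Phi_E^\star\cT^\star=E$ (with the dual Fubini--Study metric on $E$ induced by $V$), one has $\Phi_E^\star\hat s=s$. Writing $\hat D_k(\hat s_1,\ldots,\hat s_k)\subset\G$ for the zero locus of $\hat s_1\wedge\ldots\wedge\hat s_k\in H^0(\G,\Lambda^k\cT^\star)$, this identification gives $D_k(s_1,\ldots,s_k)=\Phi_E^{-1}(\hat D_k(\hat s_1,\ldots,\hat s_k))$.

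For $(s_1,\ldots,s_k)$ outside a proper analytic (hence $\mu_k$- and $\nu_k$-null) subset of $V^k$, the subvariety $\hat D_k$ meets $\Phi_E(X)$ properly in codimension $r+1-k$, so by the standard pullback rule for currents of integration under an embedding, $[D_k(s_1,\ldots,s_k)]=\Phi_E^\star[\hat D_k(\hat s_1,\ldots,\hat s_k)]$. Pairing with smooth test forms on $X$ and interchanging integration with $\Phi_E^\star$ by Fubini, I would deduce
\[
\E_k(\mu_k)=\Phi_E^\star\bigl(\hat\E_k(\mu_k)\bigr),\qquad
\hat\E_k(\mu_k):=\int_{(\P V)^k}[\hat D_k(\hat s_1,\ldots,\hat s_k)]\,d\mu_k,
\]
and similarly for $\nu_k$. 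Since $[\hat D_k]$ depends only on the projective classes $[s_i]$, and the normalized Gaussian on $V$ pushes forward to the Fubini--Study probability measure on $\P V$, one obtains $\hat\E_k(\nu_k)=\hat\E_k(\mu_k)$.

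It remains to show $\hat\E_k(\mu_k)=c_{r+1-k}(\cT^\star,h^{\cT^\star})$ on $\G$. The unitary group $U(V)$ acts transitively on $\G$ by biholomorphic isometries, and $(\cT^\star,h^{\cT^\star})$, the measure $\mu_k$, and the construction $(s_i)\mapsto[\hat D_k]$ are all $U(V)$-equivariant. Thus $\hat\E_k(\mu_k)$ is a $U(V)$-invariant closed positive current on $\G$, which by Theorem \ref{T:expcoh} lies in the class $c_{r+1-k}(\cT^\star)$. Because $\G$ is a compact Hermitian symmetric space, an invariant positive current is automatically a smooth invariant form, and by Cartan's theorem each cohomology class on $\G$ has a unique invariant representative. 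Since $c_{r+1-k}(\cT^\star,h^{\cT^\star})$ is also invariant and lies in the same class, the two coincide, and pulling back by $\Phi_E$ finishes the proof. The main technical hurdle is the first step: justifying the pullback identity $[D_k(s)]=\Phi_E^\star[\hat D_k(\hat s)]$ and its interchange with expectation, which hinges on a Bertini-type generic transversality of $\hat D_k$ with $\Phi_E(X)$ supplied by hypotheses (B)--(C); the subsequent Grassmannian symmetry argument is essentially formal once smoothness from invariance is granted.
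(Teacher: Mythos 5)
Your Grassmannian step is essentially the paper's Theorem \ref{T:expG}: invariance of the expectation under ${\rm U}(V)$, smoothness of the invariant current by averaging over the Haar measure, uniqueness of the invariant representative in each cohomology class on the symmetric space $\G(r,V^\star)$, and the class identification from Theorem \ref{T:expcoh}. That half is sound. The gap is in the transfer back to $X$.

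You write that from the pointwise identity $[D_k(s)]=\Phi_E^\star[\hat D_k(\hat s)]$ for generic $s$ one deduces $\E_k(\mu_k)=\Phi_E^\star\bigl(\hat\E_k(\mu_k)\bigr)$ ``by Fubini''. This is not a Fubini statement: the left side is an average of restrictions of the currents $[\hat D_k(\hat s)]$ to the fixed submanifold $\Phi_E(X)$, while the right side is the restriction of the average. Restriction (equivalently, wedging with $[\Phi_E(X)]=(dd^c v)^{m-n}$, $v=\log\|z'\|$ in local coordinates) is a limiting Monge--Amp\`ere--type operation, not an integral pairing in the parameter $s$, and ``slicing commutes with averaging'' is exactly the nontrivial analytic point. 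Bertini-type genericity gives you the pointwise identity for $\mu_k$-a.e.\ $s$ (this is \eqref{e:cod1} in the paper), but it does not give the interchange: to pass from the regularized potentials $v_\varepsilon=\tfrac12\log(\|z'\|^2+\varepsilon)$, for which the exchange of the $\varepsilon$-limit with $\int d\mu_k$ is what must be proved, one needs either a uniform domination (delicate, since CLN-type bounds degenerate as $v_\varepsilon\downarrow v$) or a different mechanism. The paper's proof is structured precisely to avoid claiming this exchange: Lemma \ref{L:push} records the pushforward identity, Lemma \ref{L:ineq} proves only the one-sided inequality (that $T=\Phi_E^\star\bigl(c_{r+1-k}(\cT^\star,h^{\cT^\star})\bigr)-\E_k(\mu_k)$ is positive) via local potentials, regularization and Fatou's lemma together with Theorem \ref{T:expG}, and then equality follows from a cohomological mass argument: $T$ is positive, closed, and null-cohomologous (both terms lie in $c_{r+1-k}(E)$, using Theorem \ref{T:expcoh} and the isomorphism $\Phi_E^\star\cT^\star\cong E$), hence $\int_X T\wedge\omega^{n+k-r-1}=0$ and $T=0$. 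To complete your argument you must either justify the interchange directly (supplying the missing dominated-convergence or capacity estimates) or replace it with this inequality-plus-cohomology scheme; as written, the crucial step is asserted rather than proved.
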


Combining Theorems \ref{T:Tian} and \ref{T:expdeg}, we obtain 
asymptotic equidistribution results for the degeneracy sets. 
\begin{Corollary}\label{cor:1.5}
Let $(X,\omega)$ be a compact K\"ahler manifold of dimension $n$, 
$(L,h^L)$ be a positive line bundle on $X$, and $(E,h^E)$ 
be a Hermitian holomorphic vector bundle of rank $r\leq n$ on $X$. For $p\in \mathbb{N}$ and $1\leq k\leq r$, let $\nu^p_k$ be the Gaussian probability measure 
on $V_p^k$ and $\mu^p_k$ be the product measure on $(\P V_p)^k$
determined by the Fubini-Study volume $\Upsilon_p$ on $\P V_p$.
Then we have as $p\to\infty$,
\begin{equation}
\label{eq:cor1.5}
    \frac{1}{p^{r+1-k}}\,\E_k(\mu^p_k)=\frac{1}{p^{r+1-k}}\,\E_k(\nu^p_k)=\binom{r}{k-1} c_1(L,h^L)^{r+1-k}+O\left(\frac1p\right).
\end{equation}
If we assume in addition that $\omega=c_1(L,h^L)$, then
\begin{equation}
\label{eq:cor1.5-2}
\begin{split}
        \frac{1}{p^{r+1-k}}\,\E_k(\mu^p_k)&=\frac{1}{p^{r+1-k}}\,\E_k(\nu^p_k)\\
        &=\binom{r}{k-1} \omega^{r+1-k}+\frac{1}{p}\binom{r-1}{r-k}c_1(E,h^E)\wedge \omega^{r-k}+O\left(\frac{1}{p^2}\right).
\end{split}
\end{equation}
\end{Corollary}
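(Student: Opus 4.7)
The strategy is a direct combination of Theorem~\ref{T:expdeg} with Theorem~\ref{T:Tian}. Applied with the role of $E$ played by $L^p\otimes E$ and with $V=V_p$, Theorem~\ref{T:expdeg} will identify both expectation currents $\E_k(\mu^p_k)$ and $\E_k(\nu^p_k)$ with the single deterministic pullback $\Phi_p^\star(c_{r+1-k}(\cT^\star,h^{\cT^\star}))$ of the $(r{+}1{-}k)$-th Chern form of the dual universal bundle over $\G(r,V_p^\star)$. Theorem~\ref{T:Tian}, applied with its Chern-class index $k$ replaced by $r+1-k$, will then deliver the asymptotic expansion of this pullback as $p\to\infty$.

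First, I verify that for $p$ sufficiently large the triple $(X,\omega,L^p\otimes E)$ satisfies (A)-(C). The rank condition in (A) is preserved. Positivity of $L$ implies that $L^p\otimes E$ is globally generated for $p$ large, so $V_p$ spans every fiber $(L^p\otimes E)_x$, and plainly $\dim V_p\geq r+1$; this gives (C). A nowhere-vanishing holomorphic section of $L^p\otimes E$ would split off a trivial line subbundle, forcing $c_r(L^p\otimes E)=0$; but $c_r(L^p\otimes E)=p^r c_1(L)^r+O(p^{r-1})$ is non-zero in cohomology for $p$ large by positivity of $c_1(L)$, so every non-trivial section of $L^p\otimes E$ has a non-empty zero set, giving (B). Finally, $\Phi_p$ is an embedding for $p$ large by \cite[Theorem~5.1.18]{MM07}. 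Theorem~\ref{T:expdeg} therefore applies and yields
\begin{equation*}
\E_k(\mu^p_k)=\E_k(\nu^p_k)=\Phi_p^\star\bigl(c_{r+1-k}(\cT^\star,h^{\cT^\star})\bigr),\quad 1\leq k\leq r.
\end{equation*}

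Next, I invoke Theorem~\ref{T:Tian} with its index $k$ replaced by $r+1-k$; since $1\leq k\leq r$ yields $1\leq r+1-k\leq r$, this is admissible. Formula~\eqref{e:Tian} then produces
\begin{equation*}
\frac{1}{p^{r+1-k}}\,\Phi_p^\star\bigl(c_{r+1-k}(\cT^\star,h^{\cT^\star})\bigr)=\binom{r}{r+1-k}c_1(L,h^L)^{r+1-k}+O\left(\frac{1}{p}\right),
\end{equation*}
and the identity $\binom{r}{r+1-k}=\binom{r}{k-1}$ converts this into \eqref{eq:cor1.5}. Under the polarization hypothesis $\omega=c_1(L,h^L)$, the refined expansion \eqref{e:Tian1} supplies the subleading contribution $\frac{1}{p}\binom{r-1}{(r+1-k)-1}c_1(E,h^E)\wedge\omega^{r-k}=\frac{1}{p}\binom{r-1}{r-k}c_1(E,h^E)\wedge\omega^{r-k}$, producing \eqref{eq:cor1.5-2}. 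The only non-routine step is the verification of hypotheses (A)-(C) and the embedding property of $\Phi_p$ for $L^p\otimes E$ at each large $p$; once these are in place, the corollary reduces to an index substitution together with the two binomial identities above, with the error terms inherited verbatim from Theorem~\ref{T:Tian}.
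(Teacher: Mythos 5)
Your proposal is correct and follows essentially the same route as the paper: it identifies both expectation currents with $\Phi_p^\star\bigl(c_{r+1-k}(\cT^\star,h^{\cT^\star})\bigr)$ by applying Theorem \ref{T:expdeg} to $(X,\omega)$ and $(L^p\otimes E,h^{L^p\otimes E})$ for large $p$, and then expands this pullback via Theorem \ref{T:Tian} with index $r+1-k$ together with the binomial identities $\binom{r}{r+1-k}=\binom{r}{k-1}$ and $\binom{r-1}{r-k}$. The only cosmetic difference is your verification of hypothesis (B) via the vanishing of the top Chern class for a bundle admitting a nowhere-zero section (which is valid, since $c_r(L^p\otimes E)=p^rc_1(L)^r+O(p^{r-1})\neq 0$ in cohomology for large $p$), whereas the paper checks (A)--(C) by appealing to the isomorphism \eqref{e:isostar} and, in Section \ref{S:zeros}, to Kleiman's theorem.
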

This paper is organized as follows. 
After some preliminary material in Section \ref{S:Prelim}, 
we present the proof of the approximation theorem for the pullback 
of Chern forms by the Kodaira map in Section \ref{S:Tian}.
In Section \ref{S:MTG}, we show that the Kodaira map can 
be interpreted as a meromorphic transform in the sense of Dinh-Sibony.
In Section \ref{S:expectation}, we compute the expectation of currents 
of integration along degeneracy sets of random holomorphic sections.
In Section \ref{S:zeros}, we establish that the limit distribution of zeros 
of random sequences of holomorphic sections of high powers 
$L^p\otimes E$ is $c_1(L,h^L)^r$. Finally, in Section \ref{S:tuples}, 
we specialize our results to the case of simultaneous zeros of several random holomorphic 
sections of $L^p$.

\section{Preliminaries}\label{S:Prelim}
In this section we gather the necessary material regarding Chern forms, universal bundles 
on Grassmannians, and Bergman kernels.
\subsection{Chern forms of Hermitian vector bundles}
Let $A$ be an $r\times r$ matrix of complex numbers. Recall that the {\em elementary invariant polynomials} $P^k$ are defined by (see, e.g., \cite[p.\ 402]{GH94}) 
\[\det(A+t\cdot\mathrm{Id}_r)=\sum_{k=0}^rP^{r-k}(A)t^k,\;t\in\C,\]
where $\mathrm{Id}_r$ is the $r\times r$ identity matrix. 
We have that $P^k(U^{-1}AU)=P^k(A)$ for any invertible $r\times r$ matrix $U$ and 
\begin{equation}\label{e:symmpol}
P^k(A)=\sum_{\sharp I=k}\det A_{I,I},
\end{equation}
where $A_{I,J}$ denotes the $(I,J)$-th minor $(A_{ij})_{i\in I,j\in J}$ of $A$. 

Since the wedge product is commutative on differential forms of degree $2$, $P^k(A)$
is well-defined in the same way for any $r\times r$ matrix $A$ of forms of degree $2$, namely
\[\det(A+t\cdot\mathrm{Id}_r)=\sum_{k=0}^rP^{r-k}(A)\wedge t^k.\]
where $t$ is a form of degree $2$ or a number.
Then 
\begin{equation}\label{e:Chern-m}
c_k(A):=P^k(A)
\end{equation} 
is a form of degree $2k$ given by \eqref{e:symmpol}.

\medskip

Let $(X,\omega)$ be a compact K\"ahler manifold of dimension $n$ and 
$(E,h^{E})$ be a holomorphic vector bundle on $X$ of rank $r\geq 
1$. Let $\nabla^{E}$ denote the associated Chern connection on $E$, 
and let $R^{E}=(\nabla^{E})^{2}$ be its curvature.

\begin{Definition}
The (total) Chern form associated to a Hermitian holomorphic vector 
bundle $(E,h^{E})$ of rank $r$ is defined by
\begin{equation}\label{e:Chern1}
c(E,h^{E})=\det\Big(\frac{i}{2\pi}R^{E}+\mathrm{Id}_{E}\Big)\in
\bigoplus_{k=0}^{r} \Omega^{(k,k)}(X,\R).
\end{equation}
\end{Definition}

The differential form $c(E,h^{E})$ is closed, thus it defines a de 
Rham cohomology class $c(E)\in H^{\bullet}_{\mathrm{dR}}(X,\R)$, which 
is independent of the choice of $h^{E}$.
The $k$-th Chern form $c_k(E,h^{E})$ is the component of bidegree $(k,k)$ of
$c(E,h^{E})$. Then
\begin{equation}\label{e:Chern2}
c(E,h^{E})=\sum_{k=0}^{r}c_k(E,h^{E})\,,\,\;c_k(E,h^E)
=c_k\Big(\frac{i}{2\pi}\,R^E\Big).
\end{equation}
In particular, 
\begin{equation}
\begin{split}
&c_{0}(E,h^{E})=1,\\
&c_{1}(E,h^{E})=\frac{i}{2\pi}\Tr^{E}[R^{E}],\\
&c_{2}(E,h^{E})=\frac{1}{8\pi^{2}}(\Tr^{E}[R^{E,2}]-
\Tr^{E}[R^{E}]^{2}),\\
&c_{r}(E,h^{E})=\left(\frac{i}{2\pi}\right)^{r}\det R^{E}.
\end{split}
\label{eq:3.2.3}
\end{equation}
If $(L,h^{L})$ is a Hermitian holomorphic line bundle on 
$X$ then $\rank(L\otimes E)=r$ and 
\begin{equation}
c(L\otimes E,h^{L\otimes E})=
\sum_{k=0}^{r}c_k(E,h^{E})\wedge c(L,h^{L})^{r-k},
	\label{eq:3.2.4}
\end{equation}
where, by definition, $c(L,h^{L})=1+c_{1}(L,h^{L})$. Then for 
$k=0,1,\ldots,r$, we have
\begin{equation}
c_k(L\otimes E,h^{L\otimes E})=
 \sum_{j=0}^k\binom{r-j}{k-j}c_j(E,h^{E})\wedge c_{1}(L,h^{L})^{k-j}.
	\label{eq:3.2.4bis}
\end{equation}
For every integer $p\geq 1$, let $(L^p, h^{L^p}):=
(L^{\otimes p}, (h^L)^{\otimes p})$.

\begin{Lemma}\label{L:ChernLp}
	For every $\ell\in\N$ there exists
	$C_\ell>0$ such that if $p\geq 1$ and 
	$0\leq k\leq r$ then 
	\begin{equation}
\left\|\frac{1}{p^k}c_k(L^{p}\otimes E, h^{L^{p}\otimes E})-
\binom rk c_1(L,h^L)^k\right\|_{\mathscr{C}^\ell(X)}\leq 
\frac{C_\ell}{p}\,.
	\end{equation}
\end{Lemma}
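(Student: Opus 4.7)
The plan is to derive the lemma as a direct algebraic consequence of formula \eqref{eq:3.2.4bis}, the multiplicativity of the first Chern form under tensor powers of a line bundle, and the compactness of $X$.

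First, I would apply \eqref{eq:3.2.4bis} with $L$ replaced by $L^{p}$, which gives
\begin{equation*}
c_k(L^{p}\otimes E, h^{L^{p}\otimes E})=\sum_{j=0}^{k}\binom{r-j}{k-j}c_{j}(E,h^{E})\wedge c_{1}(L^{p},h^{L^{p}})^{k-j}.
\end{equation*}
Since the curvature of the Chern connection on $L^{p}$ equals $p$ times the curvature of $L$, we have $c_{1}(L^{p},h^{L^{p}})=p\,c_{1}(L,h^{L})$. Substituting this identity and dividing by $p^{k}$ yields
\begin{equation*}
\frac{1}{p^{k}}c_{k}(L^{p}\otimes E, h^{L^{p}\otimes E})=\binom{r}{k}c_{1}(L,h^{L})^{k}+\sum_{j=1}^{k}\binom{r-j}{k-j}\frac{1}{p^{j}}\,c_{j}(E,h^{E})\wedge c_{1}(L,h^{L})^{k-j},
\end{equation*}
where the $j=0$ term (using $c_{0}(E,h^{E})=1$) has been isolated as the leading contribution.

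Next, I would estimate the tail. The forms $c_{j}(E,h^{E})$ and $c_{1}(L,h^{L})$ are smooth on the compact K\"ahler manifold $X$ and depend only on the fixed data $(X,\omega,h^{L},h^{E})$, hence each wedge product $c_{j}(E,h^{E})\wedge c_{1}(L,h^{L})^{k-j}$ has a finite $\mathscr{C}^{\ell}(X)$ norm bounded by a constant depending only on $\ell, r, h^{L}, h^{E}$. For the terms with $1\leq j\leq k\leq r$, the factor $p^{-j}$ is dominated by $p^{-1}$. Summing a bounded number of such terms and taking $C_{\ell}$ to be the resulting constant (uniform in $0\leq k\leq r$ and $p\geq 1$) gives the claimed bound $C_{\ell}/p$.

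There is essentially no serious obstacle: the lemma is a routine consequence of the Whitney-type product formula \eqref{eq:3.2.4bis} and the scaling $c_{1}(L^{p},h^{L^{p}})=p\,c_{1}(L,h^{L})$. The only point that requires a moment's care is making the constant $C_{\ell}$ uniform in $k$; this is handled by taking the maximum over the finitely many indices $0\leq k\leq r$ and $1\leq j\leq k$ of the relevant $\mathscr{C}^{\ell}$ norms of the fixed forms $c_{j}(E,h^{E})\wedge c_{1}(L,h^{L})^{k-j}$.
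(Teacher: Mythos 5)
Your proposal is correct and follows the same route as the paper: apply \eqref{eq:3.2.4bis} with $L$ replaced by $L^{p}$, use $c_{1}(L^{p},h^{L^{p}})=p\,c_{1}(L,h^{L})$, and bound the finitely many lower-order terms by $C_{\ell}/p$ using compactness of $X$. Nothing further is needed.
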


\begin{proof}
Since $c_{1}(L^{p},h^{L^p})=pc_{1}(L,h^{L})$, 
we get by \eqref{eq:3.2.4bis}
\begin{align*}
c_k(L^{p}\otimes E,h^{L^{p}\otimes E})&=
\sum_{j=0}^k\binom{r-j}{k-j} p^{k-j} c_j(E,h^{E})\wedge c_1(L,h^L)^{k-j}\\
&=p^k\binom rk c_1(L,h^L)^k+O(p^{k-1}).
\end{align*}
The lemma now follows directly from the above identity.
\end{proof}

\subsection{Universal bundle on Grassmannian}\label{SS:universal}
The Grassmannian $\mathbb{G}(r,m)$ is the set of 
$r$-dimensional complex linear subspaces of $\C^{m}$ ($r\leq m$). The universal 
(or tautologigal) vector bundle $\cT$ on $\mathbb{G}(r,m)$ 
is the holomorphic vector bundle of rank $r$ defined by 
\begin{equation}
	\cT=\left\{(W,f)\in \mathbb{G}(r,m)\times \C^{m}\;:\; 
	f\in W\subset\C^{m}\right\}.
\end{equation}
Let $h^{\cT}$ denote the Hermitian metric on $\cT$ 
induced by the standard Hermitian metric $h^{\C^{m}}$ on $\C^{m}$.
Let $\nabla^{\cT}$, $R^{\cT}$ denote 
the Chern connection, respectively the Chern curvature, of $(\cT, 
h^{\cT})$. We recall here the explicit formula for $R^{\cT}$ in certain local charts.

Let $W_{0}\in \mathbb{G}(r,m)$. We fix $\{e_{j}\}_{j=1}^{r}$ an 
orthonormal $\C$-basis of $(W_{0}, h^{\C^{m}}|_{W_{0}})$, and we extend it to an orthonormal basis 
$\{e_{j}\}_{j=1}^{m}$ of $\C^{m}$. 
The map 
$$(z_{j\ell})_{1\leq j\leq r,\; 1\leq \ell\leq m-r}\in 
\C^{r\times(m-r)}\mapsto \mathrm{Span}_{\C}\{e_{j}+
\sum_{\ell=1}^{m-r} z_{j\ell}e_{\ell+r}, 
j=1,\ldots,r\}\subset \C^{m},$$ 
is a local chart of $\mathbb{G}(r,m)$ 
near the point $W_{0}$. Set $Z=(z_{j\ell})_{{1\leq j\leq r,\; 1\leq \ell\leq m-r}}$, for 
the coordinate matrix of size $r\times (m-r)$.
For $(W,f)\in \cT$ and $W$ near $W_{0}$ set
\begin{equation}
	\eta_{j}(W)=e_{j}+\sum_{\ell=1}^{m-r} z_{j\ell}e_{\ell+r}\in\C^{m}\,,\,\;j=1,\ldots, r.
	\label{eq:2.11SG}
\end{equation}
Then $\{\eta_{j}\}_{j=1}^{r}$ is a local 
holomorphic frame of $\cT$ near $W_{0}$.

Let $K(W)=(K_{kj}(W))_{1\leq k,j \leq r}$ be the square matrix valued 
function near $W_{0}$ given by
\begin{equation}
	K_{kj}(W)=h^{\cT}(\eta_{j},\eta_{k}).
\end{equation}
Then
\begin{equation}
	K(W)=I+ZZ^\star,
\end{equation}
where $I=\mathrm{Id}_r$, $Z^\star=\bar{Z}^{\mathrm{T}}$, and  
$A^{\mathrm{T}}$ denotes the transpose of a matrix $A$.

Under this local holomorphic frame, the Chern connection of 
$(\cT, h^{\cT})$ is given by
\begin{equation}
	\nabla^{\cT}=d+K^{-1}\partial K.
\end{equation}
Then the Chern curvature is
\begin{equation}
	R^{\cT}=K^{-1}\bar{\partial}\partial K - 
	K^{-1}\bar{\partial}K \wedge K^{-1}\partial K.
\end{equation}
With the notations $dZ=(dz_{j\ell}), dZ^\star=(d\bar{z}_{j\ell})^{\mathrm{T}}$ 
we have
\begin{equation}
	R^{\cT}=-(I+ZZ^\star)^{-1}dZ\wedge 
	dZ^\star-(I+ZZ^\star)^{-1}ZdZ^\star\wedge(I+ZZ^\star)^{-1}dZ Z^\star.
\end{equation}
In particular, at $W_{0}$,
\begin{equation}
	R^{\cT}(W_{0})_{kj}=-\sum_{\ell=1}^{m-r} dz_{k\ell}\wedge 
	d\bar{z}_{j\ell}.
\end{equation}
Let $\cT^{\ast}$ be the dual holomorphic vector bundle of 
$\cT$ on $\mathbb{G}(r,m)$, and let $h^{\cT^{\ast}}$ 
be the induced Hermitian metric. For $j=1,\ldots, r$, 
$(W,f)\in\cT$, set
\begin{equation}
\left(\sigma_{j}(W),f\right)=(e^{j},f).
\label{eq:2.18SG}
\end{equation}
Then $\{\sigma_{j}\}_{j=1}^{r}$ is a local holomorphic frame of 
$\cT^{\ast}$ near $W_{0}$. Note that
\begin{equation}
	(\sigma_{i},\eta_{j})=\delta_{ij},
\end{equation}
i.e., $\{\sigma_{j}\}_{j=1}^{r}$ is exactly the dual frame of 
$\{\eta_{j}\}_{j=1}^{r}$.

Let $H(W)=(H_{kj}(W))_{1\leq k,j \leq r}$ be the square matrix valued 
function near $W_{0}$ given by
\begin{equation}
	H_{kj}(W)=h^{\cT^{\ast}}(\sigma_{j},\sigma_{k}).
\end{equation}
Then
\begin{equation}
H(W)=(K(W)^{-1})^{\mathrm{T}}=(I+\bar{Z}Z^{\mathrm{T}})^{-1}.
\end{equation}
Under this local holomorphic frame of $\cT^{\ast}$, the Chern connection of 
$(\cT^{\ast}, h^{\cT^{\ast}})$ is given by
\begin{equation}
	\nabla^{\cT^{\ast}}=d+H^{-1}\partial H.
	\label{eq:2.22New}
\end{equation}
Then the Chern curvature is
\begin{equation}\label{eq:2.22New1}
	R^{\cT^{\ast}}=H^{-1}\bar{\partial}\partial H - 
	H^{-1}\bar{\partial}H \wedge H^{-1}\partial H.
\end{equation}
In terms of the local coordinate $Z$ (viewed as a matrix of size 
$r\times(m-r)$),
\begin{equation}
	R^{\cT^{\ast}}=-d\bar{Z}\wedge 
	(dZ)^{\mathrm{T}}(I+\bar{Z}Z^{\mathrm{T}})^{-1}-\bar{Z}(dZ)^{T}(I+\bar{Z}Z^{\mathrm{T}})^{-1}\wedge d\bar{Z}Z^{\mathrm{T}}(1+\bar{Z}Z^{\mathrm{T}})^{-1}.
\end{equation}
Note that  
\begin{equation}
	R^{\cT^{\ast}}=-(R^{\cT})^{\mathrm{T}}.
\end{equation}
In particular, at $W_{0}$,
\begin{equation}
	R^{\cT^{\ast}}(W_{0})_{kj}=\sum_{\ell=1}^{m-r} dz_{j\ell}\wedge 
	d\bar{z}_{k\ell}.
\end{equation}
Now we can write the Chern forms for $\cT^\ast$ at the point $W_0$ in the above coordinates. 
By \eqref{e:Chern2} and \eqref{e:symmpol} we have for $k=1,\ldots, r$,
\begin{equation}
c_k(\cT^\ast,h^{\cT^\ast})(W_0)=\frac{i^k}{(2\pi)^k}\sum_{\sharp I=k}\det R^{\cT^{\ast}}(W_{0})_{I,I},
\label{eq:2.27-chern}
\end{equation}
where $R^{\cT^{\ast}}(W_{0})_{I,J}$ is the  $(I,J)$-th minor $(R^{\cT^{\ast}}(W_{0})_{ij})_{i\in I,j\in J}$ of the matrix $R^{\cT^{\ast}}(W_{0})$.


\subsection{Bergman kernel asymptotics}\label{SS:Bergman}

Let $(X,\omega)$ be a compact K\"{a}hler manifold of dimension $n$, 
and let $(L,h^{L})$ be a positive holomorphic line bundle on $X$, 
that is, $h^{L}$ is a smooth Hermitian metric on $L$ 
such that the first Chern form $c_{1}(L,h^{L})$ is a K\"ahler form 
on $X$. Moreover, $c_{1}(L,h^{L})$ is a de Rham
representative of the Chern class $c_{1}(L)\in H^{2}(X, \R)$.
We identify the 2-form $R^L$ with the Hermitian matrix
$\dot{R}^L \in \End(T^{(1,0)}X)$ such that
for $W,Y\in T^{(1,0)}X$,
$R^L (W,\ov{Y}) = \langle \dot{R}^LW, \ov{Y}\rangle$.

Here the K\"{a}hler form $\omega$ is not necessarily 
equal to $c_{1}(L,h^{L})$. We will call $(L,h^{L})$ 
a prequantum line bundle of $(X,\omega)$ if the following condition holds
\begin{equation}
	\omega=c_{1}(L,h^{L})=\frac{i}{2\pi} R^{L}.
	\label{eq:3.1.3}
\end{equation}

Let $(E,h^{E})$ be a holomorphic vector bundle on $X$ of rank $r\geq 
1$. Let $\nabla^{E}$ denote the associated Chern connection on $E$, 
and let $R^{E}=(\nabla^{E})^{2}$ be its curvature.

Let $\mathcal{L}^2(X, L^{p}\otimes E)$ be the $L^2$-space
corresponding to the $L^2$ inner product on $X$ induced by $g^{TX}$, 
$h^{L^{p}}$ and $h^{E}$.
Let $H^{0}(X, L^{p}\otimes E)$ denote the space of holomorphic 
sections of $L^{p}\otimes E$ over $X$, which is a finite dimensional 
complex vector subspace of $\mathcal{L}^2(X, L^{p}\otimes E)$.
Let 
\begin{equation}
P_p:\mathcal{L}^2(X, L^{p}\otimes E)\to
H^{0}(X, L^{p}\otimes E)
	\label{eq:3.1.4}
\end{equation}
be the orthogonal (Bergman) projection.
The Schwartz kernel $P_p(\cdot,\cdot)$ of $P_p$
with respect to the volume form $\frac{\omega^n}{n!}$ 
is called the Bergman kernel of $H^{0}(X, L^{p}\otimes E)$.
It is a smooth section of $(L^p\otimes E)\boxtimes(L^p\otimes E)$
over $X\times X$, 
\begin{equation}\label{bk2.2}
P_p(x,x') \in (L^p\otimes E)_x\otimes (L^p\otimes E)_{x'}^*\,,
\end{equation}
especially, 
\begin{equation}\label{bk2.3}
P_p(x,x) \in \End(L^p\otimes E)_x = \End(E)_x,
\end{equation}
where we use the canonical identification $\End(L^p)=\C$ 
for any line bundle $L$ on $X$.
Let $\{S^p_j\}_{j=0}^{d_p}$, $d_p := \dim H^{0}(X,L^p\otimes E)-1$, 
be any orthonormal basis of
$H^{0}(X,L^p\otimes E)$ with respect to the $L^2$ inner
product.
Then
\begin{equation} \label{bk2.4}
P_p(x,x')=\sum_{j=0}^{d_p} S^p_j (x) \otimes S^p_j(x')^*
\in (L^p\otimes E)_x\otimes (L^p\otimes E)_{x'}^*.
\end{equation}
and 
\begin{equation} \label{bk2.5}
P_p(x,x)= \sum_{j=0}^{d_p} S^p_j (x) \otimes S^p_j(x)^*
\in \End(E_x).
\end{equation}
We have the following diagonal asymptotic expansion
of the Bergman metric. 
\begin{Theorem}[{\cite[Theorem 4.1.1]{MM07}}]\label{bkt2.1} 
For every $m\in\N$ there exist smooth coefficients
$\boldsymbol{b}_m(x)\in\End (E)_x$,
which are polynomials in $R^{TX}$, 
$R^E$ {\rm (}and $d\Theta$, $R^L${\rm )} and their derivatives of order
$\leqslant 2m-2$  {\rm (}resp. $2m-1$, $2m${\rm )} and reciprocals
of linear combinations of eigenvalues of $\dot{R}^L$ at $x$, 
such that
\begin{align}\label{abk2.5}
\boldsymbol{b}_0={\det}(\dot{R}^L/(2\pi)) \Id_{E},
\end{align}
and for any $k,\ell\in\N$, there exists
$C_{k,\ell}>0$ such that for any $p\in \N$,
\begin{align}\label{bk2.6}
&\Big \|P_p(x,x)- \sum_{m=0}^{k}\boldsymbol{b}_m(x)
p^{n-m} \Big \|_{\cC^\ell(X)} 
\leqslant C_{k,\ell}\, p^{n-k-1}.
\end{align}
\end{Theorem}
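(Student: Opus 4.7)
The plan is to follow the analytic localization-and-rescaling approach of Ma--Marinescu (building on Bismut--Vasserot and Dai--Liu--Ma). The Bergman projection $P_p$ is the spectral projector of the Kodaira Laplacian $\Box_p$ acting on $L^p\otimes E$-valued $(0,0)$-forms onto its kernel. By the Bochner--Kodaira--Nakano formula and the positivity of $(L,h^L)$, there exist $c>0$ and $p_0\in\N$ such that $\mathrm{Spec}(\Box_p)\subset\{0\}\cup[cp,\infty)$ for all $p\geq p_0$. The first step is to combine this spectral gap with finite propagation speed for $\cos(t\sqrt{\Box_p})$ to prove that for every $\varepsilon>0$ and $N,\ell\in\N$,
\begin{equation*}
\|P_p(x,y)\|_{\cC^\ell}\leq C_{N,\ell}\,p^{-N}\quad\text{whenever }d(x,y)\geq\varepsilon,
\end{equation*}
so that the diagonal asymptotics are determined by the behavior of $P_p$ in an arbitrarily small neighborhood of the diagonal.

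Next I would fix $x_0\in X$, take normal K\"ahler coordinates $Z$ on a small ball $B(x_0,2\varepsilon)$, and trivialize $L$ and $E$ on this ball by orthonormal frames parallel-transported along radial rays. Under the rescaling $u=\sqrt{p}\,Z$, the conjugated operator $p^{-1}\Box_p$ pulls back to an operator $\mathcal{L}_p$ on $B(0,2\sqrt{p}\,\varepsilon)\subset\C^n$ admitting a Taylor expansion
\begin{equation*}
\mathcal{L}_p=\mathcal{L}_0+\sum_{j\geq 1}p^{-j/2}\mathcal{O}_j,
\end{equation*}
where $\mathcal{L}_0$ is a generalized harmonic oscillator on $\C^n$ with constant coefficients determined solely by $\dot{R}^L_{x_0}$, and each $\mathcal{O}_j$ is a differential operator whose polynomial coefficients involve the jets of $R^{TX}$, $R^L$, and $R^E$ at $x_0$ of order at most $j$.

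The model operator $\mathcal{L}_0$ is of Bargmann--Fock type; its Bergman kernel $\mathcal{P}_0(u,u')$ on $L^2(\C^n,E_{x_0})$ has an explicit Gaussian form, and a direct computation gives $\mathcal{P}_0(0,0)=\det(\dot{R}^L_{x_0}/(2\pi))\,\Id_{E_{x_0}}$, producing the leading coefficient $\boldsymbol{b}_0$ of \eqref{abk2.5}. To obtain the higher $\boldsymbol{b}_m$ I would expand the resolvent $(\lambda-\mathcal{L}_p)^{-1}$ as a formal Neumann series in $p^{-1/2}$ around $(\lambda-\mathcal{L}_0)^{-1}$ and integrate it against $\frac{1}{2\pi i}\oint d\lambda$ on a small circle enclosing only the eigenvalue $0$. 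This yields an asymptotic series for the rescaled Bergman kernel on the diagonal and identifies each $\boldsymbol{b}_m$ as a universal polynomial in the curvatures and their derivatives, with the reciprocal-of-eigenvalue factors arising from inverting $\mathcal{L}_0$ on its orthogonal complement.

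The main technical obstacle is upgrading this formal expansion to a genuine $\cC^\ell$ asymptotic with the uniform remainder $O(p^{n-k-1})$ required in \eqref{bk2.6}. This demands $p$-uniform Sobolev estimates for the family $\mathcal{L}_p$ on weighted spaces adapted to the Gaussian decay of $\mathcal{P}_0$, uniform control of all constants in $x_0$, and a careful comparison between the true rescaled Bergman projection and its truncated expansion, closed up by the off-diagonal decay established in the first step. Once these weighted resolvent estimates are in place, gathering the first $k+1$ terms of the expansion yields the stated bound.
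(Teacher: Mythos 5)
Your outline is essentially the proof of the cited result: the paper does not prove this theorem itself but quotes it from Ma--Marinescu \cite[Theorem 4.1.1]{MM07}, whose argument is exactly the spectral-gap plus finite-propagation-speed localization, rescaling to the Bargmann--Fock model operator, and formal resolvent expansion with uniform weighted Sobolev estimates that you describe. So the proposal is correct and follows the same route as the source the paper relies on; no genuinely different method is involved.
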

In \cite[Theorem 4.1.1]{MM07} additional information about
the uniformity of the expansion with respect to 
$g^{TX}$, $h^L$, and $h^E$ and their derivatives is given.
We refer to \cite{DLM06,MM15} for further interesting results on the asymptotic 
expansion of the Bergman kernels for powers of a holomorphic line bundle, or 
more generally for an arbitrary sequence of holomorphic line bundles \cite{CLMM}.

\subsection{Dimension of the space of holomorphic sections}
Let $(X,\omega)$ be a compact K\"{a}hler manifold of
dimension $n$, 
and let $(L,h^{L})$ be a positive line bundle on $X$. 
Let $(E,h^{E})$ be a holomorphic vector bundle on $X$ of rank $r\geq1$.
For $p$ large enough the Kodaira-Serre vanishing theorem 
\cite[Theorem 1.5.6]{MM07}
says that $H^q(X,L^p\otimes E)=0$ for every $q\geq1$ 
and thus, the Euler number 
$\chi(X,L^p\otimes E)=\sum_{q=0}^n(-1)^q\dim H^q(X,L^p\otimes E)$ equals
$\dim H^0(X,L^p\otimes E)$.
On the other hand, by the Riemann-Roch-Hirzebruch theorem \cite[Theorem 4.9]{BGV}
we have
\[\chi(X,L^p\otimes E)=
\int_X \operatorname{td}(T X) \operatorname{ch}(L^p\otimes E),\]
where $\operatorname{td}(T^{1,0}X)$ denotes the Todd class
of $T^{1,0}X$ and $\operatorname{ch}(F)$ denotes the Chern character
of a Hermitian vector bundle $F$ (see e.g.\ \cite[Appendix B.5]{MM07}).
Hence, we have for $p$ large enough,
\begin{equation}\label{abk2.3}
\begin{split}
\dim H^{0}(X,L^p\otimes E)&=\chi(X,L^p\otimes E)=
\int_X \operatorname{td}(T X) \operatorname{ch}(L^p\otimes E)\\
&= r\int_X \frac{c_1(L)^n}{n!} \;p^n +R_{n-1}(p),
\end{split}
\end{equation}
where $R_{n-1}(p)$ is a polynomial of degree $(n-1)$ in $p$.
Consequently, $\dim H^{0}(X,L^p\otimes E)$ is a polynomial of degree $n$ in $p$
with positive leading coefficient $r\int_X c_1(L)^n/n!$.

Furthermore, one can also determine the order of growth of $\dim H^{0}(X,L^p\otimes E)$
by utilizing the identity
\[\dim H^{0}(X,L^p\otimes E)=
\int_X \operatorname{Tr}_E[ P_p(x,x)] dv_X(x),\]
and integrating the expansion \eqref{bk2.6} to obtain, for sufficiently large $p$,
\[\dim H^{0}(X,L^p\otimes E)=r\int_X \frac{c_1(L)^n}{n!} \;p^n+O(p^{n-1}).\]

\section{Tian's theorem for Grassmannian embeddings}\label{S:Tian}

We give in this section the proof of Theorem \ref{T:Tian}. 
Recall from \eqref{e:Kod1} the definition of the 
Kodaira map $\Phi_{p}:X\to\G(r,V_p^\star)$, where 
$V_p=H^{0}(X, L^{p}\otimes E)$. 
Since $L$ is positive this map is a holomorphic  
embedding for all $p$ sufficiently large (see \cite[Theorems 5.1.15--5.1.18]{MM07}).
Recall that  $\cT$ is the universal holomorphic vector bundle over 
$\G(r,V_p^\star)$ endowed with the Hermitian metric $h^{\cT}$  induced by the
inner product on $V_p^\star$. Let $(\cT^\star,h^{\cT^\star})$ be the corresponding 
dual holomorphic vector bundle. 

If $s\in V_p$ the evaluation map $\varepsilon_s:V_p^\star\to\C$, 
$\varepsilon_s(f)=f(s)$, defines a holomorphic 
section $\sigma_{s}\in H^0(\G(r,V_p^\star),\cT^\star)\equiv V_p\,$, 
where $\sigma_s(W)=\varepsilon_s|_W$, 
$W\in\G(r,V_p^\star)$.
For $p$ large enough we have the following isomorphism of 
holomorphic vector bundles on $X$ (see  \cite[Theorems 5.1.16]{MM07}),
\begin{equation}
	\begin{split}
&\Psi_{p}:\Phi_{p}^\star(\cT^\star)\rightarrow \;L^{p}\otimes E,\\
&\Psi_{p}((\Phi_{p}^\star\sigma_{s})(x))=s(x),\;\text{ for any $s\in V_p$, $x\in X$.}
	\end{split}
	\label{e:isostar}
\end{equation}
Moreover, under this isomorphism, we have
\begin{equation}
	h^{\Phi_{p}^\star(\cT^\star)}(x)=h^{L^{p}\otimes E}(x)\circ 
	P_{p}(x,x)^{-1},
	\label{e:isostarP}
\end{equation}
where $h^{\Phi_{p}^\star(\cT^\star)}$ is the Hermitian metric on 
$\Phi_{p}^\star(\cT^\star)$ induced by $h^{\cT^\star}$.

Assertion \eqref{e:Tian} of Theorem \ref{T:Tian} is a direct consequence of Theorem \ref{T:Tian2} 
below and Lemma \ref{L:ChernLp}.

\begin{Theorem}\label{T:Tian2}
Let $(X,\omega)$ be a compact K\"ahler manifold of dimension $n$, $(L,h^L)$ 
be a positive line bundle on $X$, and $(E,h^E)$ be a 
Hermitian holomorphic vector bundle on $X$ of rank $r\leq n$. 
Then for every $\ell\in\N$ there exists $C_\ell>0$ such that 
\begin{equation}\label{e:Tian2}
\big\|\Phi_p^\star(c_k(\cT^\star,h^{\cT^\star}))-
c_{k}(L^{p}\otimes E, h^{L^{p}\otimes E})
\big\|_{\cC^\ell(X)}\leq C_\ell\,p^{k-1}
\end{equation}
holds for $0\leq k\leq r$, and for all $p$ sufficiently large.
\end{Theorem}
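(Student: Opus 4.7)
The plan is to transport the estimate to $X$ through the isomorphism $\Psi_{p}$ of \eqref{e:isostar}, reducing \eqref{e:Tian2} to a comparison between two Hermitian metrics on the \emph{same} holomorphic bundle $L^{p}\otimes E$, and then to control the difference of the resulting Chern forms using the Bergman kernel expansion of Theorem \ref{bkt2.1}.

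First, since Chern forms are natural under holomorphic bundle isomorphisms, $\Psi_{p}$ transports $h^{\cT^{\star}}$ to a metric $h'_{p}$ on $L^{p}\otimes E$ satisfying
\[\Phi_{p}^{\star}c_{k}(\cT^{\star},h^{\cT^{\star}}) = c_{k}(L^{p}\otimes E,h'_{p}),\]
and by \eqref{e:isostarP} this metric is $h'_{p}=h^{L^{p}\otimes E}\!\circ\! P_{p}(x,x)^{-1}$. Thus \eqref{e:Tian2} reduces to establishing the $\cC^{\ell}$-estimate
\[\|c_{k}(L^{p}\otimes E,h'_{p})-c_{k}(L^{p}\otimes E,h^{L^{p}\otimes E})\|_{\cC^{\ell}(X)}\leq C_{\ell}\,p^{k-1}.\]

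Next, Theorem \ref{bkt2.1} yields, in every $\cC^{\ell}(X)$, a factorization $P_{p}(x,x)=p^{n}b_{0}(x)(\Id_{E}+B_{p}(x))$ with $b_{0}=\det(\dot{R}^{L}/(2\pi))$ positive and $\|B_{p}\|_{\cC^{\ell}(X)}=O(1/p)$. The standard formula for the change of Chern curvature under a change of Hermitian metric, applied to the scalar factor $p^{n}b_{0}$, produces $\partial\bar{\partial}\log b_{0}\cdot\Id_{E}$, which is $p$-independent and bounded in every $\cC^{\ell}$; applied to the endomorphism factor $\Id_{E}+B_{p}$, it produces terms involving $\partial B_{p}$, $\bar{\partial}B_{p}$, $\partial\bar{\partial}B_{p}$ modulated by $(\Id_{E}+B_{p})^{-1}$, all of which are $O(1/p)$ in $\cC^{\ell}$. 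Consequently one obtains
\[R^{h'_{p}} = R^{L^{p}\otimes E} + \Delta_{p},\qquad \|\Delta_{p}\|_{\cC^{\ell}(X)}\leq C'_{\ell}\,\text{ uniformly in }p.\]

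Finally, by \eqref{e:Chern-m}--\eqref{e:Chern2}, $c_{k}$ is, up to a universal constant, the $k$-th elementary invariant polynomial $P^{k}$ applied to the curvature. Since $R^{L^{p}\otimes E}=pR^{L}\otimes\Id_{E}+\Id_{L^{p}}\otimes R^{E}$ has $\cC^{\ell}$-norm of order $p$ while $\Delta_{p}=O(1)$, the difference $P^{k}(R^{L^{p}\otimes E}+\Delta_{p})-P^{k}(R^{L^{p}\otimes E})$ expands as a sum of monomials each containing at least one factor $\Delta_{p}$ and at most $k-1$ factors $R^{L^{p}\otimes E}$, and is therefore bounded in $\cC^{\ell}$ by $C_{\ell}\|\Delta_{p}\|\cdot\|R^{L^{p}\otimes E}\|^{k-1}=O(p^{k-1})$, which together with Step 1 proves \eqref{e:Tian2}. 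The main obstacle is the second step: pinning down in a coordinate-free $\End(E)$-valued fashion the change-of-metric formula and verifying carefully that the only piece of the $p$-dependence not already present in $R^{L^{p}\otimes E}$ gets absorbed into a uniformly bounded error $\Delta_{p}$. Once this is done, the polynomial expansion in Step 3 is routine.
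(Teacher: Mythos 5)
Your proposal is correct and follows essentially the same route as the paper's proof: transport the problem to $L^{p}\otimes E$ via \eqref{e:isostar}--\eqref{e:isostarP}, use the diagonal Bergman kernel expansion of Theorem \ref{bkt2.1} to show that the curvature of the transported metric differs from $R^{L^{p}\otimes E}$ by a term bounded uniformly in $p$ in every $\cC^{\ell}$-norm (the paper's $A_{p}$, your $\Delta_{p}$), and conclude by the degree-$k$ polynomial estimate for $c_{k}$. The only imprecision is your claim that the corrections coming from the factor $\Id_{E}+B_{p}$ are all $O(1/p)$: the cross terms with the $O(p)$ connection form of $h^{L^{p}\otimes E}$ and the conjugation of the $O(p)$ curvature by $\Id_{E}+B_{p}$ are in general only $O(1)$ (in the paper the $p$-weighted scalar part cancels because it is a multiple of $\Id_{E}$), but since your argument only needs $\Delta_{p}=O(1)$, the proof of \eqref{e:Tian2} goes through exactly as in the paper.
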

\begin{proof}
We assume throughout the proof that $p$ is sufficiently large. By \cite[(5.1.38)]{MM07}, 
the evaluation map $s\in V_p\mapsto s(x)\in(L^{p}\otimes E)_{x}$ is surjective at any point $x\in X$.

We fix a point $x_0\in X$ and prove that \eqref{e:Tian2} holds in a neighborhood of $x_0$. 
Let $e_{L}$ be a holomorphic frame of $L$ near $x_{0}$ and $v_{1},\ldots, v_{r}$ be a 
holomorphic frame  of $E$ near $x_{0}$. Set 
\[t_{j}=v_{j}\otimes e_{L}^{\otimes p}\,,\,\;j=1,\ldots,r.\]
Then $t_{1},\ldots, t_{r}$ is a holomorphic frame  of $L^p\otimes E$ near $x_{0}$. 
Using the isomorphism $\Psi_{p}$ given in \eqref{e:isostar} we see that there is a holomorphic 
frame $\tau_{1},\ldots, \tau_{r}$ of $\Phi_{p}^\star(\cT^\star)$ near $x_0$ such that 
\[\Psi_{p}(\tau_j(x))=t_j(x),\;j=1,\ldots,r,\;x\text{ near }x_0.\]
Using \eqref{e:isostarP} we infer that 
\begin{equation}\label{e:isostarP1}
h^{\Phi_{p}^\star(\cT^\star)}(\tau_{m}(x),\tau_{j}(x))=
h^{L^{p}\otimes E}\big(P_p(x,x)^{-1}t_{m}(x),t_{j}(x)\big).
\end{equation}

In order to prove \eqref{e:Tian2}, we will relate $R^{\Phi_{p}^\star(\cT^\star)}$ 
to $R^{L^p\otimes E}$ via 
\eqref{e:isostarP1}. For sufficiently large $p$, 
$P_p(x,x)\in\mathrm{End}(E_x)$ is invertible. 
For $x$ near $x_{0}$, we define the matrix valued function $Q_p(x)$ by
	\begin{equation}
		P_{p}(x,x)^{-1}t_{j}(x)=\sum_{k=1}^r(Q_{p}(x))_{kj} t_{k}(x).
		\label{eq:3.2.16}
	\end{equation}
This is the matrix representing the inverse of $P_{p}(x,x)$ in the frame $t_1,\ldots,t_r$. 
For $p$ sufficiently large, we obtain the asymptotics of $Q_p(x)$ 
from the asymptotics of $P_p(x,x)$ stated in Theorem \ref{bkt2.1}. 
More precisely, we have the uniform expansion in any local $\cC^\ell$-norm near $x_0$,
\begin{equation}
P_p(x,x)=Q_p(x)^{-1}=p^n b_0(x) \left(\Id_E+O(p^{-1})\right),\text{ where } b_0={\det}(\dot{R}^L/(2\pi)).
\label{eq:3.12Oct24}
\end{equation}
As a consequence, we have the analogous expansion in any local $\cC^\ell$-norm 
\begin{equation}
Q_p(x)=p^{-n}(b_0(x))^{-1}\left(\Id_E+O(p^{-1})\right).
\label{eq:3.13Oct24}
\end{equation}
Using this we obtain 
\begin{equation}
\begin{split}
& \partial Q_p(x)=-p^{-n}\frac{\partial b_0(x) }{(b_0(x))^{2}}\,\Id_E+O(p^{-n-1}),\\
& \bar{\partial} Q_p(x)=-p^{-n}\frac{\bar{\partial}b_0(x) }{(b_0(x))^{2}}\,\Id_E+O(p^{-n-1}).
\end{split}
\label{eq:3.14Oct24}
\end{equation}

Consider the $r\times r$-matrix valued functions $H^{\Phi_{p}^\star(\cT^\star)}=\big(H^{\Phi_{p}^\star(\cT^\star)}_{jm}\big)$, 
$H^{L^{p}\otimes E}=\big(H^{L^{p}\otimes E}_{jm}\big)$ defined near $x_0$ by 
\[H^{\Phi_{p}^\star(\cT^\star)}_{jm}(x)=h^{\Phi_{p}^\star(\cT^\star)}(\tau_{m}(x),\tau_{j}(x))\,,\,\;
H^{L^{p}\otimes E}_{jm}(x)=h^{L^{p}\otimes E}(t_{m}(x),t_{j}(x)).\]
By \eqref{e:isostarP1} we obtain 
\begin{equation}
H^{\Phi_{p}^\star(\cT^\star)}=H^{L^{p}\otimes E}Q_p.
\label{eq:3.2.18}
\end{equation}
As in \eqref{eq:2.22New}-\eqref{eq:2.22New1}, the curvature tensor is given by
\[
		R^{\Phi_{p}^\star(\cT^\star)}=-(H^{\Phi_{p}^\star(\cT^\star)})^{-1}\bar{\partial} 
		H^{\Phi_{p}^\star(\cT^\star)}\wedge (H^{\Phi_{p}^\star(\cT^\star)})^{-1}\partial 
		H^{\Phi_{p}^\star(\cT^\star)}+(H^{\Phi_{p}^\star(\cT^\star)})^{-1}\bar{\partial}\partial 
		H^{\Phi_{p}^\star(\cT^\star)}.
	\]
Using \eqref{eq:3.2.18} we obtain
\begin{equation*}
\begin{split}
&R^{\Phi_{p}^\star(\cT^\star)}=\\&-\big(Q_{p}^{-1}(H^{L^{p}\otimes E})^{-1}
(\bar{\partial} H^{L^{p}\otimes E})Q_{p}+Q_{p}^{-1}\bar{\partial}Q_{p}\big)\wedge 
\big(Q_{p}^{-1}(H^{L^{p}\otimes E})^{-1}(\partial H^{L^{p}\otimes E})Q_{p}+
Q_{p}^{-1}\partial Q_{p}\big) \\
&+Q_{p}^{-1}(H^{L^{p}\otimes E})^{-1}(\bar{\partial}\partial H^{L^{p}\otimes E})
Q_{p}-Q_{p}^{-1}(H^{L^{p}\otimes E})^{-1}\partial H^{L^{p}\otimes E}
\wedge\bar{\partial}Q_{p}\\
&+Q_{p}^{-1}(H^{L^{p}\otimes E})^{-1}
\bar{\partial}H^{L^{p}\otimes E}
\wedge\partial Q_{p} +Q_{p}^{-1}\bar{\partial}\partial Q_{p}
\end{split}
\end{equation*}
Hence
\begin{equation}\label{e:id1}	
\begin{split}		
R^{\Phi_{p}^\star(\cT^\star)}&=Q_{p}^{-1}R^{L^{p}\otimes E}Q_{p}-Q_{p}^{-1}
\bar{\partial}Q_{p} \wedge Q_{p}^{-1}\partial 
Q_{p}+Q_{p}^{-1}\bar{\partial}\partial Q_{p}\\
&\hspace{5mm}-Q_{p}^{-1}(H^{L^{p}\otimes E})^{-1}
\partial H^{L^{p}\otimes 
			E}\wedge\bar{\partial}Q_{p} \\
			&\hspace{5mm}-Q_{p}^{-1}(\bar{\partial}Q_{p}) Q_{p}^{-1}\wedge 
			(H^{L^{p}\otimes E})^{-1}(\partial H^{L^{p}\otimes E})Q_{p}. 
\end{split}
\end{equation}

Set $H^{E}_{jm}=h^{E}(v_{m},v_{j})$, $\varphi_{L}(x)=|e_{L}|_{h^{L}}^{2}(x)$. Then
	\[H^{L^{p}\otimes E}=H^{E}\varphi_{L}^{p}.\]
A direct computation shows
\begin{equation}\label{eq:250305}
(H^{L^{p}\otimes E})^{-1}\partial H^{L^{p}\otimes E}= 
(H^{E})^{-1}\partial 
H^{E}+\frac{p}{\varphi_{L}}\,\partial\varphi_{L}\mathrm{Id}_{E}.
\end{equation}
The presence of $\mathrm{Id}_{E}$ in formula \eqref{eq:250305}
transforms \eqref{e:id1} into the following:
\begin{align*}
	R^{\Phi_{p}^\star(\cT^\star)}
	=Q_{p}^{-1}&R^{L^{p}\otimes E}Q_{p}-Q_{p}^{-1}\bar{\partial}Q_{p} \wedge Q_{p}^{-1}\partial 
	Q_{p}+Q_{p}^{-1}\bar{\partial}\partial Q_{p}\\
	&-Q_{p}^{-1}(H^{E})^{-1}\partial H^{E}\wedge\bar{\partial}Q_{p}-  
	Q_{p}^{-1}(\bar{\partial}Q_{p}) Q_{p}^{-1}\wedge(H^{E})^{-1}(\partial H^{E})Q_{p}.
\end{align*}
Hence
\begin{equation}\label{e:id2}
\Phi_p^\star\big(R^{\cT^\star}\big)=R^{\Phi_{p}^\star(\cT^\star)}=Q_p^{-1}(R^{L^{p}\otimes E}+A_p)Q_p,
\end{equation}
where 
\begin{align*}
A_p=-\bar{\partial}Q_{p}& \wedge Q_{p}^{-1}(\partial Q_{p})Q_{p}^{-1}+
(\bar{\partial}\partial Q_{p})Q_{p}^{-1}\\
&-(H^{E})^{-1}\partial H^{E}\wedge(\bar{\partial}Q_{p})Q_{p}^{-1}
-(\bar{\partial}Q_{p}) Q_{p}^{-1}\wedge(H^{E})^{-1}\partial H^{E}.
\end{align*}
Using \eqref{eq:3.12Oct24}--\eqref{eq:3.14Oct24} 
and the fact the leading terms in these 
asymptotic expansions are diagonal matrices, 
we infer that for every $\ell$ there exists $C_\ell >0$ such that 
for all $p\geq1$,  
\begin{equation}\label{e:Ap}
\left\|A_p\right\|_{\cC^\ell(\mathrm{near\;}x_{0})}\leq C_\ell.
\end{equation}
Since $R^{L^{p}\otimes E}=pR^{L}\otimes \mathrm{Id}_{E}+R^E$, 
it follows from \eqref{e:symmpol}, 
\eqref{e:Chern-m}, \eqref{e:Chern2} and \eqref{e:Ap}
that for every $\ell$ there exists $C'_\ell >0$ 
such that for all $p\geq1$ and $0\leq k\leq r$, we have
\begin{equation}\label{e:Tian-est}
\left\|c_k\Big(\frac{i}{2\pi}(R^{L^{p}\otimes E}+A_p)\Big)-c_{k}(L^{p}\otimes E, h^{L^{p}\otimes 
E})\right\|_{\cC^\ell(\mathrm{near\;}x_{0})}\leq C'_\ell\,p^{k-1}.
\end{equation}
Note that the above computations are local, but the estimates
hold true uniformly on the entire manifold $X$ due to the compactness of $X$. 
By \eqref{e:id2} we have that 
\begin{equation}\label{e:id3} 
\Phi_p^\star(c_k(\cT^\star,h^{\cT^\star}))=c_k\Big(\frac{i}{2\pi}(R^{L^{p}\otimes E}+A_p)\Big),
\end{equation}
and 
\eqref{e:Tian2} follows directly from \eqref{e:Tian-est} and \eqref{e:id3}.
\end{proof}

\begin{Remark} For $k=1$, \eqref{e:Tian} 
coincides with \cite[Theorem 5.1.17]{MM07},
\begin{equation}\label{eq:3.2.26}
\frac{1}{p}\,\Phi_{p}^\star(\omega_{\FS})-r c_1(L,h^L)=
O\left(\frac{1}{p}\right),
\end{equation}
where $\omega_{\FS}$ is the Fubini-Study form on 
$\mathbb{G}(r,H^{0}(X,L^{p}\otimes E)^\star)$.
\end{Remark}
We now consider the case when
$(L,h^L)$ is a prequantum line bundle of $(X,\omega)$, 
i.e., \eqref{eq:3.1.3} holds. 
In this case, if $(E,h^E)$ is the trivial line bundle, Tian's estimate 
\eqref{eq:3.2.26} of the speed of convergence of the induced Fubini-Study metric
to $\omega$ can be improved as shown in \cite[Remark 5.1.5]{MM07}:
\begin{equation}\label{eq:3.2.27}
\frac{1}{p}\,\Phi_{p}^\star(\omega_{\FS})-c_1(L,h^L)=
O\left(\frac{1}{p^2}\right).
\end{equation}
For general Chern forms we obtain
the following 
generalization of \eqref{eq:3.2.27}.
\begin{Theorem}\label{T:Tian3}
Let $(X,\omega)$ be a compact K\"ahler manifold of dimension $n$, 
$(L,h^L)$ be a positive line bundle on $X$, and $(E,h^E)$ be a Hermitian 
holomorphic vector bundle on $X$ of rank $r\leq n$. 
Assume that $\omega=c_{1}(L,h^{L})$. 
Then for every $\ell\in\N$ there exists $C_\ell>0$ such that 
\begin{equation}\label{e:3.2.28}
\big\|\Phi_p^\star(c_k(\cT^\star,h^{\cT^\star}))-
c_{k}(L^{p}\otimes E, h^{L^{p}\otimes E})\big\|_{\cC^\ell(X)}
\leq C_\ell\,p^{k-2}
\end{equation}
holds for $0\leq k\leq r$, and for all $p$ sufficiently large.
\end{Theorem}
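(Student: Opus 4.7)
The plan is to rerun the proof of Theorem \ref{T:Tian2} under the prequantum hypothesis $\omega = c_1(L,h^L)$ and show that the error matrix $A_p$ appearing in the identity \eqref{e:id2} acquires one extra factor of $p^{-1}$ relative to the general-case estimate \eqref{e:Ap}. This immediately sharpens the Chern-form estimate \eqref{e:Tian-est} by the same factor and yields \eqref{e:3.2.28}.

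First I exploit the prequantum hypothesis to observe that $\det(\dot R^L/(2\pi)) \equiv 1$, hence by \eqref{abk2.5} the leading Bergman coefficient simplifies to $\boldsymbol{b}_0(x) = \Id_E$. Applying Theorem \ref{bkt2.1} with one more order of precision gives, uniformly in any local $\cC^\ell$-norm,
\[
P_p(x,x) = p^n\Id_E + p^{n-1}\boldsymbol{b}_1(x) + O(p^{n-2}),
\]
from which inversion yields
\[
Q_p(x) = p^{-n}\Id_E - p^{-n-1}\boldsymbol{b}_1(x) + O(p^{-n-2}).
\]
The decisive feature, absent in the general case, is that the leading term $p^{-n}\Id_E$ of $Q_p$ is a \emph{constant} matrix. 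Consequently $\partial Q_p$, $\bar\partial Q_p$ and $\bar\partial\partial Q_p$ are all $O(p^{-n-1})$ in $\cC^\ell$, an improvement by one power of $p$ over \eqref{eq:3.14Oct24}.

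Next I insert these sharper bounds into the four summands defining $A_p$ in the proof of Theorem \ref{T:Tian2}. Using $Q_p^{-1} = O(p^n)$ and $(H^E)^{-1}\partial H^E = O(1)$ in $\cC^\ell$, each summand is $O(p^{-1})$, giving
\[
\|A_p\|_{\cC^\ell(X)} \leq C''_\ell\,p^{-1}
\]
for every $\ell\in\N$ and suitable $C''_\ell > 0$. Finally, since $c_k$ applied to an $r\times r$ matrix of $2$-forms is a polynomial of total degree $k$ in the entries and $R^{L^p\otimes E} = pR^L\,\Id_E + R^E = O(p)$ in $\cC^\ell$, the difference
\[
c_k\!\Big(\tfrac{i}{2\pi}(R^{L^p\otimes E} + A_p)\Big) - c_k(L^p\otimes E, h^{L^p\otimes E})
\]
expands into monomials each containing at least one $A_p$ factor; the dominant one has $k-1$ factors of $R^{L^p\otimes E}$ and a single $A_p$, giving $O(p^{k-1})\cdot O(p^{-1}) = O(p^{k-2})$ in $\cC^\ell(X)$. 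Combined with the identity \eqref{e:id3} this yields \eqref{e:3.2.28}.

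The main obstacle is the careful bookkeeping in the second step. One has to verify that the $\cC^\ell$-uniformity of the Bergman expansion in Theorem \ref{bkt2.1} survives both the inversion $Q_p = P_p(x,x)^{-1}$ and the subsequent differentiations, and that no summand of $A_p$ stubbornly remains of order $1$—in particular the cross-terms involving $(H^E)^{-1}\partial H^E$, which do not themselves improve and so must be absorbed entirely by the improved derivative bounds on $Q_p$.
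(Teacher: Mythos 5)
Your proposal is correct and follows essentially the same route as the paper: the prequantum hypothesis makes $\boldsymbol{b}_0\equiv\Id_E$ constant, so $\partial Q_p,\bar\partial Q_p,\bar\partial\partial Q_p$ all gain a factor $p^{-1}$ over \eqref{eq:3.14Oct24}, whence $\|A_p\|_{\cC^\ell}=O(p^{-1})$ and the Chern-form comparison \eqref{e:Tian-est} improves to $O(p^{k-2})$. The $\cC^\ell$-uniformity issue you flag is indeed covered by the fact that the Bergman expansion of Theorem \ref{bkt2.1} already holds in every $\cC^\ell$-norm, exactly as the paper uses it.
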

\begin{proof}
Since $\omega=c_{1}(L,h^{L})$ we have that 
$b_0\equiv 1$ on $X$ (see \eqref{eq:3.12Oct24}). In the setting of the 
proof of Theorem \ref{T:Tian2} near $x_0\in X$, 
instead of \eqref{eq:3.14Oct24} we now have 
\[
\partial Q_p(x)=O(p^{-n-1}),\;
\bar{\partial} Q_p(x)= O(p^{-n-1}),\; 
\partial \bar{\partial} Q_p(x)= O(p^{-n-1}).
\]
So the upper bounds in \eqref{e:Ap} and respectively \eqref{e:Tian-est}
become
\[\left\|A_p\right\|_{\cC^\ell(\mathrm{near\;}x_{0})}\leq
\frac{C_\ell}{p}\;,\,\;
\left\|c_k\Big(\frac{i}{2\pi}(R^{L^{p}\otimes E}+A_p)\Big)-
c_{k}(L^{p}\otimes E, h^{L^{p}\otimes 
E})\right\|_{\cC^\ell(\mathrm{near\;}x_{0})}
\leq C'_\ell\,p^{k-2}.\]
This directly implies the estimate \eqref{e:3.2.28}.
\end{proof}
As a consequence of Theorem \ref{T:Tian3},
we compute the second-order approximation of the pullback of 
the $k$-th Chern form.
In the semiclassical limit, we retrieve the first Chern form 
$c_1(E,h^E)$ of $(E,h^E)$. This proves the semiclassical asymptotics 
\eqref{e:Tian1} and completes the proof of Theorem \ref{T:Tian}.
\begin{Corollary}
Let $(X,\omega)$ be a compact K\"ahler manifold of dimension $n$, 
$(L,h^L)$ be a positive line bundle on $X$, and $(E,h^E)$ be a Hermitian 
holomorphic vector bundle on $X$ of rank $r\leq n$. 
Assume that $\omega=c_{1}(L,h^{L})$. 
Then we have in the $\cC^\infty$ topology on $X$,
\begin{equation}
\Phi_p^\star(c_k(\cT^\star,h^{\cT^\star}))=
p^k\binom{r}{k}\omega^{k}
+p^{k-1}\binom{r-1}{k-1}c_1(E,h^E)\wedge \omega^{k-1}+
O(p^{k-2}).
\label{eq:3.28MAR25}
\end{equation}

\end{Corollary}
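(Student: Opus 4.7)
The plan is to combine the improved estimate in Theorem \ref{T:Tian3} with the explicit algebraic expansion of $c_k(L^p\otimes E, h^{L^p\otimes E})$ coming from the tensor product formula \eqref{eq:3.2.4bis}. The content of the corollary is essentially an algebraic reorganization of that formula, with the analytic input already packaged into Theorem \ref{T:Tian3}.

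First I would write out \eqref{eq:3.2.4bis} for $L^p\otimes E$. Since $c_1(L^p,h^{L^p})=p\,c_1(L,h^L)=p\omega$ under the hypothesis $\omega=c_1(L,h^L)$, we get
\begin{equation*}
c_k(L^p\otimes E, h^{L^p\otimes E})
=\sum_{j=0}^{k}\binom{r-j}{k-j}\,p^{k-j}\,c_j(E,h^E)\wedge \omega^{k-j}.
\end{equation*}
The $j=0$ term contributes $p^k\binom{r}{k}\omega^k$, the $j=1$ term contributes $p^{k-1}\binom{r-1}{k-1}c_1(E,h^E)\wedge\omega^{k-1}$, and all remaining terms are of the form $p^{k-j}$ with $j\geq 2$, hence bounded in every $\mathcal{C}^\ell$-norm by a constant times $p^{k-2}$, since the forms $c_j(E,h^E)\wedge\omega^{k-j}$ are smooth and fixed (independent of $p$).

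Next I would invoke Theorem \ref{T:Tian3}, which under the polarization assumption $\omega=c_1(L,h^L)$ gives, in every $\mathcal{C}^\ell$-norm on $X$,
\begin{equation*}
\Phi_p^\star(c_k(\cT^\star,h^{\cT^\star}))
=c_k(L^p\otimes E, h^{L^p\otimes E})+O(p^{k-2}).
\end{equation*}
Adding this identity to the expansion of $c_k(L^p\otimes E, h^{L^p\otimes E})$ above and absorbing all terms of order at most $p^{k-2}$ into the error yields \eqref{eq:3.28MAR25} in the $\mathcal{C}^\infty$ topology.

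There is no real obstacle here: the analytic work—namely the Bergman-kernel-based improvement from $O(p^{k-1})$ in Theorem \ref{T:Tian2} to $O(p^{k-2})$ in Theorem \ref{T:Tian3}—has already been carried out. The only thing to check carefully is that the term $p^{k-1}\binom{r-1}{k-1}c_1(E,h^E)\wedge\omega^{k-1}$ survives the error: this works precisely because Theorem \ref{T:Tian3} loses one power of $p$ beyond Theorem \ref{T:Tian2}, matching the gap between the leading and subleading terms in the tensor product expansion. Dividing \eqref{eq:3.28MAR25} by $p^k$ recovers \eqref{e:Tian1}, completing the proof of Theorem \ref{T:Tian}.
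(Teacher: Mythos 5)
Your proposal is correct and coincides with the paper's argument: the paper also derives \eqref{eq:3.28MAR25} by combining the improved estimate of Theorem \ref{T:Tian3} with the tensor product expansion \eqref{eq:3.2.4bis}, keeping the $j=0$ and $j=1$ terms and absorbing the rest into $O(p^{k-2})$.
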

\begin{proof}
This follows directly from Theorem \ref{T:Tian3} and \eqref{eq:3.2.4bis}.
\end{proof}

Consequently, if $\omega=c_{1}(L,h^{L})$
we obtain the following refinement of 
\eqref{eq:3.2.26} and generalization of \eqref{eq:3.2.27},
\begin{equation}\label{eq:3.2.30}
\Phi_{p}^\star(\omega_{\FS})=pr c_1(L,h^L)
+c_1(E,h^E)+
O\left(\frac{1}{p}\right).
\end{equation}
If $(E,h^E)$ is trivial this gives 
\begin{equation}\label{eq:3.2.34}
\frac{1}{p}\,\Phi_{p}^\star(\omega_{\FS})-r c_1(L,h^L)=
O\left(\frac{1}{p^2}\right).
\end{equation}

\section{Kodaira maps as meromorphic transforms}\label{S:MTG}
In this section, we show that the Kodaira map can be interpreted as 
a meromorphic transform of codimension $n-r$ in the sense of Dinh-Sibony \cite{DS06}.
Furthermore, we establish that the intermediate degrees of this transform 
can be expressed in terms of the Chern classes of the vector bundle $E$.

Let $(X,\omega),\,(E,h^E),\,V,\,N$ verify assumptions (A)-(C), and $\Phi_E:X\to\G(r,V^\star)$ be the Kodaira map defined in \eqref{e:Kod2}.
It follows from (B) that $\dim Z_s\geq n-r$ for every $s\in\P V$. 
Moreover, Bertini's theorem for vector bundles \cite[Theorem 1.2]{MZ23} 
implies that $Z_s$ is a complex submanifold of $X$ of dimension $n-r$
for all $s$ outside a proper analytic subset of $\P V$. Hence the current 
of integration $[Z_s]=[s=0]$ along $Z_s$ is a well defined 
positive closed current of bidegree $(r,r)$ for such $s$. 
Locally, if $U\subset X$ is an open set such that $E|_U$
is trivial then $s$ is represented by an $r$-tuple of 
holomorphic functions $(f_1,\ldots , f_r)$ on $U$ and 
\[[s=0]=dd^c\log|f_1|\wedge\ldots\wedge dd^c\log|f_r|.\]
Consider the probability space $(\P V,\omega_\FS^N)$ 
and the current valued random variable $$\P V\ni s\longmapsto[Z_s].$$
The expectation of this random variable, also called the 
\textit{expected zero current} of the ensemble $(\P V,\omega_\FS^N)$, 
is the current $\E[Z_s]=\E_1(\omega_\FS^N)$ defined in \eqref{e:expk}. 
Namely,
\begin{equation}\label{e:exp1}
\langle \E[Z_s],\phi \rangle=\int_{\P V}\langle[s=0],\phi\rangle\,\omega_\FS^N\,,
\end{equation}
where $\phi$ is a smooth $(n-r,n-r)$ form on $X$. 
The following result of Sun \cite[Theorem 4.2]{Sun} provides a formula 
for $\mathbb{E}[Z_s]$ in terms of the $r$-th Chern form of 
$(\mathcal{T}^\star,h^{\mathcal{T}^\star})$.
A novel proof is presented in Section \ref{S:expectation}.
\begin{Theorem}[\cite{Sun}]\label{T:Sun}
If $(X,\omega),\,(E,h^E)$ verify (A)-(C) and $\Phi_E$ is defined in \eqref{e:Kod2} then
\[\E[Z_s]=\Phi_E^\star\big(c_r(\cT^\star, h^{\cT^\star})\big).\]
\end{Theorem}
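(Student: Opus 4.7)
The plan is to identify $\E[Z_s]$ with the top Chern form of $E$ equipped with a Bergman-induced Hermitian metric, and then match that form against $\Phi_E^\star c_r(\cT^\star,h^{\cT^\star})$ by naturality, using the Hermitian bundle isomorphism analogous to \eqref{e:isostar}--\eqref{e:isostarP}.

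First I would reduce to a Gaussian ensemble. Since $Z_s$ depends only on the projective class of $s$ and the Fubini-Study measure $\omega_\FS^N$ on $\P V$ is the push-forward of the rotation-invariant Gaussian on $V$ determined by the inner product \eqref{e:ip2}, it suffices to prove the identity with $\omega_\FS^N$ replaced by this Gaussian measure $\nu$.

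Next comes the core Kac-Rice step. Fix $x_0\in X$ and a local holomorphic frame $t_1,\ldots,t_r$ of $E$ near $x_0$. Writing $s=\sum_j f_j(s;\cdot)t_j$ near $x_0$, the linear functionals $f_j(\cdot;x)\colon V\to\C$ form a centered $\C^r$-valued Gaussian vector whose covariance matrix at $x$ is the matrix of the Bergman kernel $P_E(x,x)\in\End(E_x)$ in the frame $t_j$. Applying the local Poincar\'e-Lelong representation $[Z_s]=dd^c\log|f_1|\wedge\cdots\wedge dd^c\log|f_r|$ and a vector-valued Kac-Rice calculation one obtains
\begin{equation*}
\E_\nu[Z_s]=c_r(E,h^{E,\mathrm{ind}}),\qquad h^{E,\mathrm{ind}}(x):=h^E(x)\circ P_E(x,x)^{-1}.
\end{equation*}
Finally, by (C) and the isomorphism $\Psi\colon\Phi_E^\star\cT^\star\to E$ (analog of \eqref{e:isostar}) together with the metric identification analogous to \eqref{e:isostarP}, we have an isomorphism $(E,h^{E,\mathrm{ind}})\cong(\Phi_E^\star\cT^\star,h^{\Phi_E^\star\cT^\star})$ of Hermitian holomorphic vector bundles, hence by naturality of Chern forms,
\begin{equation*}
c_r(E,h^{E,\mathrm{ind}})=\Phi_E^\star c_r(\cT^\star,h^{\cT^\star}),
\end{equation*}
which combined with the previous display yields the theorem.

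The principal difficulty is the Kac-Rice computation, i.e., checking that the expectation of the vector-valued Lelong current of a Gaussian section is exactly the Chern-Weil representative of the covariance-induced metric. One clean way is to diagonalize the covariance pointwise by an adapted unitary change of holomorphic frame, reducing to $r$ independent standard complex Gaussians; the universal $\delta$-contribution at the zero of this vector, combined with the curvature of the diagonalizing change of frame, then reconstitutes the Chern-Weil expression for $c_r(E,h^{E,\mathrm{ind}})$. The same computation extends with minimal modifications to $k$-tuples of sections, and therefore also yields Theorem \ref{T:expdeg}.
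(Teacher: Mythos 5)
Your overall reduction is reasonable, and the two outer steps are sound: replacing $\omega_\FS^N$ by the Gaussian measure is harmless, and the identification $c_r(E,h^{E,\mathrm{ind}})=\Phi_E^\star c_r(\cT^\star,h^{\cT^\star})$ does follow from the analogue of \eqref{e:isostar}--\eqref{e:isostarP}, which holds under hypothesis (C) (though you should say why the Ma--Marinescu statement, formulated for $L^p\otimes E$, extends to a general globally generated $E$). The problem is that your middle step --- ``a vector-valued Kac--Rice calculation gives $\E_\nu[Z_s]=c_r(E,h^{E,\mathrm{ind}})$'' --- is not a proof step at all: it \emph{is} the theorem, restated through \eqref{e:isostarP}, and the sketch you offer for it does not work as described. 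Pointwise unitary diagonalization of the covariance $P_E(x,x)$ cannot in general be realized by a \emph{holomorphic} change of frame; after a merely smooth unitary gauge change the coefficients $f_j$ are no longer holomorphic, so the representation $[Z_s]=dd^c\log|f_1|\wedge\cdots\wedge dd^c\log|f_r|$ on which your computation rests is lost. Diagonalizing only at a single point $x_0$ does not help either: for $r\ge 2$ the expected current is not determined by the pointwise law of $s(x_0)$; it involves the joint law of the field and its derivatives, and the wedge of Lelong potentials is not linear in each potential, so there is no analogue of the codimension-one trick of averaging $\log\|s\|$. In addition, you never justify that the expectation is well defined (measurability and a uniform mass bound, handled in the paper by Theorem \ref{T:expcoh} via Poincar\'e duality), nor the exchange of expectation with the wedge product of currents, nor that the exceptional set of sections with degenerate zero locus contributes nothing. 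A genuine Kac--Rice proof would require computing $\E\big[\lvert\det\rvert^2\text{-type functionals of }\nabla s\ \big|\ s(x)=0\big]$ with the full (non-diagonal) Bergman covariance and matching the result with the Chern--Weil form $\det\big(\tfrac{i}{2\pi}R^{E,\mathrm{ind}}\big)$; none of that is carried out, and the phrase ``the universal $\delta$-contribution \ldots reconstitutes the Chern--Weil expression'' is precisely the part that needs proof. Your parenthetical claim that the same argument yields Theorem \ref{T:expdeg} for all $k$ inherits the same gap.

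For comparison, the paper avoids any Kac--Rice computation: it first shows (Theorem \ref{T:expcoh}) that $\E_k$ is a well-defined closed positive current in the class $c_{r+1-k}(E)$; then it computes the expectation \emph{on the Grassmannian itself} (Theorem \ref{T:expG}) purely by ${\rm U}(W)$-invariance together with the fact that on the symmetric space $\G(r,W)$ invariant forms are harmonic, so each class contains a unique invariant representative; finally it transfers this to $X$ through the Kodaira embedding, where the delicate point is a current-theoretic inequality (Lemma \ref{L:ineq}, proved by slicing $[\Phi(X)]=(dd^cv)^{m-n}$, regularizing $v$, and applying Fatou), upgraded to equality by the cohomological identity from Theorem \ref{T:expcoh}. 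If you want to pursue your route, the honest way is to prove your displayed identity either by the conditioning/induction scheme of Shiffman--Zelditch adapted to correlated components, or by first establishing the Grassmannian case and pulling back --- which is essentially the paper's (and Sun's) strategy.
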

We consider now the Kodaira map $\Phi_E$ as a meromorphic transform in the sense of Dinh-Sibony \cite{DS06}. Namely, it is the meromorphic transform $F_E:X\longrightarrow\P V$ with graph
\begin{equation}\label{e:MTGdef}
\Gamma_E=\{(x,s)\in X\times\P V:\,s(x)=0\}\subset X\times\P V.
\end{equation}
Let $\pi_1:X\times\P V\to X$, $\pi_2:X\times\P V\to\P V$ be the canonical projections.

\begin{Proposition}\label{P:MTG}
$F_E$ is a meromorphic transform of codimension $n-r$.
\end{Proposition}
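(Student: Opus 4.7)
The plan is to verify the three defining properties of a meromorphic transform in the sense of Dinh–Sibony \cite{DS06}: that the graph $\Gamma_E\subset X\times \P V$ is a pure-dimensional irreducible analytic subset, that the first projection $\pi_1|_{\Gamma_E}$ is surjective onto $X$, and that the generic fibers of the second projection $\pi_2|_{\Gamma_E}$ have pure dimension equal to the claimed codimension $n-r$.

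First I would establish analyticity of $\Gamma_E$. Cover $X$ by open sets $U_\alpha$ over which $E$ admits a holomorphic frame $\{e_1^\alpha,\ldots,e_r^\alpha\}$, so that every $s\in V$ reads on $U_\alpha$ as $s(x)=\sum_{j=1}^r f_j^\alpha(x,s)\,e_j^\alpha(x)$, with the $f_j^\alpha$ holomorphic in $x$ and $\C$-linear in $s$. The vanishing conditions $f_1^\alpha=\cdots=f_r^\alpha=0$ are homogeneous of degree one in $s$, hence define an analytic subset of $U_\alpha\times\P V$ which coincides with $\Gamma_E\cap(U_\alpha\times\P V)$; since different frames differ by invertible transition matrices of $E$, these local analytic subsets glue into the global analytic set $\Gamma_E$.

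Next I would compute the fibers of $\pi_1$. For $x\in X$, assumption (C) says that the evaluation map $\mathrm{ev}_x:V\to E_x$, $s\mapsto s(x)$, is surjective, so its kernel has codimension $r$ in $V$ and $\pi_1^{-1}(x)$ is a projective linear subspace of $\{x\}\times\P V$ of pure dimension $N-r$. Hence $\pi_1$ is surjective with equidimensional fibers, $\Gamma_E$ is pure-dimensional of dimension $n+N-r$, and the connectedness of $X$ together with the irreducibility of the fibers yields irreducibility of $\Gamma_E$.

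Finally I would analyze $\pi_2$. Assumption (B) guarantees $Z_s\neq\varnothing$ for every $s$, so $\pi_2$ is surjective and each fiber is canonically identified with $Z_s$. The fiber dimension inequality applied to the surjection $\pi_2:\Gamma_E\to\P V$ forces the generic fiber to have dimension at least $\dim\Gamma_E-N=n-r$; conversely, Bertini's theorem for vector bundles \cite[Theorem 1.2]{MZ23} (already invoked at the beginning of this section) produces a proper analytic subset $\Sigma\subsetneq\P V$ outside of which $Z_s$ is a smooth submanifold of $X$ of pure dimension $n-r$. Combining the two bounds, $\pi_2|_{\Gamma_E}$ is a surjective holomorphic map whose generic fiber has pure dimension exactly $n-r$, which matches the Dinh–Sibony notion of a meromorphic transform of codimension $n-r$. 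The main (and only nontrivial) ingredient is Bertini for vector bundles, which supplies the matching upper bound on $\dim Z_s$; everything else is a formal consequence of (A)–(C) and of the linear dependence of sections on their coefficients.
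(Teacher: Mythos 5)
Your argument is correct and follows essentially the same route as the paper: assumption (B) gives surjectivity of $\pi_2$, assumption (C) shows the fibers of $\pi_1$ are linear $\P^{N-r}$'s, whence $\Gamma_E$ is an irreducible analytic set of dimension $N+n-r$ and the transform has codimension $n-r$. The only difference is your final appeal to Bertini, which is superfluous: once $\dim\Gamma_E=N+n-r$ and $\pi_2|_{\Gamma_E}$ is surjective, the generic fiber automatically has dimension $n-r$, and in any case the Dinh--Sibony codimension is read off directly from $\dim\Gamma_E$.
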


\begin{proof} We have to check that the restrictions of the canonical projections to 
$\Gamma_E$ are surjective. It is clear by (B) that $\pi_2(\Gamma_E)=\P V$. By (C), 
the linear map $S\in V\to S(x)\in E_x$ has rank $r$ for every $x\in X$, so $\{s\in\P V:\,s(x)=0\}$ 
is a linear subspace of dimension $N-r$ of $\P V$. Hence $\pi_1(\Gamma_E)=X$ and 
$\Gamma_E$ is an irreducible analytic subset of $X\times\P V$ of dimension $N+n-r$.
\end{proof}

Let $I_2(F_E)=\{s\in\P V:\,\dim Z_s>n-r\}$ denote the second indeterminacy set of $F_E$. Then $\codim I_2(F_E)\geq2$. For $s\in\P V\setminus I_2(F_E)$, we denote by $[Z_s]=[s=0]$ the current of integration along $Z_s$. By \cite[Section 3.1]{DS06}, the pull-back of a current $T$ on $\P V$ of bidegree $(k,k)$, $N-r\leq k\leq N$,  is defined by 
\[F_E^\star(T)=(\pi_1)_\star\big(\pi_2^\star(T)\wedge[\Gamma_E]\big),\]
provided that $T$ is smooth, or $T$ is the current of integration over 
an analytic subset $H\subset\P V$ such that $\dim(H\cap I_2(F_E))\leq N-k-1$. 
Then $F_E^\star(T)$ is a current of bidegree $(k-N+r,k-N+r)$. 
Moreover, if $T$ is smooth then $F_E^\star(T)$ is given by a form 
with coefficients in $L^1(X)$. Similarly, if $S$ is a smooth $(k,k)$ 
form on $X$, where $n-r\leq k\leq n$, then the push-forward 
\[(F_E)_\star(S)=(\pi_2)_\star\big(\pi_1^\star(S)\wedge[\Gamma_E]\big)\]
is a well defined current of bidegree $(k-n+r,k-n+r)$ on $\P V$ and 
it is given by a form with coefficients in $L^1(\P V)$.
Let 
\begin{equation}\label{e:excep}
J=I_2(F_E)\cup\{s\in\P V:\,Z_s \text{ is not smooth}\}.
\end{equation}
By \cite[Theorem 1.2]{MZ23}, $J$ is an analytic subset of $\P V$. 
We infer that if $s\in\P V\setminus J$ then $Z_s$ is a complex submanifold 
of $X$ of dimension $n-r$ and 
\begin{equation}\label{e:Dirac}
F_E^\star(\delta_s)=[s=0].
\end{equation}
The intermediate degree of $F_E$ of order $k$, $N-r\leq k\leq N$, is defined by 
\begin{align}\label{e:intdeg}
\lambda_k(F_E)&=\int_XF_E^\star(\omega_\FS^k)\wedge\omega^{N+n-r-k}=
\int_{\P V}\omega_\FS^k\wedge(F_E)_\star(\omega^{N+n-r-k}) \\
&=\int_{\Gamma_E}\pi_1^\star(\omega^{N+n-r-k})
\wedge\pi_2^\star(\omega_\FS^k)\,, \nonumber
\end{align}
(see \cite[(3.1)]{DS06}). We show here that they can be expressed 
in terms of the Chern classes $c_{k-N+r}(E)$ of $E$.

\begin{Theorem}\label{T:MTGdeg}
Let $(X,\omega),\,(E,h^E)$ verify assumptions (A)-(C). 
If $F_E$ is the meromorphic transform defined in \eqref{e:MTGdef} then  
\begin{align}
\lambda_k(F_E)&=\int_Xc_{k-N+r}(E)\wedge\omega^{N+n-r-k},\;
N-r\leq k\leq N,\label{e:MTdeg} \\
F_E^\star(\omega_\FS^N)&=\Phi_E^\star\big(c_r(\cT^\star, h^{\cT^\star})\big).
\label{e:MTexp}
\end{align}
\end{Theorem}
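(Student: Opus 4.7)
\emph{Overall plan.} The plan is to identify the graph $\Gamma_E$ as a projective bundle over $X$, then to compute the intermediate degrees $\lambda_k(F_E)$ via the classical push-forward formula for powers of the tautological class together with Whitney multiplicativity, and finally to derive \eqref{e:MTexp} by combining the same geometric picture with Theorem \ref{T:Sun}.

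\emph{Geometric setup.} Assumption (C) makes the evaluation map $\mathrm{ev}_x : V \to E_x$ surjective for every $x \in X$, so its kernel defines a holomorphic sub-bundle $K$ of the trivial bundle $X\times V$ of rank $N+1-r$, fitting into a short exact sequence
$$0 \longrightarrow K \longrightarrow X\times V \longrightarrow E \longrightarrow 0.$$
The projectivization $\P(K)$ sits naturally inside $X\times \P V = \P(X\times V)$, and the definition of $\Gamma_E$ yields $\Gamma_E = \P(K)$, with $\pi_1$ being the bundle projection (a locally trivial $\P^{N-r}$-bundle). A direct inspection of fibers shows that the restriction of $\pi_2^\star\mathcal{O}_{\P V}(-1)$ to $\P(K)$ coincides with the tautological line bundle $\mathcal{O}_{\P(K)}(-1)$, so that
$$\pi_2^\star [\omega_\FS] = \zeta := c_1\bigl(\mathcal{O}_{\P(K)}(1)\bigr) \quad \text{in } H^{1,1}(\Gamma_E,\R).$$

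\emph{Proof of \eqref{e:MTdeg}.} Starting from \eqref{e:intdeg} and pushing forward by $\pi_1$, I would write
$$\lambda_k(F_E) = \int_X \omega^{N+n-r-k}\wedge (\pi_1)_\star \zeta^k.$$
The standard push-forward of powers of the tautological class in a rank $(N+1-r)$ projective bundle gives $(\pi_1)_\star \zeta^k = s_{k-N+r}(K)$ in $H^\bullet(X,\R)$, where $s_j$ denotes the $j$-th Segre class. Whitney multiplicativity applied to the sequence above yields $c(K)\,c(E) = c(X\times V) = 1$, hence $s(K) = c(K)^{-1} = c(E)$ and in particular $s_{k-N+r}(K) = c_{k-N+r}(E)$. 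Substituting delivers \eqref{e:MTdeg}.

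\emph{Proof of \eqref{e:MTexp} and main obstacle.} For any smooth test form $\phi$ of bidegree $(n-r,n-r)$ on $X$, the plan is to slice the defining integral $\int_{\Gamma_E} \pi_1^\star \phi \wedge \pi_2^\star \omega_\FS^N$ along $\pi_2$. Using that $\pi_2^{-1}(s) \cong Z_s$ for $s$ outside the analytic set $J$ of \eqref{e:excep} and that $\omega_\FS^N$ charges no proper analytic subset of $\P V$, this gives
$$\langle F_E^\star(\omega_\FS^N),\phi\rangle = \int_{\P V}\Bigl(\int_{Z_s}\phi\Bigr)\,\omega_\FS^N(s) = \langle \E[Z_s],\phi\rangle,$$
and Theorem \ref{T:Sun} identifies the right-hand side with $\langle \Phi_E^\star c_r(\cT^\star, h^{\cT^\star}),\phi\rangle$. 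The main difficulty here is precisely the passage, in this last step, from the cohomological identity $(\pi_1)_\star\zeta^N = c_r(E)$ (which is delivered for free by the $k=N$ case of the previous step) to an equality of currents on $X$; this is the content of Theorem \ref{T:Sun}, so the present theorem is a clean synthesis of the projective bundle calculation and that result.
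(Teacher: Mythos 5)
Your proposal is correct, but it is a hybrid: the derivation of \eqref{e:MTexp} is essentially the paper's own argument (slicing $\int_{\Gamma_E}\pi_1^\star\phi\wedge\pi_2^\star\omega_\FS^N$ along $\pi_2$ over the complement of the analytic set $J$, then invoking Theorem \ref{T:Sun}), while your proof of \eqref{e:MTdeg} follows a genuinely different route. The paper computes the intermediate degrees by replacing $\omega_\FS^k$ cohomologically with the current of integration along a generic linear subspace $H_k\subset\P V$ of dimension $N-k$ chosen in good position with respect to $I_2(F_E)$, identifying $F_E^{-1}(H_k)$ with the degeneracy set $D_{N-k+1}(s_0,\ldots,s_{N-k})$ via \eqref{e:degci}, and then quoting the Griffiths--Harris statement that the Poincar\'e dual of this degeneracy locus is $c_{k-N+r}(E)$. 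You instead exploit that assumption (C) makes $K=\ker(X\times V\to E)$ a holomorphic subbundle of rank $N+1-r$, identify $\Gamma_E\cong\P(K)$ (so $\Gamma_E$ is smooth and $\pi_1$ is a $\P^{N-r}$-bundle), note $\pi_2^\star\mathcal{O}_{\P V}(-1)|_{\Gamma_E}=\mathcal{O}_{\P(K)}(-1)$, and conclude by the Segre-class push-forward $(\pi_1)_\star\zeta^k=s_{k-N+r}(K)$ together with Whitney, $s(K)=c(K)^{-1}=c(E)$; I checked the index bookkeeping ($\operatorname{rank}K=N+1-r$, so $(\pi_1)_\star\zeta^k=s_{k-N+r}(K)$, with the convention $s(K)c(K)=1$) and it is consistent. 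What each approach buys: yours is purely cohomological on a smooth total space, avoids choosing $H_k$ generically and pulling back currents of integration through the meromorphic transform, and makes the case $k=N$ literally a special case of the same formula; the paper's version uses only the degeneracy-locus duality already needed elsewhere (Theorem \ref{T:expcoh}) and does not require introducing the bundle $K$, and its argument would survive in settings where $\Gamma_E$ is not a priori a smooth fibration. Both ultimately rest on equivalent classical inputs (the Griffiths--Harris duality is itself proved by a Segre/Whitney-type computation), and your final synthesis for \eqref{e:MTexp} correctly locates the only genuinely analytic step in Theorem \ref{T:Sun}.
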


\begin{proof}
Let $\phi$ be a smooth $(n-r,n-r)$ form on $X$. We have that 
\[\big\langle F_E^\star(\omega_\FS^N),\phi\big\rangle=
\int_{\Gamma_E}\pi_1^\star(\phi)\wedge\pi_2^\star(\omega_\FS^N)=
\int_{\P V}(F_E)_\star(\phi)\wedge\omega_\FS^N,\]
where $(F_E)_\star(\phi)\in L^1(\P V)$. If $s\not\in J$, where $J$ 
is defined in \eqref{e:excep}, then 
$\pi_2:\pi_2^{-1}(U)\cap\Gamma_E\to U$ 
is a submersion  for some neighborhood $U\subset\P V$ of $s$. We infer that  
\[(F_E)_\star(\phi)(s)=
\big(\pi_2|_{\Gamma_E}\big)_\star
\big(\big(\pi_1|_{\Gamma_E}\big)^\star\phi\big)(s)=
\int_{\pi_2^{-1}(s)\cap\Gamma_E}
\big(\pi_1|_{\Gamma_E}\big)^\star\phi=
\int_{Z_s}\phi=\langle[s=0],\phi\rangle,\;s\not\in J,\]
where the third equality follows since 
$\pi_1|_{\Gamma_E}:\pi_2^{-1}(s)\cap\Gamma_E=
Z_s\times\{s\}\to Z_s$ is a biholomorphism. This yields that 
\begin{equation}\label{e:MTexp1}
\big\langle F_E^\star(\omega_\FS^N),\phi\big\rangle=
\int_{\P V}\langle[s=0],\phi\rangle\,\omega_\FS^N=
\langle \E[Z_s],\phi \rangle,
\end{equation}
where $\E[Z_s]$ is the expectation current from \eqref{e:exp1}, 
and \eqref{e:MTexp} follows from Theorem \ref{T:Sun}. 
The Chern class $c_r(E)$ is the Poincar\'e dual of $Z_s$ 
for generic $s\in\P V$ (see e.g.\ \cite[p.\ 413]{GH94}), so 
\[\int_{Z_s}\omega^{n-r}=\int_Xc_r(E)\wedge\omega^{n-r}.\]
Hence \eqref{e:MTexp1} applied with $\phi=\omega^{n-r}$ 
yields \eqref{e:MTdeg} for $k=N$.

To prove \eqref{e:MTdeg} for $N-r\leq k\leq N$, 
we use a cohomological argument. Let $H_k\subset\P V$ 
be a linear subspace of dimension $N-k$ such that 
$\dim(H_k\cap I_2(F_E))\leq N-k-1$, and $s_0,\ldots,s_{N-k}\in\P V$ 
be linearly independent points such that 
\[H_k=\{a_0s_0+\ldots+a_{N-k}s_{N-k}:\,(a_0,\ldots,a_{N-k})
\in\C^{N-k+1}\setminus\{0\}\}.\]
If $[H_k]$ denotes the current of integration along $H_k$ 
then $\omega_\FS^k\sim[H_k]$, so by \eqref{e:intdeg} 
\[\lambda_k(F_E)=\int_XF_E^\star(\omega_\FS^k)\wedge\omega^{N+n-r-k}=
\int_XF_E^\star([H_k])\wedge\omega^{N+n-r-k}.\]

We have that $F_E^\star([H_k])$ is the current of integration 
along $F_E^{-1}(H_k)$, and $F_E^{-1}(H_k)$ is the degeneracy set 
$D_{N-k+1}(s_0,\ldots,s_{N-k})$, i.e.\ the set of points $x\in X$ 
where $s_0(x),\ldots,s_{N-k}(x)$ are linearly dependent. Indeed, 
\begin{equation}\label{e:degci}
F_E^{-1}(H_k)=\pi_1\big(\pi_2^{-1}(H_k)\cap\Gamma_E\big)=
\bigcup_{s\in H_k}Z_s=\{x\in X:\,(s_0\wedge\ldots\wedge s_{N-k})(x)=0\}.
\end{equation}
By \cite[p.\ 413]{GH94}, the Poincar\'e dual of 
$D_{N-k+1}(s_0,\ldots,s_{N-k})$ is $c_{k-N+r}(E)$. Therefore
\[\lambda_k(F_E)=\int_XF_E^\star([H_k])\wedge\omega^{N+n-r-k}=
\int_{F_E^{-1}(H_k)}\omega^{N+n-r-k}=
\int_Xc_{k-N+r}(E)\wedge\omega^{N+n-r-k}.\]
\end{proof}

\section{Expectation currents of degeneracy sets}\label{S:expectation}

In this section, we investigate the expectation currents $\E_k(\mu_k)$ 
defined in \eqref{e:expk} and we establish Theorems \ref{T:expcoh} and 
\ref{T:expdeg}.
When $k=1$, our result recover those obtained by Sun \cite{Sun} 
(as presented in Theorem \ref{T:Sun}), 
provided that the Kodaira map $\Phi_E$ is an embedding. 

\subsection{Cohomology class of expectation currents}\label{SS:exp}

In this subsection, we provide the proof of Theorem \ref{T:expcoh}.

\begin{proof}[Proof of Theorem \ref{T:expcoh}]
For $s=(s_1,\ldots,s_k)\in(\P V)^k$ let $H(s)\subset\P V$
be the linear subspace spanned by $s_1,\ldots,s_k$.
If $s$ is generic, we have that $H(s)$ has dimension $k-1$ and 
$\dim(H(s)\cap I_2(F_E))\leq k-2$, where $I_2(F_E)$
is the second indeterminacy set of the meromorphic transform
$F_E$ defined in \eqref{e:MTGdef}. By \eqref{e:degci}
we infer that $D_k(s_1,\ldots,s_k)=F_E^{-1}(H(s))$ and 
\[[s_1\wedge\ldots\wedge s_k=0]=F_E^\star([H(s)])\] 
is a positive closed current of bidegree $(r+1-k,r+1-k)$ on $X$. 
If $S_0,\ldots,S_N$ is a basis of $V$, we have a bijective map 
\[\P^N\ni[x_0:\ldots:x_N]\longmapsto x_0S_0+\ldots+x_NS_N\in\P V.\]
Let $U\subset X$ be an open set such that $E|_U$ 
has a holomorphic frame $e_1,\ldots,e_r$, and write 
\[S_j=\sum_{\ell=1}^rh_{j\ell}e_\ell, \text{ where } 
h_{j\ell}\in\cO_X(U).\]
If $s=(s_1,\ldots,s_k)\in(\P V)^k$ there exist 
$a_m=[a_{m0}:\ldots:a_{mN}]\in\P^N$ such that 
\[s_m=\sum_{j=0}^Na_{mj}S_j=
\sum_{\ell=1}^r\left(\sum_{j=0}^Na_{mj}h_{j\ell}\right)e_\ell,\;
1\leq m\leq k.\]
Therefore $(s_1\wedge\ldots\wedge s_k)(x)=0$ 
for some $x\in U$ if and only if $\rank A(s,x)\leq k-1$, where 
\[A(s,x)=A(a_1,\ldots,a_k;x)=
\left[\sum_{j=0}^Na_{mj}h_{j\ell}(x)\right]_{1\leq m\leq k,1\leq\ell\leq r}\] 
is a $k\times r$ matrix of holomorphic functions on $(\P^N)^k\times U$. 
Let $g_j$ be the determinant of the $k\times k$ minor of $A$ consisting 
of its first $k-1$ columns together with the column $j+k-1$, $1\leq j\leq r+1-k$. 
Then $g_j$ is a holomorphic function on $(\P^N)^k\times U$ 
and we may assume that for generic $s\in(\P V)^k\equiv(\P^N)^k$ we have 
\begin{equation}\label{e:degwedge}
[s_1\wedge\ldots\wedge s_k=0]=
dd^c\log|g_1(s,\cdot)|\wedge\ldots\wedge dd^c\log|g_{r+1-k}(s,\cdot)| 
\text{ on } U.
\end{equation}

Let $\theta$ be a fixed smooth $(n+k-r-1,n+k-r-1)$ 
form supported in $U$. Then 
\[f_\varepsilon(s):=\frac{1}{2^{r+1-k}}\int_Udd^c\log(|g_1(s,\cdot)|^2+
\varepsilon)\wedge\ldots\wedge dd^c\log(|g_{r+1-k}(s,\cdot)|^2+
\varepsilon)\wedge\theta,\;\varepsilon>0,\]
are smooth functions on $(\P V)^k$. Moreover, 
$f_\varepsilon(s)\to\langle[s_1\wedge\ldots\wedge s_k=0],\theta\rangle$ as 
$\varepsilon\searrow0$, for generic $s\in(\P V)^k$. 
Using this we deduce that the function 
\[f(s):=\langle[s_1\wedge\ldots\wedge s_k=0],\phi\rangle,\;
s=(s_1,\ldots,s_k)\in(\P V)^k,\] 
is Borel measurable, where $\phi$ is a fixed smooth $(n+k-r-1,n+k-r-1)$ 
form on $X$. We now show that $f$ is bounded. Indeed, we can assume 
that $\phi$ is real, so there exists $C_\phi>0$ such that 
$$-C_\phi\,\omega^{n+k-r-1}\leq\phi\leq C_\phi\,\omega^{n+k-r-1}.$$ 
Since the Poincar\'e dual of $D_k(s_1,\ldots,s_k)$ is 
$c_{r+1-k}(E)$ \cite[p.\ 413]{GH94}, we obtain, 
for generic $s\in(\P V)^k$, that 
\[|f(s)|\leq C_\phi\langle[s_1\wedge\ldots\wedge s_k=0],
\omega^{n+k-r-1}\rangle=
C_\phi\int_Xc_{r+1-k}(E)\wedge\omega^{n+k-r-1}.\]

We conclude that $\E_k(\mu_k)$ is a well-defined current of bidegree $(r+1-k,r+1-k)$ 
on $X$, and it is clearly positive and closed. 
Moreover, if $\phi$ is a closed smooth $(n+k-r-1,n+k-r-1)$ form on $X$ then 
\begin{align*}
\langle \E_k(\mu_k),\phi \rangle&=
\int_{(\P V)^k}\langle[s_1\wedge\ldots\wedge s_k=0],\phi\rangle\,d\mu_k\\
&=\int_{(\P V)^k}\left(\int_Xc_{r+1-k}(E)\wedge\phi\right)d\mu_k
=\int_Xc_{r+1-k}(E)\wedge\phi.
\end{align*}
This shows that $\E_k(\mu_k)$ belongs to the Chern class $c_{r+1-k}(E)$.
\end{proof}

\begin{Remark}\label{R:expec}
We can also consider the expectation current 
\[\E_k(\mu_k)=\int_{V^k}[s_1\wedge\ldots\wedge s_k=0]\,d\mu_k\]
with respect to a probability measure $\mu_k$ on $V^k$ 
that does not charge proper analytic subsets. 
Arguing as in the proof of Theorem \ref{T:expcoh} 
we see that $\E_k(\mu_k)$ is a well-defined positive closed current of bidegree 
$(r+1-k,r+1-k)$ on $X$ in the Chern class $c_{r+1-k}(E)$.
\end{Remark}

If $\mu_1$ is a continuous probability measure on $\P V$, 
we remark that the case $k=1$ of Theorem \ref{T:expcoh} 
follows from \cite[Theorem 4.2]{Sun}.

\subsection{Invariance properties under group actions}\label{SS:inv}

In this section we obtain the formula for the expectation currents 
of degeneracy sets in the case of the dual universal bundle on a 
Grassmannian (see Theorem \ref{T:expG} below).

Let $(X,\omega),\,(E,h^E),\,V,\,N$ verify (A)-(C) and $V$
be endowed with the $L^2$-inner product \eqref{e:ip2}. 
Assume that there exist holomorphic automorphisms $g\in\Aut(X)$ 
and $G\in\Aut(E)$ such that for every $x\in X$, $G:E|_x\to E|_{g(x)}$ 
is a linear isometry, i.e.
\begin{equation}\label{e:isom}
h^E_{g(x)}(G(u),G(v))=h^E_x(u,v),\;\forall\,u,v\in E_x.
\end{equation}
Then 
\begin{equation}\label{e:A}
A_1:V\to V\,,\,\;A_1(S)=G\circ S\circ g^{-1},
\end{equation}
is a linear isomorphism, and we define
\begin{equation}\label{e:Ak}
A_k:V^k\to V^k\,,\,\;A_k(S_1,\ldots,S_k)=(A_1(S_1),\ldots,A_1(S_k)).
\end{equation}
We denote again by $A_k$ the induced maps $A_k:(\P V)^k\to(\P V)^k$.

\begin{Lemma}\label{L:inv}
In the above setting, the following hold:

(i) $g^\star(c_k(E,h^E))=c_k(E,h^E)$, $0\leq k\leq r$.

(ii) If $(A_k)_\star\mu_k=\mu_k$ then $g_\star(\E_k(\mu_k))=\E_k(\mu_k)$.
\end{Lemma}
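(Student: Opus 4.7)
The plan is to derive both statements from the single structural fact that $G$ realizes an isometric holomorphic bundle isomorphism between the pullback bundle $(g^\star E, g^\star h^E)$ and $(E, h^E)$.

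For part (i), I would interpret $G$ as giving a holomorphic bundle isomorphism $\widetilde G \colon g^\star E \to E$ covering the identity of $X$, using that $(g^\star E)_x = E_{g(x)}$ and that $G^{-1}$ maps $E_{g(x)}$ back to $E_x$. Condition \eqref{e:isom} then reads exactly as the statement that $\widetilde G$ is an isometry between $(g^\star E, g^\star h^E)$ and $(E, h^E)$. Since the Chern connection and its curvature are transported by any isometric holomorphic bundle isomorphism, the Chern forms agree as well; combined with the functoriality identity $c_k(g^\star E, g^\star h^E) = g^\star c_k(E, h^E)$, this proves (i).

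For part (ii), the main step is the transformation rule for degeneracy sets under $A_k$. Unwinding the definitions, $A_1(S)(y) = G(S(g^{-1}(y)))$, and since $G$ acts fiberwise by linear isomorphisms, the vectors $A_1(S_1)(y), \ldots, A_1(S_k)(y)$ are linearly dependent in $E_y$ iff $S_1(g^{-1}(y)), \ldots, S_k(g^{-1}(y))$ are linearly dependent in $E_{g^{-1}(y)}$. Hence $D_k(A_k(s)) = g(D_k(s))$ for every $s \in (\P V)^k$ off a proper analytic subset, which at the level of currents of integration reads $[D_k(A_k(s))] = g_\star [D_k(s)]$.

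To finish, I would test $g_\star \E_k(\mu_k)$ against a smooth form $\phi$ of appropriate bidegree, rewrite using $\langle g_\star T, \phi\rangle = \langle T, g^\star \phi\rangle$ together with the transformation law above to recognize the integrand as $\langle [D_k(A_k(s))], \phi\rangle$, and then apply the change of variables coming from $(A_k)_\star \mu_k = \mu_k$ to restore $\langle [D_k(s)], \phi\rangle$, giving $\langle \E_k(\mu_k), \phi\rangle$. The delicate points are purely bookkeeping: keeping the direction of the bundle isomorphism correct in (i), and tracking the push-forward/pull-back conventions carefully in (ii). Beyond this, there is no substantive analytic difficulty.
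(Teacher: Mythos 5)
Your proposal is correct and follows essentially the same route as the paper: part (ii) is verbatim the paper's argument (transform the degeneracy set by $g$ via $A_1(S)(y)=G(S(g^{-1}(y)))$, test against a smooth form, and use $(A_k)_\star\mu_k=\mu_k$). For part (i) you phrase things invariantly---$G^{-1}$ as an isometric holomorphic isomorphism $(g^\star E,g^\star h^E)\to(E,h^E)$ plus functoriality of Chern forms---while the paper carries out the equivalent local-frame computation showing $g^\star H_2=H_1$ and hence $g^\star R^E=R^E$; the content is the same.
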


\begin{proof}
$(i)$ Let $e_1(x),\ldots,e_r(x)$ be a holomorphic frame for $E$
in a neighborhood of a point $x_0\in X$. Then $A_1(e_1),\ldots,A_1(e_r)$ 
is a holomorphic frame for $E$ near $g(x_0)$. Consider the matrices 
\[H_1=\Big[h^E_x(e_k,e_j)\Big]_{1\leq j\leq r,1\leq k\leq r}\,,\,\;
H_2=\Big[h^E_{g(x)}(A_1(e_k),A_1(e_j))\Big]_{1\leq j\leq r,1\leq k\leq r}\]
that represent $h^E$ near $x_0$, respectively near $g(x_0)$. 
By the isometry property \eqref{e:isom} of $G$ we infer that 
\[H_2=\Big[h^E_x(e_k\circ g^{-1},e_j\circ g^{-1})\Big]_{j,k},\text{ hence }
g^\star H_2=\Big[g^\star h^E_x(e_k\circ g^{-1},e_j\circ g^{-1})\Big]_{j,k}=H_1.\]
Therefore 
\[g^\star R^E_{g(x)}=g^\star\big(\db(H_2^{-1}\partial H_2)\big)=
\db\big((g^\star H_2)^{-1}\partial(g^\star H_2)\big)=
\db(H_1^{-1}\partial H_1)= R^E_{x},\]
which yields $(i)$.

$(ii)$ Let $\phi$ be a smooth $(n+k-r-1,n+k-r-1)$ form on $X$. Then
\[\langle g_\star(\E_k(\mu_k)),\phi\rangle=
\int_{(\P V)^k}\int_{\{s_1\wedge\ldots\wedge s_k=0\}}g^\star\phi\;d\mu_k=
\int_{(\P V)^k}\int_{\{g(x):\,s_1(x)\wedge\ldots\wedge s_k(x)=0\}}\phi\;d\mu_k.\]
Since $G:E|_x\to E|_{g(x)}$ is an isomorphism we have by \eqref{e:A}
\begin{align*}
\{g(x):\,s_1(x)\wedge\ldots\wedge s_k(x)=0\}&=
\{y:\,s_1(g^{-1}(y))\wedge\ldots\wedge s_k(g^{-1}(y))=0\}\\
&=\{y:\,A_1(s_1)(y)\wedge\ldots\wedge A_1(s_k)(y)=0\}.
\end{align*}
Hence by the invariance assumption on $\mu_k$ we obtain
\begin{align*}
\langle g_\star(\E_k(\mu_k)),\phi\rangle&=
\int_{(\P V)^k}\int_{\{A_1(s_1)\wedge\ldots\wedge A_1(s_k)=0\}}\phi\;d\mu_k\\
&=\int_{(\P V)^k}\int_{\{s_1\wedge\ldots\wedge s_k=0\}}\phi\;d\big((A_k)_\star\mu_k\big)=
\langle\E_k(\mu_k),\phi\rangle.
\end{align*}
\end{proof}
We now apply these invariance properties to the case 
of the universal bundle on a Grassmannian. 
Let $W$ be a complex vector space of dimension $m$ 
endowed with an inner product $\langle\cdot,\cdot\rangle$, 
and $\G(r,W)$ be the Grassmannian of $r$-planes in $W$. 
Let $\cT\to\G(r,W)$ be the universal bundle endowed with 
the Hermitian metric $h^{\cT}$ induced by the inner product on $W$, 
and let $(\cT^\star,h^{\cT^\star})$ be the dual holomorphic vector bundle. 
The unitary group ${\rm U}(W)$ of $(W,\langle\cdot,\cdot\rangle)$ 
acts on $\G(r,W)$ in the obvious way: if $g\in {\rm U}(W)$ and $\{H\}\in\G(r,W)$, 
where $H$ is an $r$-plane in $W$, then $g\{H\}=\{g(H)\}$. 
This action lifts to $\cT^\star$ as follows: if $\ell\in H^\star$ 
we define $G(\ell)=\ell\circ g^{-1}\in g(H)^\star$. 
Since $H^0(\G(r,W),\cT^\star)=W^\star$, the map $A_1$ 
defined in \eqref{e:A} is given by 
\begin{equation}\label{e:AG}
A_1:W^\star\to W^\star\,,\,\;A_1(\ell)=\ell\circ g^{-1}.
\end{equation}
We consider $W^\star$ with the inner product $\langle\cdot,\cdot\rangle$ 
induced by that on $W$. Let $\omega_\FS$ be the Fubini-Study form on 
$\P W^\star$ and $\nu$ be the Gaussian probability measure on 
$W^\star$ given in coordinates with respect to an orthonormal basis of $W^\star$.

\begin{Lemma}\label{L:invG}
If $g\in{\rm U}(W)$ then $G\in\Aut(\cT^\star)$ verifies \eqref{e:isom}. 
Moreover, the map $A_1$ in \eqref{e:AG} is an isometry of $W^\star$, 
hence $A_1^\star\omega_\FS=\omega_\FS$ and $(A_1)_\star\nu=\nu$. 
\end{Lemma}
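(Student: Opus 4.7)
The plan is to verify in turn the three assertions of the lemma. For the fiberwise isometry property of $G$ expressed in \eqref{e:isom}, I would work in a well-chosen orthonormal basis. Fix $\{H\}\in\G(r,W)$ and an orthonormal basis $e_1,\ldots,e_r$ of $(H,\langle\cdot,\cdot\rangle|_H)$; unitarity of $g$ guarantees that $g(e_1),\ldots,g(e_r)$ is an orthonormal basis of $g(H)$. By the construction of $h^{\cT^\star}$, the dual bases $\{e_j^\star\}\subset H^\star$ and $\{g(e_j)^\star\}\subset g(H)^\star$ are orthonormal in the corresponding fibers of $\cT^\star$. A direct computation from $G(\ell)=\ell\circ g^{-1}$ yields $G(e_j^\star)(g(e_k))=e_j^\star(e_k)=\delta_{jk}$, so $G(e_j^\star)=g(e_j)^\star$. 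Thus $G$ maps an orthonormal basis of $H^\star$ to an orthonormal basis of $g(H)^\star$, which proves \eqref{e:isom}.

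For the second assertion, I would exhibit $A_1$ as a conjugate of $g$ via the antilinear Riesz isomorphism $\iota:W\to W^\star$ given by $\iota(w)=\langle\cdot,w\rangle$. Since $g\in\mathrm{U}(W)$ satisfies $g^{-1}=g^\star$, one computes
\[A_1(\iota(w))(v)=\iota(w)(g^{-1}(v))=\langle g^{-1}(v),w\rangle=\langle v,g(w)\rangle=\iota(g(w))(v),\]
so $A_1\circ\iota=\iota\circ g$. Because $\iota$ is a conjugate-linear isometry between $(W,\langle\cdot,\cdot\rangle)$ and $(W^\star,\langle\cdot,\cdot\rangle)$ by the very definition of the induced inner product on $W^\star$, and $g$ is an isometry of $W$, the identity $A_1=\iota\circ g\circ\iota^{-1}$ implies that $A_1$ is a $\C$-linear isometry of $W^\star$.

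The third assertion is then a standard consequence. The Fubini-Study form on $\P W^\star$ is invariant under the projectivization of any unitary transformation of $W^\star$, hence $A_1^\star\omega_\FS=\omega_\FS$. The Gaussian measure $\nu$, written in coordinates relative to any orthonormal basis of $W^\star$, has a density depending only on the norm $\|\cdot\|_{W^\star}$, so it is invariant under any linear isometry of $W^\star$, which gives $(A_1)_\star\nu=\nu$. There is no real obstacle here; the only point requiring care is the bookkeeping of $\C$-linear versus conjugate-linear structures when dualizing the $\mathrm{U}(W)$-action. The definition $A_1(\ell)=\ell\circ g^{-1}$ (rather than a naive transpose by $g$) is chosen precisely so that $A_1$ is $\C$-linear and $g\mapsto A_1$ is a group homomorphism $\mathrm{U}(W)\to\mathrm{U}(W^\star)$.
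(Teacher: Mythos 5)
Your proof is correct and follows essentially the same route as the paper: the identity $A_1(\ell)(w)=\langle w,g(w_\ell)\rangle$, i.e.\ $A_1\circ\iota=\iota\circ g$ for the Riesz map, is exactly the paper's computation, and your orthonormal-basis check that $G(e_j^\star)=g(e_j)^\star$ just makes explicit the fiberwise argument the paper dismisses with ``follows in a similar way.'' The concluding invariance of $\omega_\FS$ and of the Gaussian measure under the unitary map $A_1$ is handled as in the paper.
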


\begin{proof}
For each $\ell\in W^\star$ there is a unique $w_\ell\in W$ 
such that $\ell(w)=\langle w,w_\ell\rangle$, $w\in W$. 
We have $\langle\ell,\ell'\rangle=\langle w_\ell,w_{\ell'}\rangle$ 
and if $g\in {\rm U}(W)$ then 
\[A_1(\ell)(w)=\langle g^{-1}(w),w_\ell\rangle=
\langle w,g(w_\ell)\rangle,\;
\langle A_1(\ell),A_1(\ell')\rangle=\langle g(w_\ell),g(w_{\ell'})\rangle=
\langle w_\ell,w_{\ell'}\rangle.\]
The assertion about $G$ follows in a similar way.
\end{proof}

\begin{Theorem}\label{T:expG}
In the above setting, let $\nu_k$ be the Gaussian probability measure 
on $(W^\star)^k$ and $\mu_k$ be the product measure on 
$(\P W^\star)^k$ determined by the Fubini-Study volume 
$(\omega_\FS)^{m-1}$ on $\P W^\star$. Then 
\[\E_k(\mu_k)=\E_k(\nu_k)=
c_{r+1-k}(\cT^\star,h^{\cT^\star}),\;1\leq k\leq r.\]
\end{Theorem}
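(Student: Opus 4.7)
The plan is to exploit the homogeneity of the Grassmannian under the unitary group ${\rm U}(W)$: both sides of the identity will be shown to be ${\rm U}(W)$-invariant closed objects in the same cohomology class, and an invariance-uniqueness principle on a compact K\"ahler homogeneous space will force them to coincide. As a preliminary simplification, the standard Gaussian $\nu$ on $W^\star$ pushes forward under the radial projection $W^\star\setminus\{0\}\to\P W^\star$ to a scalar multiple of the Fubini--Study volume $(\omega_\FS)^{m-1}$, and the degeneracy set $D_k(s_1,\ldots,s_k)$ depends only on the projective classes $[s_i]\in\P W^\star$; therefore $\E_k(\nu_k)=\E_k(\mu_k)$, and it suffices to identify $\E_k(\mu_k)$ with the asserted Chern form.

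Next I would verify ${\rm U}(W)$-invariance of both sides. For every $g\in{\rm U}(W)$, the induced map $A_1(\ell)=\ell\circ g^{-1}$ is an isometry of $W^\star$ by Lemma \ref{L:invG}, so the Fubini--Study volume, and hence the product measure $\mu_k$, is $A_k$-invariant; moreover, the associated bundle automorphism $G$ of $\cT^\star$ satisfies the isometry condition \eqref{e:isom}. Lemma \ref{L:inv}(ii) then yields $g_\star\E_k(\mu_k)=\E_k(\mu_k)$ for every $g\in{\rm U}(W)$. Since $h^{\cT^\star}$ is itself ${\rm U}(W)$-invariant by construction, the Chern form $c_{r+1-k}(\cT^\star,h^{\cT^\star})$ is likewise a ${\rm U}(W)$-invariant smooth closed form. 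By Theorem \ref{T:expcoh} (together with Remark \ref{R:expec} for the Gaussian case), $\E_k(\mu_k)$ is a positive closed current of bidegree $(r+1-k,r+1-k)$ lying in the Chern class $c_{r+1-k}(\cT^\star)$, which is exactly the class of $c_{r+1-k}(\cT^\star,h^{\cT^\star})$.

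To conclude, I would appeal to two structural properties of the compact K\"ahler homogeneous space $\G(r,W)\cong{\rm U}(m)/({\rm U}(r)\times{\rm U}(m-r))$. First, since ${\rm U}(W)$ is compact and acts transitively, every ${\rm U}(W)$-invariant (possibly distributional) section of a ${\rm U}(W)$-equivariant bundle is determined by its value at a single base point and must be fixed by the point stabilizer; this yields a finite-dimensional space of smooth invariant sections and in particular forces every ${\rm U}(W)$-invariant current on $\G(r,W)$ to be a smooth differential form. Second, on such a compact K\"ahler homogeneous space every invariant closed form is the unique harmonic representative of its de Rham cohomology class with respect to the invariant K\"ahler metric. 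Combining these two facts with the invariance and cohomology identifications established above shows that $\E_k(\mu_k)$ and $c_{r+1-k}(\cT^\star,h^{\cT^\star})$ must coincide, completing the proof. The main obstacle is precisely this uniqueness statement: showing that on a compact K\"ahler homogeneous space every invariant closed form is harmonic, and hence unique in its cohomology class, is a classical consequence of Hodge theory combined with group averaging, but a self-contained justification requires either this representation-theoretic route or an explicit identification of the space of ${\rm U}(W)$-invariant forms on $\G(r,W)$ with the Chern subring of $H^\bullet(\G(r,W),\R)$ via Schubert calculus.
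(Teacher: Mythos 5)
Your proposal is correct and follows essentially the same route as the paper: unitary invariance of the expectation currents via Lemmas \ref{L:invG} and \ref{L:inv}, the cohomology class identification via Theorem \ref{T:expcoh}, and the conclusion by uniqueness of the invariant representative on the Grassmannian. The paper fills in the two structural facts you flag by averaging over the Haar measure of ${\rm U}(W)$ and citing that invariant currents are smooth forms, and by using that $\G(r,W)\cong{\rm U}(m)/({\rm U}(r)\times{\rm U}(m-r))$ is a \emph{symmetric} space, where invariant forms coincide with harmonic forms (note this is the correct hypothesis, rather than a general compact K\"ahler homogeneous space).
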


\begin{proof}
We have $\mu_k=\omega_\FS(z_1)^{m-1}\wedge\ldots\wedge
\omega_\FS(z_k)^{m-1}$, 
where $\omega_\FS(z_j)=\pi_j^\star\omega_\FS$ 
and $\pi_j:(\P W^\star)^k\to\P W^\star_{z_j}$ 
is the projection on the $j$-th factor. 
If $A_k$ is the map defined in \eqref{e:Ak} then by 
Lemma \ref{L:invG} we infer that $(A_k)_\star\nu_k=
\nu_k$, $(A_k)_\star\mu_k=\mu_k$. 
Hence by Lemma \ref{L:inv} we have $g_\star\E_k=
\E_k$ for all $g\in {\rm U}(W)$, where $\E_k$ denotes either one of 
$\E_k(\mu_k)$ or $\E_k(\nu_k)$.

Let $dg$ denote the Haar measure on ${\rm U}(W)$. Then
\[\int_{{\rm U}(W)}(g^{-1})^\star\E_k\,dg=
\int_{{\rm U}(W)}g_\star\E_k\,dg=\E_k\,,\]
so the current $\E_k$ is represented by a smooth 
${\rm U}(W)$-invariant form \cite[Proposition 1.2.1]{Hu94}. 
Since 
\[\G(r,W)\cong{\rm U}(m)/{\rm U}(r)\times {\rm U}(m-r)\] 
is a symmetric space,  the ${\rm U}(W)$-invariant forms 
coincide with the harmonic forms (see e.g.\ \cite[p.\ 26]{LB13}), 
so each cohomology class contains a unique invariant form. 
By Lemma \ref{L:inv}, $c_{r+1-k}(\cT^\star,h^{\cT^\star})$ 
is ${\rm U}(W)$-invariant, and by Theorem \ref{T:expcoh}, 
$\E_k\in c_{r+1-k}(\cT^\star)$. 
Therefore $\E_k=c_{r+1-k}(\cT^\star,h^{\cT^\star})$.
\end{proof}

The case $k=1$ of Theorem \ref{T:expG} was obtained via 
different methods in \cite[Theorem 3.1]{Sun}.

\subsection{Formula for the expectation current}\label{SS:compexp}

We give here the proof of Theorem \ref{T:expdeg}. 
This will be done by using the Kodaira map \eqref{e:Kod2} 
and Theorem \ref{T:expG}. We will consider the case of 
the product measure $\mu_k$ on $(\P V)^k$. 
The proof in the case of the Gaussian probability measure on $V^k$ is identical. 
Let us write for simplicity 
\[Y=\G(r,V^\star),\;m=\dim Y=r(N+1-r),\;\Phi=\Phi_E:X\to Y,\]
where $\Phi_E$ is the Kodaira map defined in \eqref{e:Kod2}.
Recall that every $s\in V$ induces, by evaluation, 
a linear functional $\sigma_s:V^\star\to\C$, $\sigma_s(v^\star)=v^\star(s)$. 
By restriction to the fibers, $\sigma_s$ determines a global section 
$\sigma_s\in H^0(Y,\cT^\star)$, given by 
$\sigma_s(y)=\sigma_s|_{\cT_y}$, and we have a linear isometry
\begin{equation}
V\longrightarrow H^0(Y,\cT^\star),\;s\longmapsto\sigma_s.
\label{eq:isoGrass}
\end{equation}

Since $\Phi$ is holomorphic, we have as in \eqref{e:isostar} 
an isomorphism of holomorphic vector bundles on $X$,
\[\Psi:\Phi^\star(\cT^\star)\to E,\;\Psi((\Phi^\star\sigma_s)(x))=
s(x),\;s\in V,\;x\in X.\]
Moreover, if $s_1,\ldots,s_k\in\P V$ then 
\begin{equation}\label{e:pulldeg}
D_k(s_1,\ldots,s_k)=\Phi^{-1}\big(D_k(\sigma_{s_1},\ldots,
\sigma_{s_k})\big),\;1\leq k\leq r.
\end{equation}
By Theorem \ref{T:expcoh} and its proof we infer that for 
generic $(s_1,\ldots,s_k)\in(\P V)^k$ (i.e., outside an analytic subset 
of $(\P V)^k$) we have 
\begin{equation}\label{e:cod1}
\codim_X D_k(s_1,\ldots,s_k)=r+1-k,\;\codim_Y D_k(\sigma_{s_1},\ldots,
\sigma_{s_k})=r+1-k.
\end{equation}

\begin{Lemma}\label{L:push}
(i) If $(s_1,\ldots,s_k)\in(\P V)^k$ verifies \eqref{e:cod1} then 
\[\Phi_\star[D_k(s_1,\ldots,s_k)]=\big[D_k(\sigma_{s_1},\ldots,
\sigma_{s_k})\cap\Phi(X)\big].\]
(ii) We have 
\[\Phi_\star\E_k(\mu_k)=
\int_{(\P V)^k}\big[D_k(\sigma_{s_1},\ldots,
\sigma_{s_k})\cap\Phi(X)\big]\,d\mu_k(s_1,\ldots,s_k).\]
\end{Lemma}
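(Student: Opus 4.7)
The plan is to deduce both parts from the identity \eqref{e:pulldeg} together with the embedding hypothesis on $\Phi=\Phi_E$ made in Theorem \ref{T:expdeg}.

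For part (i), I would first use \eqref{e:pulldeg} to write
\[\Phi\bigl(D_k(s_1,\ldots,s_k)\bigr)=\Phi\bigl(\Phi^{-1}(D_k(\sigma_{s_1},\ldots,\sigma_{s_k}))\bigr)=D_k(\sigma_{s_1},\ldots,\sigma_{s_k})\cap\Phi(X),\]
so the two currents in (i) are supported on the same analytic subset of $Y$. Under \eqref{e:cod1}, $D_k(s_1,\ldots,s_k)$ is of pure codimension $r+1-k$ in $X$, and the intersection $D_k(\sigma_{s_1},\ldots,\sigma_{s_k})\cap\Phi(X)$ is of pure codimension $r+1-k$ in $\Phi(X)$; in particular the bidegrees of the two currents match. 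Since $\Phi$ is a holomorphic embedding, it restricts to a biholomorphism onto its image, and the standard formula $\Phi_\star[W]=[\Phi(W)]$ for the pushforward of the current of integration along an analytic subset $W\subset X$ by an embedding yields the claim.

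For part (ii), I would integrate (i) against $\mu_k$ using the duality definition of pushforward. For any smooth test form $\psi$ on $Y$ of appropriate bidegree,
\[\langle\Phi_\star\E_k(\mu_k),\psi\rangle=\langle\E_k(\mu_k),\Phi^\star\psi\rangle=\int_{(\P V)^k}\langle[D_k(s_1,\ldots,s_k)],\Phi^\star\psi\rangle\,d\mu_k,\]
and for each $(s_1,\ldots,s_k)$ in the Zariski-generic set where \eqref{e:cod1} holds, the integrand equals $\langle\Phi_\star[D_k(s_1,\ldots,s_k)],\psi\rangle=\langle[D_k(\sigma_{s_1},\ldots,\sigma_{s_k})\cap\Phi(X)],\psi\rangle$ by (i). The hypothesis on $\mu_k$ in Theorem \ref{T:expdeg} (a Fubini-Study volume, or the push-down of the Gaussian) gives no mass to the exceptional analytic subset of $(\P V)^k$, so the identity holds $\mu_k$-a.e.\ and the Fubini argument from the proof of Theorem \ref{T:expcoh} (Borel measurability and the uniform bound coming from the Poincar\'e dual $c_{r+1-k}(E)$) applies verbatim to justify interchanging integration and the action on $\psi$.

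The step requiring the most care is the equality $\Phi_\star[D_k(s_1,\ldots,s_k)]=[\Phi(D_k(s_1,\ldots,s_k))]$: the current $[D_k(s_1,\ldots,s_k)]$ is defined only on the regular locus of the reduced analytic set, and one must check that under an embedding this is compatible with the pushforward as currents. I would invoke the fact that for a proper holomorphic embedding $\Phi$ and an analytic cycle $W$ on $X$, the cycle-theoretic image $\Phi_\star W$ and the current-theoretic pushforward coincide (standard, via Lelong's theorem and smoothness of $\Phi$ off a set of measure zero in $W_{\reg}$). Once this is in hand, both assertions of the lemma follow formally.
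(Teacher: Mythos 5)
Your proposal is correct and follows essentially the same route as the paper: part (i) is obtained from \eqref{e:pulldeg}, the codimension matching in \eqref{e:cod1}, and the fact that the embedding $\Phi$ identifies $\Phi_\star[D_k(s_1,\ldots,s_k)]$ with the integration current along $\Phi(D_k(s_1,\ldots,s_k))=D_k(\sigma_{s_1},\ldots,\sigma_{s_k})\cap\Phi(X)$, and part (ii) follows by pairing with a test form and integrating (i) against $\mu_k$, which does not charge the exceptional analytic set. Your extra remarks on the cycle-theoretic versus current-theoretic pushforward and on the measurability/boundedness input from the proof of Theorem \ref{T:expcoh} only make explicit what the paper leaves implicit.
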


\begin{proof} Let $\chi$ be a smooth $(n+k-r-1,n+k-r-1)$ 
form on $Y$ and set $Z=D_k(s_1,\ldots,s_k)$. 
Since $\Phi$ is an embedding, it follows by \eqref{e:pulldeg} 
and \eqref{e:cod1} that 
\begin{equation}\label{e:cod2}
\Phi(Z)=D_k(\sigma_{s_1},\ldots,\sigma_{s_k})\cap\Phi(X),\;
\codim_X Z=\codim_{\Phi(X)}\Phi(Z)=r+1-k.
\end{equation}
Hence $\Phi(Z)\subset\Phi(X)\subset Y$ is an analytic subset of 
pure dimension $n+k-r-1$. We have 
\[\langle\Phi_\star[D_k(s_1,\ldots,s_k)],\chi\rangle=
\int_Z\Phi^\star\chi=\int_{\Phi(Z)}\chi=
\langle\big[D_k(\sigma_{s_1},\ldots,
\sigma_{s_k})\cap\Phi(X)\big],\chi\rangle,\]
which proves $(i)$. Using $(i)$ we obtain
\begin{align*}
\langle\Phi_\star\E_k(\mu_k),\chi\rangle&=
\int_{(\P V)^k}\langle[D_k(s_1,\ldots,s_k)],
\Phi^\star\chi\rangle\,d\mu_k\\
&=\int_{(\P V)^k}\langle\big[D_k(\sigma_{s_1},\ldots,
\sigma_{s_k})\cap\Phi(X)\big],\chi\rangle\,d\mu_k.
\end{align*}
\end{proof}

\begin{Lemma}\label{L:ineq}
The current $T=\Phi^\star\big(c_{r+1-k}(\cT^\star,h^{\cT^\star})\big)-
\E_k(\mu_k)$ is strongly positive.
 \end{Lemma}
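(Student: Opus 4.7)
The plan is to realize $T$ as a $\mu_k$-average of strongly positive currents on $X$, each capturing an excess intersection multiplicity. The starting point is Theorem~\ref{T:expG} applied to $W=V$, which gives
\[
c_{r+1-k}(\cT^\star,h^{\cT^\star})=\int_{(\P V)^k}[D_k(\sigma_{s_1},\ldots,\sigma_{s_k})]\,d\mu_k(s)
\]
as currents on $Y=\G(r,V^\star)$. Pulling back by $\Phi$ and commuting $\Phi^\star$ with the integral (addressed below), one should obtain
\[
T=\int_{(\P V)^k}\bigl(\Phi^\star[D_k(\sigma_{s_1},\ldots,\sigma_{s_k})]-[D_k(s_1,\ldots,s_k)]\bigr)\,d\mu_k(s),
\]
so it suffices to show that the integrand is strongly positive for $\mu_k$-a.e.\ $s$.

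For the pointwise step, fix $s$ in the Zariski-open subset of $(\P V)^k$ where the codimension conditions in \eqref{e:cod1} hold, and work near a point $x_0\in X$. Choose a holomorphic frame $e_1,\ldots,e_r$ of $E$ near $x_0$; via the isomorphism \eqref{e:isostar} this extends to a holomorphic frame $\tilde e_1,\ldots,\tilde e_r$ of $\cT^\star$ near $\Phi(x_0)\in Y$ matching $e_j$ on $\Phi(X)$. The $k\times k$ minors $\tilde g_1,\ldots,\tilde g_{r+1-k}$ that locally define $D_k(\sigma_{s_1},\ldots,\sigma_{s_k})\subset Y$ then pull back to the minors $g_i:=\tilde g_i\circ\Phi$ appearing in the proof of Theorem~\ref{T:expcoh}. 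Since pullback commutes with $dd^c$ on $L^1_{\mathrm{loc}}$ functions and $\{g_1=\cdots=g_{r+1-k}=0\}=D_k(s_1,\ldots,s_k)$ has codimension exactly $r+1-k$, King's formula yields
\[
\Phi^\star[D_k(\sigma_{s_1},\ldots,\sigma_{s_k})]=dd^c\log|g_1|\wedge\cdots\wedge dd^c\log|g_{r+1-k}|=\sum_i m_i[V_i],
\]
where the $V_i$ are the irreducible components of $D_k(s_1,\ldots,s_k)$ and the intersection multiplicities $m_i$ are positive integers. Since $[D_k(s_1,\ldots,s_k)]=\sum_i[V_i]$, the difference $\sum_i(m_i-1)[V_i]$ is a nonnegative integer combination of integration currents on analytic subvarieties, hence strongly positive.

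The main obstacle is justifying the interchange of $\Phi^\star$ and $\int d\mu_k$. I plan to handle this via regularization: on $Y$, approximate each $[D_k(\sigma_{s_1},\ldots,\sigma_{s_k})]$ by the smooth closed forms $\alpha^s_\varepsilon=\bigwedge_{i=1}^{r+1-k}dd^c\log(|\tilde g^s_i|^2+\varepsilon)^{1/2}$, which depend smoothly on $(s,y)$ and decrease to the integration current as $\varepsilon\to 0$ by Bedford--Taylor theory. For each fixed $\varepsilon>0$ the averaged form $\int\alpha^s_\varepsilon\,d\mu_k(s)$ is smooth on $Y$, and Fubini together with continuity of smooth pullback gives $\Phi^\star\int\alpha^s_\varepsilon\,d\mu_k=\int\Phi^\star\alpha^s_\varepsilon\,d\mu_k$. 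Sending $\varepsilon\to 0$, the left side tends to $\Phi^\star c_{r+1-k}(\cT^\star,h^{\cT^\star})$ by Theorem~\ref{T:expG}; for the right side, a uniform mass bound on $\Phi^\star\alpha^s_\varepsilon$ in $s$---supplied by the cohomology class identification of Theorem~\ref{T:expcoh}---allows dominated convergence to produce $\int\Phi^\star[D_k(\sigma_{s_1},\ldots,\sigma_{s_k})]\,d\mu_k$, closing the argument.
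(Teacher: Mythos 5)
There is a genuine gap, and it sits exactly where the difficulty of the lemma lies: the interchange of $\Phi^\star$ with the $\mu_k$-average. Your plan regularizes the $s$-dependent factor, setting $\alpha^s_\varepsilon=\bigwedge_i dd^c\log(|\tilde g^s_i|^2+\varepsilon)^{1/2}$, and then needs two limits: (a) $\Phi^\star\!\int\alpha^s_\varepsilon\,d\mu_k\to\Phi^\star c_{r+1-k}(\cT^\star,h^{\cT^\star})$, and (b) $\int\Phi^\star\alpha^s_\varepsilon\,d\mu_k\to\int\Phi^\star[D_k(\sigma_{s_1},\ldots,\sigma_{s_k})]\,d\mu_k$. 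Neither is justified. For (a), Theorem \ref{T:expG} gives only weak convergence of currents on $Y$ (after an interchange of the $\varepsilon$-limit with the $s$-integral that itself needs proof), and weak convergence on $Y$ says nothing about pullback/restriction to the positive-codimension submanifold $\Phi(X)$; you would need $\cC^0$ convergence of the averaged forms near $\Phi(X)$, which is not addressed (also note the $\tilde g^s_i$, hence $\alpha^s_\varepsilon$, are only defined on a chart, not globally on $Y$). For (b), dominated convergence requires a dominating function in $s$ for the local masses of $\Phi^\star\alpha^s_\varepsilon$, uniform in $\varepsilon$; the cohomology class identification of Theorem \ref{T:expcoh} controls the total mass of the limit currents $[D_k(\sigma_s)]$ on $Y$, not the masses of the restrictions of the regularizations to $\Phi(X)$, and these can blow up as $s$ approaches the analytic set where the codimension conditions \eqref{e:cod1} fail. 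A structural warning sign: for $\mu_k$-generic $s$ your own local computation gives $g_i=\tilde g_i\circ\Phi$ and multiplicities $m_i=1$ (this is \eqref{e:degwedge}), so the fiberwise difference is zero a.e.; if your interchange were valid as stated you would have proved $T=0$ outright, making the cohomological step in the proof of Theorem \ref{T:expdeg} superfluous. The whole content of the lemma is the possible loss of mass concentrated over the exceptional set of $s$, which a two-sided interchange cannot simply dismiss.

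The paper's proof avoids this by regularizing the \emph{other}, $s$-independent factor: it writes $[\Phi(X)]=(dd^cv)^{m-n}$ with $v=\log\|z'\|$ in local coordinates, sets $v_\varepsilon=\frac12\log(\|z'\|^2+\varepsilon)$, and tests against a weakly positive $\chi=\Phi^\star\wi\chi$. For fixed $\varepsilon$ the form $(dd^cv_\varepsilon)^{m-n}\wedge\wi\chi$ is smooth, so the $s$-average can be taken first and Theorem \ref{T:expG} replaces $\int[D_k(\sigma_s)]\,d\mu_k$ by the smooth form $c_{r+1-k}(\cT^\star,h^{\cT^\star})$; the limit $\varepsilon\to0$ on that side is unproblematic since one pairs a fixed smooth form with $(dd^cv_\varepsilon)^{m-n}\to[\Phi(X)]$. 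On the other side, only a one-sided bound is needed, and Fatou's lemma yields $\langle\E_k(\mu_k),\chi\rangle\le\langle\Phi^\star c_{r+1-k}(\cT^\star,h^{\cT^\star}),\chi\rangle$, i.e.\ exactly the asserted strong positivity of $T$, without ever claiming the problematic equality at this stage. If you want to salvage your approach, you would have to prove uniform (in $s$, near the exceptional set) local mass estimates for the regularized pullbacks, or a $\cC^0$ convergence statement for the averaged forms near $\Phi(X)$ — both substantially harder than the one-sided Fatou argument.
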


\begin{proof} The statement is local. Let $x_0\in X$. 
Since $\Phi$ is an embedding we can find 
a coordinate polydisc $U\subset Y$ centered at 
$y_0=\Phi(x_0)$ such that if 
$z=(z',z'')\in\C^{m-n}\times\C^n$ are the coordinates on $U$ 
then $\Phi(X)\cap U=\{z'=0\}$. 

Let $\chi$ be a smooth (weakly) positive $(n+k-r-1,n+k-r-1)$ form 
supported in $\Phi^{-1}(U)$. 
It is easy to see that there exists a smooth positive 
$(n+k-r-1,n+k-r-1)$ form $\wi\chi$ 
supported in $U$ such that $\Phi^\star\wi\chi=\chi$.

Assume that $(s_1,\ldots,s_k)\in(\P V)^k$ verifies \eqref{e:cod1}. 
As in the proof of Theorem \ref{T:expcoh}, we can write 
\begin{equation}\label{e:w1}
[D_k(\sigma_{s_1},\ldots,\sigma_{s_k})]=dd^c\log|g_1|\wedge\ldots
\wedge dd^c\log|g_{r+1-k}|,
\end{equation}
where $g_j$ are holomorphic functions on $U$ such that 
$\codim\{g_{j_1}=\ldots=g_{j_\ell}=0\}\geq\ell$ for each 
$1\leq j_1<\ldots<j_\ell\leq r+1-k$, so the wedge product 
if well defined by \cite[Corollary 2.11]{D93}, \cite[Corollary 3.6]{FS95}. 
Moreover, we have 
\begin{equation}\label{e:w2}
[\Phi(X)]=(dd^cv)^{m-n}, \text{ where } v(z)=\log\|z'\|,\;z\in U.
\end{equation}
Using again the results of \cite{D93,FS95}, we infer by \eqref{e:cod2}, 
\eqref{e:w1}, \eqref{e:w2} that the following wedge product of 
bidegree $(1,1)$ currents is well defined and 
\begin{equation}\label{e:w3}
dd^c\log|g_1|\wedge\ldots\wedge dd^c\log|g_{r+1-k}|\wedge(dd^cv)^{m-n}=
\big[D_k(\sigma_{s_1},\ldots,\sigma_{s_k})\cap\Phi(X)\big].
\end{equation}
Indeed, if $\theta$ is a smooth $(n+k-r-1,n+k-r-1)$ form supported in $U$ then 
\begin{align*}
\int_Udd^c\log&(|g_1|^2+\varepsilon)^{1/2}\wedge\ldots
\wedge dd^c\log(|g_{r+1-k}|^2+\varepsilon)^{1/2}
\wedge(dd^cv)^{m-n}\wedge\theta=\\
&\int_{\Phi(X)\cap U}\Big(\bigwedge_{j=1}^{r+1-k}dd^c\log(|g_j(0,z'')|^2+
\varepsilon)^{1/2}\Big)\wedge\theta(0,z'')\longrightarrow\\
&\int_{\Phi(X)\cap U}dd^c\log|g_1(0,z'')|\wedge\ldots\wedge 
dd^c\log|g_{r+1-k}(0,z'')|\wedge\theta(0,z'')=
\int_A\theta
\end{align*}
as $\varepsilon\searrow0$, where $\theta(0,z'')$ denotes the pullback of 
$\theta$ to $\Phi(X)\cap U$ and $A$ denotes the set of regular points of 
$D_k(\sigma_{s_1},\ldots,\sigma_{s_k})\cap\Phi(X)$.

Let $v_\varepsilon(z)=\frac{1}{2}\log(\|z'\|^2+\varepsilon)$. Then 
\begin{equation}\label{e:w4}
dd^c\log|g_1|\wedge\ldots\wedge dd^c\log|g_{r+1-k}|\wedge
(dd^cv_\varepsilon)^{m-n}\to 
\big[D_k(\sigma_{s_1},\ldots,\sigma_{s_k})\cap\Phi(X)\big] 
\end{equation}
as $\varepsilon\searrow0$, in the weak sense of currents. 
Using Lemma \ref{L:push}, \eqref{e:w4}, Fatou's lemma and \eqref{e:w1} we infer that 
\begin{align*}
\langle\E_k(\mu_k),\chi\rangle&=\langle\Phi_\star\E_k(\mu_k),\wi\chi\rangle=
\int_{(\P V)^k}\langle\big[D_k(\sigma_{s_1},\ldots,\sigma_{s_k})\cap\Phi(X)\big],\wi\chi\rangle\,d\mu_k\\
&\leq\liminf_{\varepsilon\searrow0}\int_{(\P V)^k}\int_Ydd^c\log|g_1|\wedge\ldots\wedge 
dd^c\log|g_{r+1-k}|\wedge(dd^cv_\varepsilon)^{m-n}\wedge\wi\chi\,d\mu_k\\
&=\liminf_{\varepsilon\searrow0}\int_Y\Big(\int_{(\P V)^k}[D_k(\sigma_{s_1},\ldots,\sigma_{s_k})]\,
d\mu_k\Big)\wedge(dd^cv_\varepsilon)^{m-n}\wedge\wi\chi.
\end{align*}
By Theorem \ref{T:expG} and \eqref{e:w2} we have, as $\varepsilon\searrow0$,
\begin{align*}
\int_Y\Big(\int_{(\P V)^k}[D_k&(\sigma_{s_1},\ldots,\sigma_{s_k})]\,
d\mu_k\Big)\wedge(dd^cv_\varepsilon)^{m-n}\wedge\wi\chi= \\ 
&\int_Yc_{r+1-k}(\cT^\star,h^{\cT^\star})\wedge(dd^cv_\varepsilon)^{m-n}\wedge\wi\chi\longrightarrow \\
&\int_{\Phi(X)}c_{r+1-k}(\cT^\star,h^{\cT^\star})\wedge\wi\chi=
\int_X\Phi^\star\big(c_{r+1-k}(\cT^\star,h^{\cT^\star})\wedge\chi.
\end{align*}
This concludes the proof of the lemma.
\end{proof}

\begin{proof}[Proof of Theorem \ref{T:expdeg}]
By Lemma \ref{L:ineq}, 
$T=\Phi^\star\big(c_{r+1-k}(\cT^\star,h^{\cT^\star})\big)-\E_k(\mu_k)$ 
is a positive closed current of bidegree $(r+1-k,r+1-k)$ on $X$ 
which is null-cohomologous. Indeed,
by Theorem \ref{T:expcoh} $\E_k(\mu_k)$ belongs to the Chern class 
$c_{r+1-k}(E)$, and so does the form 
$\Phi^\star\big(c_{r+1-k}(\cT^\star,h^{\cT^\star})\big)$ 
since the vector bundles $\Phi^\star(\cT^\star)$ and $E$ are isomorphic. 
Therefore $\int_X T\wedge\omega^{n+k-r-1}=0$, hence $T=0$. 
\end{proof}

\begin{proof}[Proof of Corollary \ref{cor:1.5}]
 Under the hypothesis of Corollary \ref{cor:1.5}, for all sufficiently large $p$, the Kodaira map $\Phi_p$ in \eqref{e:Kod1} is a holomorphic embedding. Moreover, by the isomorphism in \eqref{e:isostar}, assumptions (A)-(C) hold for $(X,\omega)$ and $(L^p\otimes E, h^{L^p\otimes E})$. So Theorem \ref{T:expdeg} applies, and we have 
 \[\E_k(\mu^p_k)=\E_k(\nu^p_k)=
\Phi_p^\star\big(c_{r+1-k}(\cT^\star,h^{\cT^\star})\big),\;1\leq k\leq r,\; p \gg 1.\]
Then \eqref{eq:cor1.5} and \eqref{eq:cor1.5-2} follow directly from \eqref{e:Tian}, respectively \eqref{e:Tian1}.
\end{proof}
\subsection{Degeneracy set of a tuple of sections}
\label{S:tuple}
Let us consider a special case of Theorem \ref{T:expdeg} 
for $E=L^{\oplus r}$ ($1\leq r\leq n$), where $(L,h^L)$ is a positive line bundle 
on $X$ such that the Kodaira map $\Phi_L: X\rightarrow \P V^\star_L $ 
is an embedding, where $V_L:=H^0(X,L)$. 
We also assume that every $r$ sections in $V_L$ have at least one common zero in $X$. 
Then $(X,\omega)$, $(E, h^E):=(L^{\oplus r}, (h^L)^{\oplus r})$ 
satisfy assumptions (A)-(C). Note that 
$V:=H^0(X, L^{\oplus r})=V_L^{\oplus r}$, and let $\nu_L$ 
be the Gaussian probability measure on $V_L$.

We define a canonical embedding 
\[\Theta_r: \P V_L^\star\rightarrow \mathbb{G}(r, V^\star) \]
that sends the complex line $[\xi]$ to the $r$-subspace $(\C\xi)^{\oplus r}$. 
Moreover, we have the isomorphism of the Hermitian bundles on $\P V_L^\star$:
\[\Theta_r^\star \cT^\star\simeq \mathcal{O}(1)^{\oplus r}.\]
As a consequence, on $\P V_L^\star $, for $0\leq k\leq r$,
\begin{equation}
    \Theta_r^\star c_k(\cT^\star,h^{\cT^\star})=
    \binom{r}{k}\omega_{\mathrm{FS}}^k,
\end{equation}
where $\omega_{\mathrm{FS}}$ is the Fubini-Study form on 
$\P V_L^\star$ associated to the $L^2$-inner product on $V_L$.

Let $\Phi_E: X\rightarrow \mathbb{G}(r, V^\star)$ 
be the Kodaira map \eqref{e:Kod1}. 
Then we have $\Phi_E=\Theta_r\circ \Phi_L.$
Hence $\Phi_E$ is an embedding, and
\begin{equation}
\Phi_E^\star(c_k(\cT^\star,h^{\cT^\star}))=
\binom{r}{k}\Phi_L^\star(\omega_{\mathrm{FS}})^k.
\label{eq:5.15new}
\end{equation}
When $k=r$, combining Theorem \ref{T:expdeg} and \eqref{eq:5.15new}, we recover \cite[Lemma 4.3]{ShZ99} for the expectation current of simultaneous zeros of $r$ independent random holomorphic sections of $L$.

 Consider the random square matrix 
 $(s_{j\ell})_{1\leq j,\ell\leq r}\in M_{r\times r}(V_L)\simeq V_L^{r^2}$, 
 where the coefficients $s_{j\ell}$ are independent and identically 
 distributed random elements in $V_L$ with the law $\nu_L$. 
 Define the determinant section
\begin{equation}\label{e:detsec}
    \det (s_{j\ell})_{1\leq j,\ell\leq r}\in H^0(X,L^r).
\end{equation}
Then the zero set of $\det(s_{j\ell})_{1\leq j,\ell\leq r}$
equals the degeneracy set of $(s_1,\ldots, s_r)\in V^r$, 
where $s_\ell=(s_{j\ell})_{1\leq j \leq r}\in V$. 
We endow on $V^r$ with the Gaussian probability measure $\nu_r=\nu_L^{r^2}$.
By Theorem \ref{T:expdeg} and \eqref{eq:5.15new}, 
the expectation current for the zeros of the determinant section \eqref{e:detsec} is given by
\begin{equation}
\E[ \det (s_{j\ell})_{1\leq j,\ell\leq r}=0]=
r\Phi_L^\star(\omega_{\mathrm{FS}}).
\label{eq:5.17new}
\end{equation}
We will discuss further the case $E=L^{\oplus r}$ in Section \ref{S:tuples}.


\section{Distribution of zeros of random sections}\label{S:zeros}

In this section we give the proof of Theorem \ref{T:zeros}. 
To this end we will use Theorem \ref{T:Tian}, 
together with an equidistribution theorem of Dinh and Sibony 
for meromorphic transforms \cite{DS06}. We will apply their result 
to the meromorphic transforms determined by the Kodaira maps into Grassmannians as in Section \ref{S:MTG}.

We start by recalling a few notions that we will need. 
Let $\P^\ell$ be the complex projective space of dimension $\ell$ 
and $\omega_\FS$ be the Fubini-Study form. We denote by $PSH(\P^\ell,\omega_\FS)$ 
the class of $\omega_\FS$-plurisubharmonic functions. 
These are the functions $\varphi$ on $\P^\ell$ which are locally the sum of a 
plurisubharmonic (psh) function and a smooth one, such that 
$\omega_\FS+dd^c\varphi\geq0$ in the sense of currents. Let 
\begin{align}
R(\P^\ell)&=\sup_\varphi\Big\{\max_{\P^\ell}\varphi:\,\varphi\in PSH(\P^\ell,\omega_\FS),\,
\int_{\P^\ell}\varphi\,\omega_\FS^\ell=0\Big\}, \label{e:R}\\
\Delta(\P^\ell,t)&= \sup_\varphi\Big\{\int_{\{\varphi<-t\}}\omega_\FS^\ell:\,
\varphi\in PSH(\P^\ell,\omega_\FS),\,\int_{\P^\ell}\varphi\,\omega_\FS^\ell=0\Big\},\;t\in\R. 
\label{e:Delta}
\end{align}
Then 
\begin{equation}\label{e:RDest}
R(\P^\ell)\leq\frac{1}{2}\,(1+\log\ell)\,,\,\;\Delta(\P^\ell,t)\leq 
c\ell e^{-\alpha t},\;\forall\,t\in\R,\end{equation}
where $c>0,\alpha>0$ are constants independent of 
$\ell$, cf.\ \cite[Proposition A.3,\ Corollary A.5]{DS06}.

\begin{Theorem}\label{T:zeros2}
Let $(X,\omega)$ be a compact K\"ahler manifold of dimension $n$, 
$(L,h^L)$ be a positive line bundle on $X$, and $(E,h^E)$ 
be a Hermitian holomorphic vector bundle of rank $r\leq n$ on $X$. 
Let $(\mathcal H,\Upsilon)$ be the probability space defined in \eqref{e:prob}. 
Then there exist $C>0$ and $p_0\in\N$ such that the following holds: 
For any $\gamma>1$ there exist subsets 
$\mathcal{E}_p=\mathcal{E}_p(\gamma)\subset\P V_p$ such that for $p>p_0$ we have 

(i) $\Upsilon_p(\mathcal{E}_p)\leq Cp^{-\gamma}$;

(ii) if $s_p\in\P V_p\setminus \mathcal{E}_p$ then
\[\Big|\frac{1}{p^r}\,\Big\langle[s_p=0]-\Phi_p^\star(c_r(\cT^\star,h^{\cT^\star})),
\phi\Big\rangle\Big|\leq C\gamma\,\frac{\log p}{p}\,\|\phi\|_{\cC^2(X)},\]
for any $(n-r,n-r)$ form $\phi$ of class $\cC^2$ on $X$.
Moreover, the estimate (ii) holds for $\Upsilon$-a.e. sequence
 $\{s_p\}_{p\geq1}\in\mathcal H$ provided
that $p$ is large enough.
\end{Theorem}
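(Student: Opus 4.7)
My plan is to apply the Dinh--Sibony equidistribution theorem for meromorphic transforms \cite{DS06} to the Kodaira maps $\Phi_p$, capitalizing on the framework assembled in Section \ref{S:MTG}. First, Proposition \ref{P:MTG} applied to the Hermitian bundle $(L^p \otimes E, h^{L^p \otimes E})$ (which satisfies (A)--(C) for all sufficiently large $p$, since $L$ is positive) shows that $F_p := F_{L^p \otimes E} : X \dashrightarrow \P V_p$ is a meromorphic transform of codimension $n-r$. By \eqref{e:Dirac} and \eqref{e:MTexp} of Theorem \ref{T:MTGdeg},
\[
F_p^\star(\delta_{s_p}) = [s_p = 0]\ \text{for}\ s_p \notin J_p, \quad F_p^\star(\omega_\FS^{d_p}) = \Phi_p^\star\bigl(c_r(\cT^\star, h^{\cT^\star})\bigr),
\]
where $J_p \subset \P V_p$ is the proper analytic set of \eqref{e:excep}. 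Thus (ii) becomes a quantitative concentration statement for the random current $F_p^\star(\delta_{s_p})$ around its Fubini--Study expectation $F_p^\star(\omega_\FS^{d_p})$.

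The quantitative input from \cite{DS06} has the form: there exists $C > 0$ such that for every $\eta > 0$ one can find $\mathcal{F}_{p,\eta} \subset \P V_p$ with $\Upsilon_p(\mathcal{F}_{p,\eta}) \leq \Delta(\P^{d_p}, \eta)$, and for $s_p \notin \mathcal{F}_{p,\eta}$ and every $\cC^2$ form $\phi$ of bidegree $(n-r,n-r)$ on $X$,
\[
\bigl|\bigl\langle [s_p = 0] - \Phi_p^\star(c_r(\cT^\star, h^{\cT^\star})), \phi \bigr\rangle\bigr| \leq C \bigl(R(\P^{d_p}) + \eta\bigr)\, \lambda_{d_p - 1}(F_p)\, \|\phi\|_{\cC^2(X)}.
\]
The crucial parameter is the intermediate degree $\lambda_{d_p - 1}(F_p) = \int_X c_{r-1}(L^p \otimes E) \wedge \omega^{n-r+1}$ furnished by Theorem \ref{T:MTGdeg}; Lemma \ref{L:ChernLp} gives $\lambda_{d_p - 1}(F_p) = O(p^{r-1})$ with constants depending only on $(X,\omega,L,E)$.

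Since $d_p = \dim V_p - 1 = O(p^n)$ (see Section \ref{S:Prelim}), the bounds \eqref{e:RDest} yield $R(\P^{d_p}) = O(\log p)$ and $\Delta(\P^{d_p}, \eta) \leq c\, d_p\, e^{-\alpha \eta} = O\bigl(p^n\, e^{-\alpha \eta}\bigr)$. Choosing $\eta = \alpha^{-1}(\gamma + n)\log p$ and setting $\mathcal{E}_p := \mathcal{F}_{p,\eta} \cup J_p$ (noting that $\Upsilon_p(J_p) = 0$ as $J_p$ is a proper analytic subset) yields $\Upsilon_p(\mathcal{E}_p) \leq C p^{-\gamma}$, proving (i); on the complement the estimate above becomes $O\bigl(\gamma\, (\log p)\, p^{r-1}\bigr)\,\|\phi\|_{\cC^2(X)}$, and dividing by $p^r$ gives (ii). The final almost-sure statement follows from the Borel--Cantelli lemma: for $\gamma > 1$, $\sum_p \Upsilon_p(\mathcal{E}_p(\gamma)) \leq C \sum_p p^{-\gamma} < \infty$, so for $\Upsilon$-a.e.\ sequence $\{s_p\}_{p \geq 1} \in \mathcal{H}$ only finitely many $s_p$ belong to $\mathcal{E}_p$, and (ii) holds for all $p$ sufficiently large. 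The principal technical obstacle is invoking the Dinh--Sibony quasi-potential machinery in the precise quantitative form used above, which entails constructing a $(d_p - 1, d_p - 1)$-current quasi-potential of $\delta_{s_p} - \omega_\FS^{d_p}$ on $\P V_p$ with mass controlled by $R(\P^{d_p})$ plus a tail governed by $\Delta(\P^{d_p}, \eta)$, and pushing it through $F_p^\star$ with mass estimated by $\lambda_{d_p - 1}(F_p)$; all constants must remain uniform in $\gamma$ so that (i) and (ii) hold simultaneously.
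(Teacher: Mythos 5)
Your proposal follows essentially the same route as the paper: interpret $\Phi_p$ as a Dinh--Sibony meromorphic transform, identify $F_p^\star(\delta_{s_p})=[s_p=0]$ and $F_p^\star(\Upsilon_p)=\Phi_p^\star(c_r(\cT^\star,h^{\cT^\star}))$ via Theorem \ref{T:MTGdeg}, bound the intermediate degrees with Lemma \ref{L:ChernLp}, use \eqref{e:RDest} with the threshold $\sim\gamma\log p$, and finish with Borel--Cantelli; your restatement of the \cite{DS06} concentration estimate is just a reparametrization of \cite[Lemma 4.2\,(d)]{DS06} (deviation $\lesssim(\eta+3R)\delta(F_p)$ outside a set of measure $\Delta(\P^{d_p},\eta)$), so the quantitative bookkeeping matches. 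The only point you gloss over is the verification of hypothesis (B) for $L^p\otimes E$ (every section must vanish somewhere, needed for $F_p$ to be a meromorphic transform), which the paper settles via \cite[Corollary 3.6]{Kl69} and which also follows from $\int_X c_r(L^p\otimes E)\wedge\omega^{n-r}>0$ for large $p$.
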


\begin{proof}
We consider the Kodaira maps $\Phi_p$ from \eqref{e:Kod1} 
as meromorphic transforms as in Section \ref{S:MTG} (see\eqref{e:Kod2},\eqref{e:MTGdef}). 
Since $(L,h^L)$ is positive, we have by \cite[Theorem 5.15]{MM07} 
that the vector bundles $L^p\otimes E$ verify (C) for all $p$ sufficiently large. 

Let $\Gamma_p\subset X\times\P V_p$ be the set 
defined by $\Phi_p$ as in \eqref{e:MTGdef}, 
where $\P V_p$ is endowed with the Fubini-Study form $\omega_\FS$. 
It follows from \cite[Corollary 3.6]{Kl69} that for all $p$ sufficiently large and 
for generic $s\in V_p$ we have $Z_{s}\neq\varnothing$, 
so the image of the projection $\pi_2(\Gamma_p)\subset\P V_p$ is dense. 
Thus $\pi_2$ is surjective since $\pi_2(\Gamma_p)$ is an analytic subset of $\P V_p$, 
so (B) holds as well. By Proposition \ref{P:MTG}, $\Phi_p$ 
defines a meromorphic transform $F_p$ of codimension $n-r$, 
with graph $\Gamma_p$. Let $d(F_p)$, $\delta(F_p)$ denote its intermediate degrees 
of order $d_p$, respectively $d_p-1$, see \eqref{e:intdeg}. 
Using Theorem \ref{T:MTGdeg} and Lemma \ref{L:ChernLp} we get 
\begin{align*}
d(F_p)&=\int_Xc_r(L^{p}\otimes E, h^{L^{p}\otimes E})\wedge\omega^{n-r}=
p^r\int_Xc_1(L,h^L)^r\wedge\omega^{n-r}+O(p^{r-1}),\\
\delta(F_p)&=\int_Xc_{r-1}(L^{p}\otimes E, h^{L^{p}\otimes E})\wedge\omega^{n-r+1}=
rp^{r-1}\int_Xc_1(L,h^L)^{r-1}\wedge\omega^{n-r+1}+O(p^{r-2}).
\end{align*}
Using these, \eqref{abk2.3} and \eqref{e:RDest} 
we infer that there exist $C_1>1$ and $p_0\in\N$
such that for all $p>p_0$ we have
\begin{equation}\label{e:est1}
\begin{split}
&d_p\leq C_1p^n\,,\,\;d(F_p)\leq C_1p^r\,,\,\;\frac{d(F_p)}{\delta(F_p)}\geq
\frac{p}{C_1},\\
&R(\P V_p)\leq C_1+\frac{n}{2}\log p\,,\,\;\Delta(\P V_p,t)\leq 
C_1p^ne^{-\alpha t},\;\forall\,t\in\R,
\end{split}
\end{equation}
where $R(\P V_p)$, $\Delta(\P V_p,t)$ are defined in \eqref{e:R} and 
\eqref{e:Delta}, and $\alpha>0$ is the constant from \eqref{e:RDest}.
For $\varepsilon>0$ let
\[\mathcal{E}'_p(\varepsilon):=\bigcup_{\|\phi\|_{\cC^2(X)}\leq 1}
\left\{s_p\in\P V_p:\,\left|\left\langle F_p^\star(\delta_{s_p}) - 
F_p^\star(\Upsilon_p),\phi\right\rangle\right| 
\geq d(F_p)\varepsilon \right\},\]
\[\mathcal{E}''_p(\varepsilon):=\bigcup_{\|\phi\|_{\cC^2(X)}\leq 1}
\left\{s_p\in\P V_p:\,\left|\left\langle F_p^\star(\delta_{s_p}) - 
F_p^\star(\Upsilon_p),\phi\right\rangle\right| 
\geq C_1p^r\varepsilon \right\},\]
where $\phi$ is a $(n-r,n-r)$ form on $X$ of class $\cC^2$. 
Using \eqref{e:est1} 
and \cite[Lemma 4.2\,(d)]{DS06} we obtain, for all $\varepsilon>0$,
\begin{equation}\label{e:est2}
\Upsilon_p(\mathcal{E}''_p(\varepsilon))\leq
\Upsilon_p(\mathcal{E}'_p(\varepsilon))\leq
\Delta(\P V_p,t_{\varepsilon,p}), \text{ where } t_{\varepsilon,p}=
\varepsilon\,\frac{d(F_p)}{\delta(F_p)}-3R(\P V_p).
\end{equation}

Recall by \eqref{e:Dirac} that $F_p^\star(\delta_{s_p})=[s_p=0]$ 
for $s_p$ outside an analytic subset of $\P V_p$. 
Moreover, by Theorem \ref{T:MTGdeg}, $F_p^\star(\Upsilon_p)=
\Phi_p^\star\big(c_r(\cT^\star, h^{\cT^\star})\big)$. 
Using \eqref{e:est1} 
and \eqref{e:est2} we get, for all $p>p_0$ and $\varepsilon>0$, that
\[t_{\varepsilon,p}\geq\frac{\varepsilon p}{C_1}-
3\Big(C_1+\frac{n}{2}\log p\Big)\,,\,\;
\Upsilon_p(\mathcal{E}''_p(\varepsilon))\leq 
C_1p^n\exp\Big(-\frac{\alpha\varepsilon p}{C_1}+3\alpha C_1+
\frac{3\alpha n}{2}\log p\Big).\]
We now take $\gamma>1$ and choose 
$\varepsilon=\varepsilon_{p,\gamma}$ such that
\[-\frac{\alpha p\varepsilon}{C_1}+3\alpha C_1+
\frac{3\alpha n}{2}\log p=-(n+\gamma)\log p.\]
Let $\mathcal{E}_p=\mathcal{E}_p(\gamma):=
\mathcal{E}''_p(\varepsilon_{p,\gamma})$. 
Then for all $p>p_0$ we have that
\[\Upsilon_p(\mathcal{E}_p(\gamma))\leq C_1p^{-\gamma}.\]
Moreover, if $s_p\in\P V_p\setminus \mathcal{E}_p$ and 
$\phi$ is a $(n-r,n-r)$ form on $X$ 
of class $\cC^2$, we infer by the definition of $\mathcal{E}_p$ that 
\[\Big|\frac{1}{p^r}\,\Big\langle[s_p=0]-
\Phi_p^\star(c_r(\cT^\star,h^{\cT^\star})),\phi\Big\rangle\Big|
\leq C_1\varepsilon_{p,\gamma}\|\phi\|_{\cC^2(X)}.\]
Note that 
\[\varepsilon_{p,\gamma}=\frac{C_1}{\alpha p}
\left(\Big(n+\gamma+\frac{3\alpha n}{2}\Big)\log p+
3\alpha C_1\right)\leq C_2\gamma\,\frac{\log p}{p}\]
holds for $p>p_0$, where $C_2>0$ is a constant independent 
of $p$ and $\gamma$. 
This yields assertion $(ii)$ of Theorem \ref{T:zeros2}.
Finally, the last assertion of Theorem \ref{T:zeros2} 
follows from the 
Borel-Cantelli lemma since $\sum_{p=1}^\infty\Upsilon_p(\mathcal{E}_p)<\infty$.
\end{proof}

\begin{proof}[Proof of Theorem \ref{T:zeros}]
The proof is a direct application of Theorems \ref{T:zeros2} and \ref{T:Tian}.
\end{proof}

\begin{Remark}\label{Rk:6.2}
Note that in Theorem \ref{T:zeros}, we may always replace $(\P V_p,\Upsilon_p)$ by $(V_p, \mathcal{G}_p)$, where $\mathcal{G}_p$ denotes the Gaussian probability measure on $V_p$ induced by the $L^2$-inner product. Indeed, if $\pi:V_p\setminus\{0\}\to\P V_p$ is the canonical projection and $\wi{\mathcal{E}_p}=\pi^{-1}(\mathcal{E}_p)$ then $\mathcal{G}_p(\wi{\mathcal{E}_p})=\Upsilon_p(\mathcal{E}_p)$, so Theorem \ref{T:zeros} holds for the sets $\wi{\mathcal{E}_p}$.

\end{Remark}

\section{Simultaneous zero sets of holomorphic sections}
\label{S:tuples}
Let $(X,\omega)$ be a compact K\"{a}hler manifold of dimension $n$  
and let $(L,h^L)$ be a positive line bundle on $X$. 
Let $E_0=X\times\C^r$ be the trivial 
vector bundle on $X$, where $r\leq n$.
In this section we specialize the results of  
Theorem \ref{T:zeros} and Corollary \ref{cor:1.5} to the case $E=E_0$. 
In particular, we recover the results of 
Shiffman-Zelditch \cite{ShZ99} and Dinh-Sibony \cite{DS06} 
for the simultaneous zeros of several independent random 
sections of a line bundle. 

If $V_p:=H^0(X,L^p\otimes E_0)$ we have a natural isomorphism
\begin{equation}
    V_p\cong
    H^0(X,L^p)^{\oplus r},
    \label{eq:7.1}
\end{equation}
which identifies sections of $L^p\otimes E_0$ to
$r$-tuples of sections of $L^p$. For $j=1,\ldots, r$, let 
$\Pi^p_j:V_p\rightarrow H^0(X,L^p) $ 
denote the projection onto the $j$-th component in the direct sum of \eqref{eq:7.1}. 
Let $h^{E_0}_0$ denote the trivial Hermitian metric on $E_0$. 
Then, \eqref{eq:7.1} is also an isometry of the $L^2$-inner products induced 
by $h^L$ and $h^{E_0}_0$.
If we equip the trivial bundle $E_0$ with an arbitrary smooth Hermitian metric 
$h^{E_0}$, then the identification \eqref{eq:7.1} is generally no longer isometric, 
and the direct sum is not orthogonal with respect to the $L^2$-inner product induced 
by $h^L$ and $h^{E_0}$. 

Let $M_+(\C^r)$ be the set of positive definite Hermitian matrices of rank $r$, 
which carries the usual smooth structure. Then the set of smooth Hermitian metrics 
$h^{E_0}$ on $E_0$ is given by the set $\cC^\infty(X, M_+(\C^r))$
of all smooth functions on $X$ with values in $M_+(\C^r)$.

If $h^{E_0}_H$ on $E_0$ is given by an element 
$H\in\mathscr{C}^\infty(X, M_+(\C^r))$, that is, 
\[h^{E_0}_H(\cdot, \cdot)(x)=h^{E_0}_0(H(x)\cdot,\cdot),\;x\in X,\]
then the Chern curvature associated with $h^{E_0}_H$ 
is given by the following matrix of $(1,1)$-forms (see \eqref{eq:2.22New})
\begin{equation}
 R^{E}=H^{-1}\bar{\partial}\partial H-H^{-1}\bar{\partial}H\wedge H^{-1}\partial H.
\end{equation}
We can write the formula for $c_1(E_0,h^{E_0}_H)$ 
in terms of the matrix $H$,
\[c_1(E_0,h^{E_0}_H)=\frac{i}{2\pi}\Tr[H^{-1}\bar{\partial}\partial H-
H^{-1}\bar{\partial}H\wedge H^{-1}\partial H]=
-\frac{i}{2\pi} \partial\bar{\partial}\log \det H(x) \in \Omega^{(1,1)}(X).\]

We equip $V_p$ with the inner product induced by $h^L$ and $h^{E_0}_H$, 
and we denote by $\mathcal{G}_p^k=\mathcal{G}_{p,H}^k$ the corresponding 
Gaussian probability measure on $V_p^k$, where $1\leq k\leq r$ (it corresponds 
to the measure $\nu^p_k$ in Corollary \ref{cor:1.5}). 
If $(s^1_p, \ldots, s^k_p)\in V_p^k$ then $s^1_p \wedge \ldots \wedge s^k_p$ 
defines a random holomorphic section of $L^{pk} \otimes \Lambda^k E_0$ over $X$, 
where $\Lambda^k E_0=X\times\Lambda^k \mathbb{C}^r$. 
Let 
\[\E_k(\mathcal{G}_p^k)=\E[D_k(s^1_p,\ldots, s^k_p)]\]
be the expectation current of the degeneracy set of $s^1_p,\ldots, s^k_p$ defined 
in \eqref{e:expk}. Then Theorem \ref{T:zeros} and Corollary \ref{cor:1.5} hold in this 
setting (see also Remark \ref{Rk:6.2}). In particular, we have:

\begin{Proposition}\label{P:r-tuple}
In the above setting, where $h^{E_0}_H$ is the Hermitian metric associated to 
$H\in \mathscr{C}^\infty(X, M_+(\C^r))$, the following hold:

(i) If $s_p=(s_{p,1},\ldots, s_{p,r})\in V_p$ then, as $p\rightarrow\infty\,$, the normalized 
current of integration 
\[p^{-r}[s_p=0]=p^{-r}[s_{p,1}=\ldots =s_{p,r}=0]\] 
over the simultaneous zeros converges weakly almost surely to 
$c_1(L,h^L)^r$, with convergence speed $O(p^{-1}\log p)$.

(ii) If $\omega=c_1(L,h^L)$ then, as $p\to\infty$,
\begin{equation}\label{eq:7.2-2}
\frac{1}{p^{r+1-k}}\,\E_k(\mathcal{G}_p^k)=
\binom{r}{k-1} \omega^{r+1-k}-
\frac{i}{2\pi p}\binom{r-1}{r-k}\partial\bar{\partial}\log \det H\wedge 
\omega^{r-k}+O\left(\frac{1}{p^2}\right).
\end{equation}
The sub-leading term (coefficient of $p^{-1}$) vanishes identically 
if $\det H$ is constant on $X$.
\end{Proposition}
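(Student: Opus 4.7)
The plan is to deduce both parts directly from the general results (Theorem \ref{T:zeros} and Corollary \ref{cor:1.5}) applied to the trivial vector bundle $E=E_0$ equipped with the Hermitian metric $h^{E_0}_H$, using the preparatory computation of $c_1(E_0,h^{E_0}_H)$ given just before the statement.

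For part (i), the key observation is purely geometric: under the isomorphism \eqref{eq:7.1}, a section $s_p\in V_p=H^0(X,L^p\otimes E_0)$ corresponds to an $r$-tuple $(s_{p,1},\ldots,s_{p,r})\in H^0(X,L^p)^{\oplus r}$, and the fiberwise zero set of $s_p$ as a section of $L^p\otimes E_0$ is precisely the common zero set of its components, so $[s_p=0]=[s_{p,1}=\ldots=s_{p,r}=0]$ as currents. Now I apply Theorem \ref{T:zeros} with $(E,h^E)=(E_0,h^{E_0}_H)$: this yields, for a generic sequence $\{s_p\}$ with respect to the Fubini-Study probability $\Upsilon_p$ on $\P V_p$, the convergence $p^{-r}[s_p=0]\to c_1(L,h^L)^r$ at speed $O(p^{-1}\log p)$. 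By Remark \ref{Rk:6.2}, the exceptional sets transfer under the projection $\pi:V_p\setminus\{0\}\to\P V_p$ so that the same estimate holds for the Gaussian measure $\mathcal{G}_p$ on $V_p$ induced by $h^L$ and $h^{E_0}_H$. This establishes (i).

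For part (ii), I apply Corollary \ref{cor:1.5} with $E=E_0$ and Hermitian metric $h^{E_0}_H$, under the polarization assumption $\omega=c_1(L,h^L)$. The Gaussian probability measure $\nu^p_k$ on $V_p^k$ appearing in the corollary coincides by construction with $\mathcal{G}_p^k$, so \eqref{eq:cor1.5-2} gives
\begin{equation*}
\frac{1}{p^{r+1-k}}\,\E_k(\mathcal{G}_p^k)=
\binom{r}{k-1}\omega^{r+1-k}
+\frac{1}{p}\binom{r-1}{r-k}c_1(E_0,h^{E_0}_H)\wedge\omega^{r-k}
+O\!\left(\frac{1}{p^2}\right).
\end{equation*}
Substituting the formula $c_1(E_0,h^{E_0}_H)=-\frac{i}{2\pi}\partial\bar{\partial}\log\det H$ derived before the statement yields the asserted expansion \eqref{eq:7.2-2}. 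The final observation is then immediate: if $\det H$ is constant on $X$, then $\partial\bar{\partial}\log\det H=0$, so the $p^{-1}$ term vanishes identically.

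There is no genuine obstacle here — the proposition is a specialization of the general theory already developed. The only care needed is bookkeeping: verifying that the zero set of a section of $L^p\otimes E_0$ is the simultaneous zero set of its $r$ components (so that the current $[s_p=0]$ in Theorem \ref{T:zeros} matches the simultaneous-zero current of Shiffman-Zelditch), and checking via Remark \ref{Rk:6.2} that the exceptional set estimates carry over from the Fubini-Study ensemble on $\P V_p$ to the Gaussian ensemble on $V_p$.
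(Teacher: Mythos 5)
Your proposal is correct and follows essentially the same route as the paper, which likewise obtains the proposition by specializing Theorem \ref{T:zeros} (with Remark \ref{Rk:6.2} for the Gaussian ensemble) and Corollary \ref{cor:1.5} to $(E_0,h^{E_0}_H)$ and inserting the formula $c_1(E_0,h^{E_0}_H)=-\frac{i}{2\pi}\partial\bar{\partial}\log\det H$. Your bookkeeping points (identifying $[s_p=0]$ with the simultaneous zero current and matching $\nu^p_k$ with $\mathcal{G}_p^k$) are exactly the checks the paper relies on.
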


In particular, when $k=1$ and $\omega=c_1(L,h^L)$ we obtain the asymptotics 
for the expectation of the currents of integration along simultaneous zeros as $p\to\infty$:
\begin{equation} \label{eq:7.7}
\frac{1}{p^r}\,\E[s_p=0]=\frac{1}{p^r}\,\E[s_{p,1}=\ldots=s_{p,r}=0]
=\omega^r-\frac{i}{2\pi p}\,\partial\bar{\partial}\log \det H 
\wedge \omega^{r-1}+O\left(\frac{1}{p^2}\right),
\end{equation}
where $s_p=(s_{p,1},\ldots, s_{p,r})\in V_p$.

If we assume that the function $H$ is the constant identity matrix $H \equiv\mathrm{Id}_r$ 
then the random sections $s_{p,1}, \ldots, s_{p,r}$ are mutually independent identically 
distributed Gaussian variables. Consequently, Proposition \ref{P:r-tuple} $(i)$ 
and \eqref{eq:7.7} precisely replicate the results obtained by Shiffman-Zelditch 
\cite[Propositions 4.4 and 4.5]{ShZ99} and Dinh-Sibony \cite[Section 7]{DS06}.
It is noteworthy that Dinh-Sibony \cite[Section 7]{DS06} also considered a broader range of 
probability measures on sections beyond Gaussian or Fubini-Study measures.

Letting $k=r$ in \eqref{eq:7.2-2} we obtain 
\begin{equation}
\label{eq:7.2-3}
\frac{1}{p}\,\E[D_r(s^1_p,\ldots, s^r_p)]=
r\omega-\frac{i}{2\pi p}\,\partial\bar{\partial}\log \det H+
O\left(\frac{1}{p^2}\right),\quad p\to\infty.
\end{equation}
Let $\det(s^1_p,\ldots, s^r_p)$ be the determinant of 
$(s^1_p,\ldots, s^r_p)$, which is a holomorphic section of 
$L^{pr}\otimes \Lambda^r E_0$ (see Section \ref{S:tuple}). 
Note that $\Lambda^r E_0=X\times\C$ is the trivial line bundle on $X$. 
If we equip it with the Hermitian metric $h_H$ such that 
$|1|^2_{h_H}(x):=\det H(x)$, then we can rewrite \eqref{eq:7.2-3} as
\begin{equation}
\label{eq:7.2-4}
\frac{1}{p}\,\E[D_r(s^1_p,\ldots, s^r_p)]=
c_1(L^{r}\otimes \Lambda^r E_0, h^{L^r}\otimes h^{1/p}_H)+O\left(\frac{1}{p^2}\right).
\end{equation}

If we take $H\equiv\mathrm{Id}_r$ then we are in the setting of Section \ref{S:tuple}, that is, $s^1_p,\ldots, s^r_p$ are independent and identically distributed Gaussian variables. By \eqref{eq:5.17new} and under the assumption that $\omega=c_1(L,h^L)$, we get a special case of \eqref{eq:7.2-4},
\begin{equation}
\label{eq:7.2-5}
\begin{split}
    \frac{1}{p}\,\E[D_r(s^1_p,\ldots, s^r_p)]_{H=\mathrm{Id}_r}=\frac{r}{p}\,\Phi_{L^p}^\star(\omega_{\mathrm{FS}})=c_1(L^{r}, h^{L^r})+O\left(\frac{1}{p^2}\right),
\end{split}
\end{equation}
where the last identity is deduced from Tian's original theorem for positive line bundles.

\medskip

We conclude the paper by considering constant metrics on $E_0$. 
In this instance, we compute the covariance between the 
components of a random vector $s_p = (s_{p,1}, \ldots, s_{p,r}) \in V_p$.

\begin{Proposition}\label{prop:7.1}
Given $A=(a_{j\ell})_{1\leq j,\ell\leq r}\in M_+(\C^r)$ we 
equip $E_0$ with the Hermitian metric  $h^{E_0}_A(u,v):=h^{E_0}_0(A^{-1}u,v)$. 
Let $\mathcal{G}_p$ denote the Gaussian probability measure on 
$V_p$ induced by $h^L$ and $h^{E_0}_A$. Let 
$s_p=(s_{p,1},\ldots, s_{p,r})$ be a random element in $V_p\,$,
where each $s_{p,j}=\Pi^p_j(s_p)$ is a random variable valued in 
$H^0(X,L^p)$. Then, for all sufficiently large $p\in\N$, we have:

(i) Each $s_{p,j}$ is a Gaussian random variable valued in $H^0(X,L^p)$ with the 
same distribution as $\sqrt{a_{jj}}s_{p,0}\,$ (note that $a_{jj}>0$). Here $s_{p,0}$
denotes the random element in $H^0(X,L^p)$ with the Gaussian probability 
measure induced by the $L^2$-inner product associated with $h^L$.

(ii) The covariance matrix is given by
\begin{equation}
    \E[(s_{p,j},s_{p,\ell})_p]=n_p a_{j\ell }, \text{ for } 1\leq j,\ell\leq r,
\end{equation}
where $n_p:=\dim H^0(X,L^p)$.
\end{Proposition}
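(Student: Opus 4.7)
The plan is to reduce both statements to a direct linear algebra computation after unpacking the definition of the inner product on $V_p$ induced by $h^L$ and $h^{E_0}_A$. Since $A$ is a constant matrix, the isomorphism $V_p \cong H^0(X,L^p)^{\oplus r}$ transports the $L^2$ inner product to the explicit form
\[
(S,S')_{V_p}=\sum_{j,k=1}^{r}(A^{-1})_{kj}\,(\Pi^p_j S,\Pi^p_k S')_p,
\]
where $(\cdot,\cdot)_p$ denotes the $L^2$ inner product on $H^0(X,L^p)$ induced by $h^L$. The direct sum decomposition is generally not orthogonal for this inner product unless $A$ is diagonal.

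First, I would fix an $L^2$-orthonormal basis $\{e_i^p\}_{i=1}^{n_p}$ of $H^0(X,L^p)$ and write the tensor basis $S_{ij}:=e_i^p\otimes v_j$ of $V_p$, so that every $s_p\in V_p$ expands uniquely as $s_p=\sum_{i,j}c_{ij}\,S_{ij}$ with $s_{p,j}=\sum_i c_{ij}\,e_i^p$. In these coordinates the quadratic form $(s_p,s_p)_{V_p}=\sum_{i,a,b}(A^{-1})_{ba}\,c_{ia}\overline{c_{ib}}$ is block diagonal across the index $i$, with each block equal to the Hermitian form defined by $A^{-1}$ on $\mathbb{C}^r$. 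Since the density of the Gaussian measure $\mathcal{G}_p$ is proportional to $\exp(-(s_p,s_p)_{V_p})$, the covariance matrix $\Sigma$ of the random vector $(c_{ij})\in\mathbb{C}^{n_p r}$ is the inverse of this block diagonal form, yielding
\[
\mathbb{E}[c_{ij}\,\overline{c_{i'k}}]=\delta_{ii'}\,a_{jk},\qquad 1\leq i,i'\leq n_p,\;\;1\leq j,k\leq r.
\]

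Part (i) follows by restricting to a fixed $j$: the family $(c_{ij})_{i=1}^{n_p}$ consists of i.i.d.\ centered complex Gaussians of variance $a_{jj}>0$, so $(c_{ij})_i$ has the same distribution as $\sqrt{a_{jj}}\,(\zeta_i)_i$ for $\zeta_i$ i.i.d.\ standard complex Gaussians. Since $s_{p,0}=\sum_i\zeta_i\,e_i^p$ precisely describes the Gaussian on $H^0(X,L^p)$ associated with the $L^2$ inner product, we conclude $s_{p,j}\stackrel{d}{=}\sqrt{a_{jj}}\,s_{p,0}$. Part (ii) follows from orthonormality of $\{e_i^p\}$: $(s_{p,j},s_{p,\ell})_p=\sum_i c_{ij}\overline{c_{i\ell}}$, and taking expectations with the covariance formula gives $\mathbb{E}[(s_{p,j},s_{p,\ell})_p]=\sum_{i=1}^{n_p}a_{j\ell}=n_p\,a_{j\ell}$.

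The only delicate point is bookkeeping of the Hermitian conventions, in particular the passage from the Gram matrix of $\{S_{ij}\}$ (which has the transpose of $A^{-1}$ in its blocks, depending on which slot is conjugate-linear) to the covariance matrix of the Gaussian (which is the inverse of that Gram matrix); a clean way to organize this is to write the form as $I_{n_p}\otimes A^{-1}$ acting on $\mathbb{C}^{n_p}\otimes\mathbb{C}^r\cong\mathbb{C}^{n_p r}$ and take inverses block-wise to obtain $\Sigma=I_{n_p}\otimes A$. No analytic input beyond the $L^2$ structure is needed, so both claims are purely algebraic once the inner product is made explicit.
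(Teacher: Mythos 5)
Your proof is correct and amounts to essentially the same finite-dimensional Gaussian computation as the paper's: the key point in both is that the Gram matrix of the $L^2(h^{E_0}_A)$-inner product in the product basis $e^p_i\otimes v_j$ is $I_{n_p}\otimes A^{-1}$, hence the coefficient covariance is $I_{n_p}\otimes A$, giving $\E[c_{ij}\overline{c_{i'k}}]=\delta_{ii'}a_{jk}$ and thus (i) and (ii). The paper packages this via the operator $A_p$ and the family $\{A_pE^p_je_{p,k}\}$ (its identity \eqref{eq:7.4}) instead of writing the Gaussian density and inverting the block form, but that is only a cosmetic difference, and your handling of the conjugation conventions is consistent with the paper's statement.
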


\begin{proof}
Let $(\cdot,\cdot)_{p,L^2(h^{E_0}_0)}$, $(\cdot,\cdot)_{p,L^2(h^{E_0}_A)}$ 
denote the $L^2$-inner products on $V_p$ induced by $h^{E_0}_0$ and 
$h^{E_0}_A$ respectively. Let $A_p\in\mathrm{End}(V_p)$ be defined by 
\[A_p s_p=\left(\sum_{j=1}^r a_{1j}s_{p,j}, \ldots, 
\sum_{j=1}^r a_{rj}s_{p,j}\right)\]
with respect to the splitting \eqref{eq:7.1}. Then for $s_p,s'_p\in V_p$, we have
\begin{equation}
    (s_p,s'_p)_{p,L^2(h^{E_0}_0)}=(A_p s_p,s'_p)_{p,L^2(h^{E_0}_A)}.
    \label{eq:7.3A}
\end{equation}
For $j\in\{1,\ldots,r\}$, let $E^p_j: H^0(X,L^p)\rightarrow V_p$ 
be the linear embedding that gives the $j$-th component in the 
splitting \eqref{eq:7.1}. Let $\{e_{p,k}\}_{k=1}^{n_p}$ 
be an orthonormal basis of $H^0(X,L^p)$ with respect to the 
$L^2$-inner product $(\cdot,\cdot)_p$ induced by $h^L$. Then we have
\[ s_{p,0}=\sum_{k=1}^{n_p}\eta_k e_{p,k},\]
where $\{\eta_k\}_k$ is a vector of independent and identically 
distributed standard complex Gaussian variables (with complex variance one).

Let $s_p$ be the random element in $V_p$ equipped with the 
Gaussian probability measure $\mathcal{G}_p$ (induced by $h^{E_0}_A$). 
We proceed to prove (i) and (ii).
Since $s_{p,j}=\Pi^p_j(s_p)$ is a linear transformation of a Gaussian vector, 
$s_{p,j}$ is a Gaussian random variable valued in $H^0(X,L^p)$. 
That is, for any $v\in H^0(X,L^p)$, $(s_{p,j},v)_p$ is a complex Gaussian variable.
To determine the distribution of $s_{p,j}$, it suffices to study 
the Gaussian variables $(s_{p,j},e_{p,k})_p$ for $k=1,\ldots, n_p$.
By the definitions of $h^{E_0}_A$ and $A_p$, we have the following identity
\begin{equation}
(A_pE^p_j e_{p,k'},A_p E^p_{j'} e_{p,k})_{p, L^2(h^{E_0}_A)}=
\delta_{k'k}a_{j'j}\,.
\label{eq:7.4}
\end{equation}
Using \eqref{eq:7.3A} and the fact that $A$ is Hermitian we obtain
\begin{equation*}
    \begin{split}
        (s_{p,j},e_{p,k})_p&=(\Pi^p_j(s_p),e_{p,k})_p=(s_{p},E^p_j e_{p,k})_{p, L^2(h^{E_0}_0)}\\
        &=(A_ps_{p},E^p_j e_{p,k})_{p, L^2(h^{E_0}_A)}=(s_{p},A_pE^p_j e_{p,k})_{p, L^2(h^{E_0}_A)}.
    \end{split}
\end{equation*}
Therefore,
\begin{equation}\label{eq:7.6}
s_{p,j}=\sum_{k=1}^{n_p}
(s_{p},A_pE^p_j e_{p,k})_{p, L^2(h^{E_0}_A)} e_{p,k}\,.
\end{equation}
By \eqref{eq:7.4}, for a fixed $j \in \{1, \ldots, r\}$, the set
$\left\{ \frac{1}{\sqrt{a_{jj}}}A_pE^p_j e_{p,k}\right\}_{k=1}^{n_p}$
forms an orthonormal set in $V_p$ with respect to the inner product
$(\cdot,\cdot)_{p,L^2(h^{E_0}_A)}$. 
By the fact that $s_p$ is Gaussian with respect to this inner product, 
we obtain that 
 \[ \left\{ \frac{1}{\sqrt{a_{jj}}}(s_{p},A_pE^p_j 
 e_{p,k})_{p, L^2(h^{E_0}_A)}\right\}_{k=1}^{n_p}\]
is a vector of independent and identically 
distributed standard complex Gaussian variables, 
see \cite[Lemma 2.4.2 and Theorem 2.4.3]{Stroock}. 
Thus (i) follows from the definition of $s_{p,0}$ and 
\eqref{eq:7.6}. Moreover, we have
\[ (s_{p,j},s_{p,\ell})_p=\sum_{k=1}^{n_p} 
(s_{p},A_pE^p_j e_{p,k})_{p, L^2(h^{E_0}_A)} 
\overline{(s_{p},A_pE^p_\ell e_{p,k})}_{p, L^2(h^{E_0}_A)}.\]
Consequently, \eqref{eq:7.4} and the fact that $s_p$ is Gaussian directly imply (ii).
\end{proof}

\end{document}